\DeclareMathAlphabet{\pazocal}{OMS}{zplm}{m}{n}
\numberwithin{equation}{section}
\newtheorem{thm}{Theorem}[section]
\newtheorem{lem}[thm]{Lemma}
\newtheorem{prop}[thm]{Proposition}
\newtheorem{cor}[thm]{Corollary}
\theoremstyle{definition}
\newtheorem{defn}[thm]{Definition}
\theoremstyle{remark}
\theoremstyle{remark}
\newtheorem{rem}[thm]{Remark}
\title{
Local times for continuous paths of arbitrary regularity
}
\author{  
\textsc{Donghan Kim} \thanks{ 
Department of Mathematics, Columbia University, New York, NY 10027 (E-mail: {\it dk2571@columbia.edu}).
}
}
\begin{document}

\maketitle

\bigskip

\begin{abstract}
\noindent
We study a continuous pathwise local time of order $p$ for continuous functions with finite $p$-th variation along a sequence of time partitions, for even integers $p \geq 2$. With this notion, we establish a Tanaka-type change of variable formula, as well as Tanaka-Meyer formulae. We also derive some identities involving this high-order pathwise local time, each of which generalizes a corresponding identity from semimartingale theory. We then use collision local times between multiple functions of arbitrary regularity, to study the dynamics of ranked continuous functions of arbitrary regularity. We present also another definition of pathwise local time which is more natural for fractional Brownian Motions, and give a connection with the previous notion of local time.
\end{abstract}

\smallskip
\noindent{\it Keywords and Phrases:} Pathwise It\^o calculus, Pathwise local time, Pathwise Tanaka-Meyer formulae, Ranked functions of arbitrary regularity, Fractional Brownian motion.

\smallskip

\input amssym.def
\input amssym

\smallskip

\section{Introduction}
\label{sec: 1}

Hans F\"ollmer showed almost 40 years ago that It\^o's change of variable formula can be established in a pathwise manner, devoid of any probabilistic structure. This result gave rise to the development of pathwise approaches to stochastic calculus. \cite{F1981} first introduced a notion of pathwise quadratic variation $[S]$ for a given continuous function $S$, along a fixed sequence of partitions of a given time-interval. For integrands of the form $f'(S)$ with $f \in C^2(\mathbb{R})$, he defined a pathwise integral $\int f'(S_u)du$ as a limit of Riemann sums, and derived It\^o's change of variable formula in terms of this integral. Then, F\"ollmer's student \cite{Wuermli} introduced a corresponding concept of pathwise local time and showed that the It\^o-Tanaka formula involving this local time holds in a pathwise sense for less regular functions $f$. The notion of pathwise local time has been further studied recently, most notably, by \cite{PerkowskiPromel2} and \cite{Davis2018}.

\smallskip

An important generalization of the pathwise approach to It\^o calculus was developed by \cite{Cont_Perkowski}. These authors extended F\"ollmer's It\^o formula in the same pathwise setting to rougher functions admitting $p$-th variation with an even integer $p \geq 2$, instead of quadratic variation ($p=2$). This result is applicable in particular to paths of fractional Brownian motions with arbitrary Hurst indices. They also defined appropriate orders of pathwise local time corresponding to these rougher functions, and derived a version of the Tanaka-type change of variable formula.

\smallskip

In this paper, we develop further the properties of pathwise local time for functions with arbitrary regularity. Here, ``arbitrary regularity'' means that the functions admit finite $p$-th variation along a given sequence of time partitions, for any even integer $p \geq 2$. We first introduce a concept of \textit{continuous} pathwise local time of order $p$, and show that this new notion of local time enables an It\^o-Tanaka formula to hold for less regular functions than before, namely $f \in C^{(p-2)}(\mathbb{R})$.

\smallskip

From the It\^o-Tanaka formula for $C^{(p-2)}(\mathbb{R})$ functions, we can derive $p$-th order Tanaka-Meyer formulae as corollaries. These formulae in turn yield a lot of identities involving pathwise local times of functions with finite $p$-th variation, and these identities are higher-order pathwise generalizations of those in semimartingale theory. Using these equations as building blocks, we derive expressions for the ranked (in descending order) functions amongst $m$ given functions admitting finite $p$-th order variation, in terms of the original ones and of appropriate local times. More specifically, we represent the F\"ollmer integral with respect to the $k$-th ranked function $X_{(k)}$ among $m$ continuous functions $X_1, \cdots, X_m$, as the sum of same integrals of original functions, collision local time terms generated whenever these functions collide, and some cross-terms inevitably produced due to the roughness of the $X_i$'s. This representation helps us to understand the dynamics of ranked particles, whose motions fluctuate over time more irregularly than standard Brownian motions or semimartingales.

\smallskip

Finally, we provide another definition of pathwise local time, which seems more natural to the generic paths of fractional Brownian Motion, and compare it with the previous notion of local time. With this new definition, we also present a slightly generalized construction of the F\"ollmer integral and establish the corresponding It\^o-Tanaka formula for $f \in C^{(p-2)}(\mathbb{R})$.

\smallskip

\textbf{Outline :} Section~\ref{sec: 2} provides a review of pathwise It\^o and Tanaka formulae, the definition of pathwise local time of order $p$, as well as necessary basic settings behind these concepts, in the context of \cite{Cont_Perkowski}. Then, we provide a new version of pathwise local time which is used throughout this paper. Section~\ref{sec: 3} develops It\^o-Tanaka and Tanaka-Meyer formulae corresponding to this notion of local time. Section~\ref{sec: 4} presents several identities involving pathwise local times. In Section~\ref{sec: 5}, we study ranked functions with finite $p$-th variation, using pathwise collision local times. Finally, in Section~\ref{sec: 6}, we give another notion of pathwise local time which fits more naturally to the paths of fractional Brownian Motion, and discuss a connection between this local time and the one previously defined in Section~\ref{sec: 2}.

\bigskip

\bigskip

\section{Definitions of local times and It\^o-Tanaka formulae}
 \label{sec: 2}

For a fixed real number $T>0$, we define and fix a nested sequence of partitions $\pi_n = \{t_0^n, \cdots, t^n_{N(\pi_n)} \}$ with $ 0 = t_0^n < \cdots < t_k^n < \cdots < t^n_{N(\pi_n)} = T$, for each $n \in \mathbb{N}$. We consider a continuous function $S : [0, T] \rightarrow \mathbb{R}$ which we denote by $S \in C([0, T], \mathbb{R})$, and define the oscillation of the function $S$ along the partition $\pi_n$ as
\begin{equation}
	osc(S, \pi_n) := \max_{[t_j, t_{j+1}] \in \pi_n} \max_{r, s \in [t_j, t_{j+1}]} |S(s) - S(r)|.
\end{equation}

\medskip

Here and below, the notation $[t_j, t_{j+1}] \in \pi_n$ means that $t_j$ and $t_{j+1}$ are consecutive elements of $\pi_n$, i.e., $t_j < t_{j+1}$, $\pi_n \cap (t_j, t_{j+1}) = \emptyset$.

\medskip

\begin{defn} [Variation of order $p$ along a sequence of partitions]
	\label{def: p-th variation}
	For a given real number $p>0$, a continuous function $S \in C([0, T], \mathbb{R})$ is said to have $p$-th variation along a given, nested sequence of partitions $\pi = (\pi_n)_{n \in \mathbb{N}}$, if
	$\lim_{n \rightarrow \infty} osc(S, \pi_n) = 0$, and the sequence of measures
	\begin{equation}		\label{def : p-th}
		\mu^n := \sum_{[t_j, t_{j+1}] \in \pi_n} \big|S(t_{j+1}) - S(t_j)\big|^p \cdot \delta_{t_j}, \qquad n \in \mathbb{N}
	\end{equation}
	converges vaguely to a $\sigma$-finite measure $\mu$ on $\mathcal{B}(\mathbb{R})$ without atoms. Here $\delta_t$ denotes the Dirac measure at $t \in [0, T]$. We write $V_p(\pi)$ for the collection of all continuous functions having finite $p$-th variation along the sequence of partitions $\pi = (\pi_n)_{n \in \mathbb{N}}$. We call the continuous function $[0, T] \ni t \mapsto [S]^p(t) :=\mu([0, t]) \in [0, \infty)$, the $p$-th variation of $S$, and the number $\mu([0, t])$ the $p$-th variation of $S$ on the interval $[0, t]$.
\end{defn}

\medskip

In this definition, we require $osc(S, \pi_n) \rightarrow 0$ as $n \rightarrow \infty$ instead of the mesh size of $(\pi_n)_{n \in \mathbb{N}}$ going to zero, because we will work with Lebesgue partitions generated by $S$. We also note that $V_p(\pi)$ depends in general on the specific sequence of partitions $\pi=(\pi_n)_{n \in \mathbb{N}}$; the $p$-th variations along two different sequences of partitions of the same function $S$ can be different, even when both exist. Further discussion regarding this definition of $p$-th variation along a sequence of partitions for $p=2$, can be found in \cite{chiu2018}.

\smallskip

For typical examples of path having $p$-th variation, a fractional Brownian Motion $B^{(H)}$ with Hurst index $H \in (0, 1)$, is known to have variation of order $1/H$; see Section~\ref{sec: 6.1} below for further discussion. Also, for a solution $u(t, x)$ of the stochastic heat equation with white noise on $[0, T] \times \mathbb{R}$, \cite{swanson2007} proved that the function $F(\cdot) := u(\cdot, x)$ for any $x \in \mathbb{R}$, belongs to $V_4(\pi)$ for any sequence of partitions $\pi = (\pi_n)_{n \in \mathbb{N}}$ of $[0, T]$ whenever the mesh size of $\pi_n$ goes to zero as $n \rightarrow \infty$.

\smallskip

The following result (Lemma~1.3 of \cite{Cont_Perkowski}) states that the vague convergence of the sequence of measures $(\mu_n)_{n \in \mathbb{N}}$ on $\mathcal{B}([0, T])$, as in \eqref{def : p-th}, is equivalent to the pointwise convergence of their ``cumulative distribution functions'' at all continuity points of the limiting cumulative distribution function. If this limiting distribution function is continuous, the convergence is uniform.

\medskip
 
\begin{lem}
	\label{Lem : V}
	Let $S$ be a function in $C([0, T], \mathbb{R})$. The function $S$ belongs to $V_p(\pi)$ if, and only if, there exists a continuous, non-decreasing function $[S]^p : [0, T] \rightarrow [0, \infty)$ such that
	\begin{equation} \label{Lem: 1}
		\sum_{\substack{[t_j, t_{j+1}] \in \pi_n \\ t_j \leq t}} | S(t_{j+1}) - S(t_j)|^p \xrightarrow{n \rightarrow \infty} [S]^p(t)
	\end{equation}
	holds for every $t \in [0, T]$. If this is the case, the convergence in \eqref{Lem: 1} is uniform.
\end{lem}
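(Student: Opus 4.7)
The plan is to reduce the lemma to the classical duality between locally finite Borel measures on $[0,T]$ and their distribution functions $F_\mu(t) := \mu([0,t])$: vague convergence $\mu^n \to \mu$ is equivalent to pointwise convergence $F_{\mu^n}(t) \to F_\mu(t)$ at every continuity point of $F_\mu$. The partial sums in \eqref{Lem: 1} are precisely $F_{\mu^n}(t) = \mu^n([0,t])$, and the non-atomicity condition in Definition~\ref{def: p-th variation} is exactly continuity of $F_\mu$ on all of $[0,T]$; with this identification the lemma becomes essentially a restatement of the duality, and both implications need to be supplied.

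For the forward direction, I would assume $S \in V_p(\pi)$ and set $[S]^p(t) := \mu([0,t])$, which is non-decreasing and, because $\mu$ has no atoms, continuous. To upgrade vague convergence to pointwise convergence of $F_{\mu^n}$ at an arbitrary $t \in [0,T]$, I would fix $\varepsilon > 0$ and use non-atomicity of $\mu$ to choose continuous functions $f_\varepsilon^-, f_\varepsilon^+ \in C_c(\mathbb{R})$ with $f_\varepsilon^- \leq \mathbf{1}_{[0,t]} \leq f_\varepsilon^+$ and $\int (f_\varepsilon^+ - f_\varepsilon^-)\,d\mu < \varepsilon$ (trapezoidal bumps with sufficiently steep slopes at $0$ and $t$ to squeeze out at most $\varepsilon/2$ of $\mu$-mass on each side). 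Vague convergence applied to $f_\varepsilon^\pm$ then sandwiches $F_{\mu^n}(t)$ between quantities converging to within $\varepsilon$ of $F_\mu(t)$. For the converse, given the continuous non-decreasing $[S]^p$, I would let $\mu$ be the Lebesgue--Stieltjes measure it induces (non-atomic by continuity of $[S]^p$), and for each $f \in C_c(\mathbb{R})$ approximate $f$ uniformly by step functions on a fine grid; a summation-by-parts representation of $\int f\,d\mu^n$ and $\int f\,d\mu$ against $F_{\mu^n}$ and $F_\mu$, together with the hypothesis of pointwise CDF convergence at the grid points, then yields $\int f\,d\mu^n \to \int f\,d\mu$.

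The uniform-convergence claim follows from a Dini-type argument on $[0,T]$. The functions $F_{\mu^n}$ are non-decreasing, and their pointwise limit $[S]^p$ is continuous on the compact interval $[0,T]$; uniform continuity of $[S]^p$ supplies, for any $\varepsilon > 0$, a finite grid $0 = s_0 < s_1 < \cdots < s_k = T$ on which $[S]^p$ oscillates by less than $\varepsilon$. Monotonicity of $F_{\mu^n}$ lets me bound $|F_{\mu^n}(t) - [S]^p(t)|$ at any $t$ in terms of the values at the two adjacent grid points plus $\varepsilon$, and pointwise convergence at the finitely many $s_i$ then yields uniformity. The most delicate point of the whole proof is the first sandwich step: it is precisely the non-atomicity of $\mu$ that promotes vague convergence from $C_c$ test functions to the discontinuous indicator $\mathbf{1}_{[0,t]}$ at \emph{every} $t \in [0,T]$, rather than merely at $\mu$-continuity points.
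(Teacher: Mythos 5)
The paper does not prove Lemma~\ref{Lem : V}; it is quoted directly as Lemma~1.3 of \cite{Cont_Perkowski}, so there is no internal proof to compare your argument against. Taken on its own terms, your sketch is the standard duality argument and is essentially sound: identify the partial sums with the distribution functions $F_{\mu^n}(t)=\mu^n([0,t])$, upgrade vague convergence to pointwise convergence of $F_{\mu^n}$ at \emph{every} $t$ by sandwiching $\mathbbm{1}_{[0,t]}$ between continuous bumps whose $\mu$-gap is small (non-atomicity of $\mu$ supplies the smallness), reverse via step-function approximation of $C_c$ test functions against the CDFs, and obtain uniformity from a P\'olya/Dini-type grid argument exploiting monotonicity of the $F_{\mu^n}$ together with uniform continuity of the limit on the compact $[0,T]$.

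Two points worth tightening. First, your closing sentence, that non-atomicity promotes convergence to $\mathbbm{1}_{[0,t]}$ ``rather than merely at $\mu$-continuity points,'' is slightly circular: the mechanism is exactly that non-atomicity makes every $t$ a continuity point of $F_\mu$, so you are still only invoking the classical portmanteau criterion, just with the boundary condition automatically satisfied everywhere. Second, and more substantively, Definition~\ref{def: p-th variation} requires $osc(S,\pi_n)\to 0$ \emph{in addition to} the vague convergence of $\mu^n$; your converse direction produces the vague convergence from the pointwise CDF convergence but never touches the oscillation condition. One can construct degenerate sequences $\pi$ for which the partial sums converge to a constant (hence continuous) limit while $osc(S,\pi_n)\not\to 0$, so the ``if'' implication as you have argued it does not by itself deliver membership in $V_p(\pi)$. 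In \cite{Cont_Perkowski} the oscillation hypothesis is effectively part of the standing setup; you should carry it explicitly as a hypothesis on both sides of the equivalence rather than let it lapse.
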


\medskip

In the following, we denote the space of functions $f : \mathbb{R} \rightarrow \mathbb{R}$ which are $k$-times differentiable with continuous $k$-th derivative, by $C^k(\mathbb{R}, \mathbb{R})$. F{\"o}llmer's pathwise It{\^o} formula (\cite{F1981}) for a class of real-valued $C^2(\mathbb{R}, \mathbb{R})$ functions of $S \in V_2(\pi)$ can be generalized to any even integer $p$. This was done in Theorem~1.5 of \cite{Cont_Perkowski}, as follows.

\medskip

\begin{thm} [Change of variable formula for paths of finite $p$-th variation]
	\label{Thm : Ito formula}
	Fix a nested sequence of partitions $\pi = (\pi_n)_{n \in \mathbb{N}}$, an even integer $p \in \mathbb{N}$, and a continuous function $S \in V_p(\pi)$. Then for every function $f : \mathbb{R} \rightarrow \mathbb{R}$ in $C^p(\mathbb{R}, \mathbb{R})$, the pathwise change of variable formula
	\begin{equation} \label{Eq : Ito formula}
		f\big(S(t)\big) - f\big(S(0)\big) = \int_0^t f'\big(S(u)\big)dS(u) + \frac{1}{p!}\int_0^t f^{(p)}\big(S(u)\big)d[S]^p(u)
	\end{equation}
	holds for every $t \in [0, T]$. Here, the ``F{\"o}llmer integral of order $p$", namely
	\begin{equation} \label{Eq : compensated Riemann sum}
		\int_0^t f'\big(S(u)\big)dS(u) := \lim_{n \rightarrow \infty} \sum_{\substack{[t_j, t_{j+1}] \in \pi_n \\ t_j \leq t}} \sum_{k=1}^{p-1}\frac{f^{(k)}\big(S(t_j)\big)}{k!}\big(S(t_{j+1})-S(t_j)\big)^k,
	\end{equation}
	is defined as a pointwise limit of compensated Riemann sums.
\end{thm}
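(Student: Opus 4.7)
The plan is to prove \eqref{Eq : Ito formula} by applying Taylor's theorem with remainder on each subinterval and summing. For $f \in C^p(\mathbb{R}, \mathbb{R})$ and each $[t_j, t_{j+1}] \in \pi_n$, write
\[
f\big(S(t_{j+1})\big) - f\big(S(t_j)\big) = \sum_{k=1}^{p-1} \frac{f^{(k)}\big(S(t_j)\big)}{k!}\big(S(t_{j+1})-S(t_j)\big)^k + \frac{1}{p!} f^{(p)}(\xi_j)\big(S(t_{j+1})-S(t_j)\big)^p,
\]
for some $\xi_j$ on the segment joining $S(t_j)$ to $S(t_{j+1})$. Summing over $\{j : t_j \leq t\}$, the left-hand side telescopes to $f\big(S(\tau_n)\big) - f\big(S(0)\big)$ for a partition point $\tau_n \in \pi_n$ satisfying $|S(\tau_n) - S(t)| \leq osc(S, \pi_n)$ (both $\tau_n$ and $t$ lie in a common subinterval of $\pi_n$). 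Since $osc(S, \pi_n) \to 0$ and $f$ is continuous, this expression converges to $f(S(t)) - f(S(0))$ as $n \to \infty$.

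For the remainder, I would split
\[
f^{(p)}(\xi_j)\big(S(t_{j+1})-S(t_j)\big)^p = f^{(p)}\big(S(t_j)\big)\big(S(t_{j+1})-S(t_j)\big)^p + \big[f^{(p)}(\xi_j) - f^{(p)}\big(S(t_j)\big)\big]\big(S(t_{j+1})-S(t_j)\big)^p.
\]
The crucial point is that $p$ is even, so $(S(t_{j+1})-S(t_j))^p = |S(t_{j+1})-S(t_j)|^p \geq 0$. Summing the first piece over $\{j : t_j \leq t\}$ gives $\int_{[0, t]} f^{(p)}(S(u))\,d\mu^n(u)$; by Lemma~\ref{Lem : V} the measures $\mu^n$ converge vaguely to the atomless measure $d[S]^p$, and since $f^{(p)} \circ S$ is continuous and bounded on $[0, T]$ (and $\mu(\{0, t\}) = 0$), this integral tends to $\int_0^t f^{(p)}(S(u))\,d[S]^p(u)$. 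The second piece is bounded in absolute value by $\omega\big(osc(S, \pi_n)\big) \sum_{t_j \leq t} |S(t_{j+1})-S(t_j)|^p$, where $\omega$ denotes the modulus of continuity of $f^{(p)}$ restricted to the compact set $S([0,T])$; uniform continuity gives $\omega\big(osc(S, \pi_n)\big) \to 0$, while Lemma~\ref{Lem : V} keeps the partial $p$-th variation sum bounded, so this remainder contribution vanishes.

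Combining these pieces, the compensated Riemann sum in \eqref{Eq : compensated Riemann sum} equals the telescoped left-hand side minus the sum of $p$-th order terms, both of which have established limits; hence the limit in \eqref{Eq : compensated Riemann sum} exists and equals $f(S(t)) - f(S(0)) - \frac{1}{p!}\int_0^t f^{(p)}(S(u))\,d[S]^p(u)$, yielding simultaneously the well-definedness of the F\"ollmer integral and the identity \eqref{Eq : Ito formula}. The main obstacle, and the decisive place where the parity hypothesis enters, is identifying $(S(t_{j+1})-S(t_j))^p$ with the non-negative quantity $|S(t_{j+1})-S(t_j)|^p$: this is precisely what permits direct application of vague convergence against the positive measures $\mu^n$. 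For odd $p$ the signed contributions could not be dominated by the $p$-th variation measure, and a substantially more delicate treatment of the lower-order Taylor terms (shifting them across the vanishing $(p-1)$-st order sum) would be required.
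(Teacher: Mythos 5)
The paper does not reprove this theorem but cites it directly as Theorem~1.5 of \cite{Cont_Perkowski}; the subsequent Taylor-expansion manipulations in Section~\ref{sec: 2} serve the Tanaka formula (Theorem~\ref{Thm : tanaka formula}), not this statement. Your argument is correct and reproduces, in substance, the proof of the cited reference: Taylor with Lagrange remainder on each subinterval, telescoping on the left, splitting the $p$-th order term into a principal part handled by vague convergence of $\mu^n$ against the bounded continuous integrand $f^{(p)}\circ S$ (with $\mu$ atomless so boundary points of $[0,t]$ do not interfere), and an error part dominated by $\omega(osc(S,\pi_n))\sum|S(t_{j+1})-S(t_j)|^p$, which vanishes by uniform continuity of $f^{(p)}$ on the compact range $S([0,T])$ together with Lemma~\ref{Lem : V}. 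You also correctly isolate where the parity hypothesis is used, namely in identifying $(S(t_{j+1})-S(t_j))^p$ with $|S(t_{j+1})-S(t_j)|^p$ so that the remainder sum is a genuine integral against the nonnegative measure $\mu^n$.
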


\bigskip

Just as the classical It{\^o} formula can be extended to a generalized It{\^o} rule for convex functions (see Theorem~3.6.22 in \cite{KS1}), F{\"o}llmer's pathwise It{\^o} formula was similarly generalized for functions in suitable Sobolev spaces by his student W{\"u}rmli, in her unpublished diploma thesis (\cite{Wuermli}). With the appropriate definition of local times for paths of finite $p$-th variation, the change of variable formula in Theorem~\ref{Thm : Ito formula} can be also generalized in a similar manner, as follows.

Before presenting the relevant definition, we adopt the notation
\begin{equation*}
	\llparenthesis a, b \rrbracket = \begin{cases}
	(a, b], \qquad a \leq b,
	\\
	(b, a], \qquad b \leq a.
	\end{cases}
\end{equation*}

\medskip

\begin{defn} [$\mathbb{L}^q$-local time of order $p$]
	\label{Def : local time}
	Let $p \in \mathbb{N}$ be an even integer, and fix $q \in [1, \infty]$. A continuous function $S \in C([0, T], \mathbb{R})$ is said to have an $\mathbb{L}^q$-local time of order $p$ along the given sequence of partitions $\pi = (\pi_n)_{n \in \mathbb{N}}$, if $\lim_{n \rightarrow \infty} osc(S, \pi_n) = 0$ and the sequence of functions
	\begin{equation}	\label{def : local time}
		\mathbb{R} \ni x \mapsto L^{\pi_n; p}_t(x) := \sum_{\substack{[t_j, t_{j+1}] \in \pi_n \\ t_j \leq t}} \mathbbm{1}_{\llparenthesis S_{t_j}, S_{t_{j+1}} \rrbracket} (x) \big| S_{t_{j+1}} - x \big| ^{p-1}	\in [0, \infty), \qquad n \in \mathbb{N}
	\end{equation}
	converges weakly in $\mathbb{L}^q(\mathbb{R})$ to a limiting function $x \mapsto L_t^{p; q}(x)$ for each $t \in [0, T]$, such that the mapping $[0, T] \ni t \mapsto L_t^{p; q}(\cdot) \in \mathbb{L}^q(\mathbb{R})$ is weakly continuous.
	
	We call the mapping $[0, T] \times \mathbb{R} \ni (t, x) \mapsto L^{p; q}_t(x) \in [0, \infty)$ \textit{the order $p$ local time of $S$ along $\pi$, in $\mathbb{L}^q$}, and denote by $\pazocal{L}_p^q(\pi)$ the collection of all continuous functions $S : [0, T] \rightarrow \mathbb{R}$ having an $\mathbb{L}^q$-local time of order $p$ along $\pi$.
\end{defn}

\smallskip

The limit $L_t^{p; q}(x)$ provides a measure of the rate, at which the function $S$ accumulates variation of order $p$ at the given site $x \in \mathbb{R}$, over the time-interval $[0, t]$.

\medskip

We derive now the generalization of Wuermli's ``Tanaka formula" for continuous paths of finite $p$-th order variation, in the context of Definition~\ref{Def : local time} for local times. The following arguments are from \cite{Cont_Perkowski}, and are repeated for the convenience of the reader.

For an even integer $p$, we consider a function $f \in C^{p-2}(\mathbb{R}, \mathbb{R})$ with absolutely continuous derivative $f^{(p-2)}$, and apply the Taylor expansion of order $p-2$ with integral remainder, to obtain
\begin{equation} \label{Eq : Taylor}
	f(b) - f(a) = \sum_{k=1}^{p-2} \frac{f^{(k)}(a)}{k!} (b-a)^k + \int_a^b \frac{f^{(p-1)}(x)}{(p-2)!} (b-x)^{p-2} dx,
\end{equation}
where $f^{(p-1)}$ is a weak derivative of $f^{(p-2)}$. We assume that $f^{(p-1)}$ is right-continuous with left-limits (RCLL) and finite variation on compact intervals (since every function of finite variation has only countably many discontinuities, its RCLL version is also a weak derivative of $f^{(p-2)}$, and we work with this version). Applying the integration by parts formula for the Lebesgue-Stieltjes integral in the case $a \leq b$, we get
\begin{align*}
	\int_a^b \frac{f^{(p-1)}(x)}{(p-2)!}(b-x)^{p-2}dx 
	&= f^{(p-1)}(x)\frac{-(b-x)^{p-1}}{(p-1)!} \bigg|_{x=a}^b- \int_{(a, b]} \frac{-(b-x)^{p-1}}{(p-1)!}df^{(p-1)}(x)
	\\
	&=f^{(p-1)}(a)\frac{(b-a)^{p-1}}{(p-1)!} + \int_{(a, b]} \frac{(b-x)^{p-1}}{(p-1)!}df^{(p-1)}(x).
\end{align*}
Similarly, for $b < a$, we obtain
\begin{align*}
	\int_a^b \frac{f^{(p-1)}(x)}{(p-2)!}(b-x)^{p-2}dx 
	&= -\int_b^a \frac{f^{(p-1)}(x)}{(p-2)!}(b-x)^{p-2}dx 
	\\
	&=f^{(p-1)}(a)\frac{(b-a)^{p-1}}{(p-1)!} - \int_{(b, a]} \frac{(b-x)^{p-1}}{(p-1)!}df^{(p-1)}(x),
\end{align*}
as well as
\begin{align*}
	f(b) - f(a) - \sum_{k=1}^{p-1} \frac{f^{(k)}(a)}{k!} (b-a)^k 
	&= \text{sign}(b-a) \int_{\llparenthesis a, b \rrbracket} \frac{(b-x)^{p-1}}{(p-1)!}df^{(p-1)}(x)
	\\ 
	&= \text{sign}(b-a)^p \int_{\llparenthesis a, b \rrbracket} \frac{|b-x|^{p-1}}{(p-1)!}df^{(p-1)}(x)
	\\
	&= \int_{\mathbb{R}}\mathbbm{1}_{\llparenthesis a, b \rrbracket}(x) \frac{|b-x|^{p-1}}{(p-1)!}df^{(p-1)}(x),
\end{align*}
by combining with \eqref{Eq : Taylor}; in the last equality, it is crucial that $p$ be even.

\smallskip

Thus, using the telescoping summation
\begin{equation*}
	f(S_t)-f(S_0) = \sum_{\substack{[t_j, t_{j+1}] \in \pi_n \\ t_j \leq t}} \big( f(S_{t_{j+1}}) - f(S_{t_{j}}) \big)
\end{equation*}
for $S \in V_p(\pi)$ for the given sequence of partitions $\pi = (\pi_n)_{n \in \mathbb{N}}$ of $[0, T]$, the above equality becomes
\begin{align}
	f(S_t) - f(S_0) &- \sum_{\substack{[t_j, t_{j+1}] \in \pi_n \\ t_j \leq t}} \sum_{k=1}^{p-1} \frac{f^{(k)}(S_{t_j})}{k!} (S_{t_{j+1}} - S_{t_j })^k 		\nonumber
	\\
	&= \sum_{\substack{[t_j, t_{j+1}] \in \pi_n \\ t_j \leq t}} \int_{\mathbb{R}}\mathbbm{1}_{\llparenthesis S_{t_{j}}, S_{t_{j+1}} \rrbracket}(x) \frac{|S_{t_{j+1}}-x|^{p-1}}{(p-1)!}df^{(p-1)}(x) 		\nonumber
	\\
	& = \frac{1}{(p-1)!} \int_\mathbb{R}L_t^{\pi_n; p}(x)df^{(p-1)}(x),	\label{Eq : change of variable}
\end{align}
thanks to the definition \eqref{def : local time}.

In this manner, we arrive at the following result, Theorem~3.2 of \cite{Cont_Perkowski}.

\medskip

\begin{thm} [``It\^o-Tanaka" formula for paths of finite $p$-th order variation]
	\label{Thm : tanaka formula}
	Fix an even integer $p \in \mathbb{N}$, and a number $q \in [1, \infty]$ with conjugate exponent $q'=q/(q-1)$. Let $f \in C^{p-1}(\mathbb{R}, \mathbb{R})$ and assume that $f^{(p-1)}$ is weakly differentiable with derivative $f^{(p)}$ in $L^{q'}(\mathbb{R})$.
	
	Then, for any function $S \in \pazocal{L}_p^q(\pi)$, the pointwise limit of compensated Riemann sums
	\begin{equation}		\label{Eq : follmer integral}
		\int_0^t f'\big(S(u)\big)dS(u) := \lim_{n \rightarrow \infty} \sum_{\substack{[t_j, t_{j+1}] \in \pi_n \\ t_j \leq t}} \sum_{k=1}^{p-1}\frac{f^{(k)}\big(S(t_j)\big)}{k!}\big(S(t_{j+1})-S(t_j)\big)^k
	\end{equation}
	exists as in \eqref{Eq : compensated Riemann sum}, and the following change of variable formula
	\begin{equation} \label{Eq : tanaka formula}
		f\big(S(t)\big) - f\big(S(0)\big) = \int_0^t f'\big(S(u)\big)dS(u) + \frac{1}{(p-1)!}\int_{\mathbb{R}} f^{(p)}(x)L_t^{p; q}(x) dx
	\end{equation}
	holds, for $0 \leq t \leq T$.
\end{thm}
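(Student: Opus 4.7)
The main algebraic work has already been carried out in the derivation leading up to \eqref{Eq : change of variable}, which is a pre-limit identity valid for every $n \in \mathbb{N}$ as soon as $f^{(p-1)}$ is of locally bounded variation. Under the theorem's hypothesis that $f^{(p-1)}$ is weakly differentiable with derivative $f^{(p)} \in L^{q'}(\mathbb{R})$, H\"older's inequality applied on compact intervals yields $f^{(p)} \in L^1_{\mathrm{loc}}(\mathbb{R})$, so $f^{(p-1)}$ is locally absolutely continuous and in particular of locally finite variation. The plan is therefore to take \eqref{Eq : change of variable} as the starting point and simply pass to the limit $n \to \infty$ on both sides.

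On the right-hand side, the absolute continuity of $f^{(p-1)}$ gives $df^{(p-1)}(x) = f^{(p)}(x)\,dx$ as Lebesgue-Stieltjes measures, so
\begin{equation*}
\frac{1}{(p-1)!} \int_\mathbb{R} L_t^{\pi_n; p}(x)\, df^{(p-1)}(x) \;=\; \frac{1}{(p-1)!} \int_\mathbb{R} L_t^{\pi_n; p}(x)\, f^{(p)}(x)\,dx.
\end{equation*}
By the assumption $S \in \pazocal{L}_p^q(\pi)$, the discrete local times $L_t^{\pi_n;p}$ converge weakly in $L^q(\mathbb{R})$ to $L_t^{p;q}$ (weakly-$\star$ when $q = \infty$), and the hypothesis $f^{(p)} \in L^{q'}(\mathbb{R})$ provides a valid test function for this pairing. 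Hence the right-hand side converges to $\frac{1}{(p-1)!} \int_\mathbb{R} L_t^{p; q}(x) f^{(p)}(x)\,dx$.

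On the left-hand side of \eqref{Eq : change of variable}, the term $f(S_t) - f(S_0)$ is independent of $n$, so the convergence of the right-hand side forces the compensated Riemann sums
\begin{equation*}
\sum_{\substack{[t_j, t_{j+1}] \in \pi_n \\ t_j \leq t}} \sum_{k=1}^{p-1} \frac{f^{(k)}(S_{t_j})}{k!} \bigl(S_{t_{j+1}} - S_{t_j}\bigr)^k
\end{equation*}
to converge as well. Declaring their limit to be $\int_0^t f'(S(u))\,dS(u)$ as in \eqref{Eq : follmer integral} establishes existence of the F\"ollmer integral, and rearrangement of the limiting identity yields \eqref{Eq : tanaka formula}.

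The substantive content of the argument lies in the pre-limit manipulation \eqref{Eq : change of variable}, where the parity of $p$ is essential: only because $p$ is even does the sign factor $\mathrm{sign}(b-a)^p$ collapse to $1$, allowing the Taylor remainder to be rewritten with the absolute value $|b-x|^{p-1}$ that matches the discrete local time \eqref{def : local time}. Once that identity is available, what remains is essentially a weak-convergence exercise. The only mild subtlety is the endpoint case $q = \infty$, where "weak convergence in $L^\infty$" must be read in the weak-$\star$ sense against $L^1 = L^{q'}$, which is precisely what the hypothesis on $f^{(p)}$ supplies.
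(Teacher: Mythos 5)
Your proof is correct and takes essentially the same approach as the paper: start from the pre-limit identity \eqref{Eq : change of variable}, rewrite $df^{(p-1)}(x) = f^{(p)}(x)\,dx$ using absolute continuity, pass to the limit via the weak $\mathbb{L}^q$-$\mathbb{L}^{q'}$ pairing from Definition~\ref{Def : local time}, and observe that the Riemann sums must then converge since the other side does. Your explicit note that $f^{(p)} \in L^{q'}$ gives $f^{(p-1)}$ local absolute continuity (hence local bounded variation and the RCLL property needed for \eqref{Eq : change of variable}), and your clarification that $q=\infty$ requires the weak-$\star$ reading, are fine additions but do not change the route.
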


\bigskip

\begin{rem} [Occupation density formula]	\label{rem : occupation density formula}
	Comparing the last terms in \eqref{Eq : Ito formula} and \eqref{Eq : tanaka formula}, we obtain
	\begin{equation*}
		\int_0^t f^{(p)}\big(S(u)\big)d[S]^p(u) = p\int_{\mathbb{R}} f^{(p)}(x)L_t^{p; q}(x) dx,
	\end{equation*}
	when $f^{(p)}(\cdot)$ is continuous. Thus, we also have
	\begin{equation}	\label{Eq : occupation density formula}
		\int_0^t g\big(S(u)\big)d[S]^p(u) = p\int_{\mathbb{R}} g(x)L_t^{p; q}(x) dx,
	\end{equation}
	for any continuous function $g(\cdot)$. This so-called occupation density formula generalizes a very familiar property of semimartingale local time.
\end{rem}

\bigskip

The derivation of the equation \eqref{Eq : change of variable} only requires a function $f \in C^{p-2}(\mathbb{R}, \mathbb{R})$ with absolutely continuous $f^{(p-2)}$, and assumes that the weak derivative $f^{(p-1)}$ of $f^{(p-2)}$ be of bounded variation. Theorem~\ref{Thm : tanaka formula}, however, needs a little more regularity on the part of the function $f$, namely, $f \in C^{p-1}(\mathbb{R}, \mathbb{R})$ with weakly differentiable $f^{(p-1)}$. The reason for this extra regularity is to make the last term of \eqref{Eq : change of variable} converge to the last term of \eqref{Eq : tanaka formula}, as the partition is refined in the sense of Definition~\ref{Def : local time}.

\smallskip

However, if we impose stronger assumptions on the local times, we obtain the `Tanaka formula' under slightly weaker regularity conditions on the function $f$. We present this alternative version in Theorem~\ref{Thm : tanaka formula2} below, after developing the necessary prerequisites in Section~\ref{sec: 3}. We start with the following definition; the particular case of $p=2$ can be found in Definition~2.5 of \cite{PerkowskiPromel2}.

\medskip

\begin{defn} [Continuous local time of order $p$]
	\label{Def : local time2}
	Let $p \in \mathbb{N}$ be an even integer, and let $S$ be a continuous function defined on the finite time interval $[0, T]$ and in the collection $V_p(\pi)$ of Definition~\ref{def: p-th variation}. We say that $S$ has a continuous local time of order $p$ along the given sequence of partitions $\pi = (\pi_n)_{n\in \mathbb{N}}$, if $\lim_{n \rightarrow \infty} osc(S, \pi_n) = 0$, and if the `discrete local times'
	\begin{equation}	\label{def : local time2}
		\mathbb{R} \ni x \mapsto L^{\pi_n; p}_t(x) := \sum_{\substack{[t_j, t_{j+1}] \in \pi_n \\ t_j \leq t}} \mathbbm{1}_{\llparenthesis S_{t_j}, S_{t_{j+1}} \rrbracket} (x) \big| S_{t_{j+1}} - x \big| ^{p-1}	\in [0, \infty), \qquad n \in \mathbb{N}
	\end{equation}
	converge uniformly to a limit $x \mapsto L_t^{p, c}(x)$ as $n \rightarrow \infty$ for each $t \in [0, T]$, and the resulting mapping $(t, x) \mapsto L_t^{p, c}(x)$ is jointly continuous.
	
	\smallskip
	
	We call this mapping \textit{the order $p$ continuous local time of $S$ along the sequence of partitions $\pi = (\pi_n)_{n \in \mathbb{N}}$}, and write $\pazocal{L}_p^c(\pi)$ for the collection of all functions $S$ in $C([0, T], \mathbb{R})$ having a continuous local time of order $p$ along the given sequence of partitions $\pi = (\pi_n)_{n \in \mathbb{N}}$.
\end{defn}

\bigskip

\bigskip

\section{Ramifications}
\label{sec: 3}

In this section, we give some results regarding the local time $L^{p, c}$ of Definition~\ref{Def : local time2}.

\smallskip

First, we have the following proposition which is reminiscent of very familiar properties of semimartingale local time. Note that the continuous local time $L^{p, c}_t(a)$ of Definition~\ref{Def : local time2} for fixed $a$, is nondecreasing in $t$, thus of finite first variation in this variable, and the integrals in this proposition should be understood as Lebesgue-Stieltjes integrals with respect to this temporal variable.

\smallskip

\begin{prop}
	For a continuous function $S \in \pazocal{L}_p^c(\pi)$ in Definition~\ref{Def : local time2}, we assume further that the mesh size of $(\pi_n)_{n \in \mathbb{N}}$ goes to zero as $n \rightarrow \infty$. Then, for every fixed $a \in \mathbb{R}$, we have the following identities
	\begin{equation}		\label{Eq : Equivalent LS}
		\int_0^t \mathbbm{1}_{\{S_u=a\}}dL^{p, c}_u(a) = L^{p, c}_t(a), \qquad \int_0^t \mathbbm{1}_{\{S_u<a\}}dL^{p, c}_u(a) = \int_0^t \mathbbm{1}_{\{S_u>a\}}dL^{p, c}_u(a) = 0.
	\end{equation}
\end{prop}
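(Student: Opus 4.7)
My strategy is to show that the nondecreasing function $t \mapsto L_t^{p, c}(a)$ is locally constant on the open set $\{u \in [0,T] : S_u \neq a\}$; equivalently, that the associated Lebesgue-Stieltjes measure $dL_\cdot^{p,c}(a)$ is supported on the closed set $\{u : S_u = a\}$. Granted this, the two vanishing identities in \eqref{Eq : Equivalent LS} follow immediately, because $\{u : S_u < a\}$ and $\{u : S_u > a\}$ both lie outside this support; and the first identity then drops out upon writing $1 = \mathbbm{1}_{\{S_u=a\}} + \mathbbm{1}_{\{S_u<a\}} + \mathbbm{1}_{\{S_u>a\}}$ under the total integral $\int_0^t dL_u^{p,c}(a) = L_t^{p,c}(a)$.

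To establish the local-constancy claim I will fix an arbitrary $u_0 \in [0,T]$ with $S_{u_0} \neq a$, say $S_{u_0} > a$ without loss of generality, and exhibit a neighborhood of $u_0$ on which $L_\cdot^{p,c}(a)$ is constant. By continuity of $S$ there exist $\delta, \varepsilon > 0$ with $S_u > a + \varepsilon$ for every $u$ in $I := (u_0 - \delta, u_0 + \delta) \cap [0,T]$. Now pick $n$ large enough that both $osc(S, \pi_n) < \varepsilon$ and the mesh of $\pi_n$ is strictly less than $\delta/2$. Any partition interval $[t_j, t_{j+1}] \in \pi_n$ with $t_j \in I' := (u_0 - \delta/2, u_0 + \delta/2) \cap [0,T]$ then satisfies $t_{j+1} < t_j + \delta/2$, so $t_{j+1} \in I$; consequently both $S_{t_j}$ and $S_{t_{j+1}}$ exceed $a + \varepsilon$, forcing $a \notin \llparenthesis S_{t_j}, S_{t_{j+1}} \rrbracket$ and a zero contribution to the defining sum \eqref{def : local time2}. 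This yields $L_{s_2}^{\pi_n; p}(a) = L_{s_1}^{\pi_n; p}(a)$ whenever $s_1 \leq s_2$ both lie in $I'$ and $n$ is sufficiently large; taking $n \to \infty$ using the pointwise convergence supplied by Definition~\ref{Def : local time2} gives $L_\cdot^{p, c}(a)$ constant on $I'$, as required.

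The main technical obstacle, mild though it is, is the endpoint bookkeeping inherent in the above step: when restricting to partition intervals whose \emph{left} endpoint $t_j$ lies in a neighborhood of $u_0$, I must guarantee that the \emph{right} endpoint $t_{j+1}$ still sits in the region where $S$ is uniformly bounded away from $a$. This is precisely why the proposition imposes the mesh-size hypothesis (strictly stronger than the $osc(S, \pi_n) \to 0$ required by Definition~\ref{Def : local time2}), and why I shrink the neighborhood from $I$ down to $I'$ by a margin controlled by that mesh. Beyond this point the argument mirrors the standard semimartingale proof, exploiting the fact that the indicator $\mathbbm{1}_{\llparenthesis \cdot, \cdot \rrbracket}(a)$ activates only when $S$ actually crosses the level $a$ over a given partition interval.
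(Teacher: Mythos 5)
Your proof is correct and takes a genuinely different route from the paper. You establish the stronger, structural fact that the Lebesgue--Stieltjes measure $dL_\cdot^{p,c}(a)$ is supported on the closed level set $\{u : S_u = a\}$, by showing local constancy of $t \mapsto L_t^{p,c}(a)$ on the open complement; both the two vanishing identities and the first identity in \eqref{Eq : Equivalent LS} then follow by pure measure theory (with the harmless observation $L_0^{p,c}(a)=0$). The paper instead applies the Monotone Convergence Theorem to peel the set $\{S_u < a\}$ into sets $\{S_u \le a - 2^{-m}\}$, then approximates the Stieltjes integral by Riemann--Stieltjes sums built from the discrete local times, and shows each such sum vanishes once $osc(S,\pi_n) < 2^{-m}$. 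Your argument is cleaner in that it avoids the slightly delicate step of passing from the Lebesgue--Stieltjes integral against the discontinuous integrand $\mathbbm{1}_{(-\infty,a-2^{-m}]}(S_\cdot)$ to its limiting Riemann--Stieltjes sums against $L^{\pi_n;p}$, and it isolates a reusable fact (the support statement). One small observation worth noting: the mesh hypothesis is not actually needed in your argument. If $t_j \in I'$, then $S_{t_j} > a + \varepsilon$ already, and $osc(S,\pi_n) < \varepsilon$ alone forces $S_{t_{j+1}} > a$; since both endpoints exceed $a$ strictly, $a \notin \llparenthesis S_{t_j}, S_{t_{j+1}} \rrbracket$ without any control on $t_{j+1}$. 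Thus you could drop the mesh bound and shrink only to $I$, not $I'$; your discussion attributing the mesh hypothesis to endpoint bookkeeping is therefore slightly misplaced --- the hypothesis seems needed (if at all) for the paper's Riemann--Stieltjes approximation step rather than for the local-constancy mechanism.
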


\smallskip

\begin{proof}
	By the Monotone Convergence Theorem and \eqref{def : local time2} in Definition~\ref{Def : local time2}, we have
	\begin{align*}
		\int_0^t \mathbbm{1}_{\{S_u<a\}}dL^{p, c}_u(a) 
		&= \lim_{m \rightarrow \infty} \int_0^t \mathbbm{1}_{(-\infty, ~a-\frac{1}{2^m}]}(S_{u}) dL_u^{p, c}(a)
		\\
		&= \lim_{m \rightarrow \infty} \lim_{n \rightarrow \infty} \sum_{\substack{[t_j, t_{j+1}] \in \pi_n \\ t_j \leq t}} \mathbbm{1}_{(-\infty, ~a-\frac{1}{2^m}]}(S_{t_j}) \big(L_{t_{j+1}}^{\pi_n; p}(a) - L_{t_{j}}^{\pi_n; p}(a) \big)
		\\
		&= \lim_{m \rightarrow \infty} \lim_{n \rightarrow \infty} \sum_{\substack{[t_j, t_{j+1}] \in \pi_n \\ t_j \leq t}} \mathbbm{1}_{(-\infty, ~a-\frac{1}{2^m}]}(S_{t_j}) \mathbbm{1}_{\llparenthesis S_{t_j}, S_{t_{j+1}} \rrbracket} (a) \big| S_{t_{j+1}} - a \big| ^{p-1}
		\\
		&=0;
	\end{align*}
	that is because, for fixed $m$, we can take $n$ sufficiently large so that $osc(S, \pi_n) < 1/2^{m}$, which guarantees 
	\begin{equation*}
		\mathbbm{1}_{(-\infty, ~a-\frac{1}{2^m}]}(S_{t_j}) \mathbbm{1}_{\llparenthesis S_{t_j}, S_{t_{j+1}} \rrbracket} (a) \equiv 0.
	\end{equation*}
	In a similar manner, we can show that $\int_0^t \mathbbm{1}_{\{S_u>a\}}dL^{p, c}_u(a) = 0$ holds for any given $a \in \mathbb{R}$ and the result follows.
\end{proof}

\bigskip

Our next result generalizes Proposition~4.1 of \cite{Davis2018}.

\begin{prop}	\label{prop : local time of function of function}
	Let $S \in \pazocal{L}_p^c(\pi)$ and $f \in C^1(\mathbb{R}, \mathbb{R})$ be strictly monotone. Then $f(S)$ is also in $\pazocal{L}_p^c(\pi)$, and the relationship
	\begin{equation}	\label{Eq : local time relationship}
		L_t^{f(S), p, c}\big(f(a)\big) = \big|f'(a)\big|^{p-1} L_t^{S, p, c}(a)
	\end{equation}
	between the two local times holds.
\end{prop}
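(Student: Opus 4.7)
The plan is to compare the discrete local times of $f(S)$ and of $S$ term by term, using strict monotonicity to match indicator functions and the Mean Value Theorem to relate the factors $|f(S_{t_{j+1}}) - f(a)|^{p-1}$ to $|S_{t_{j+1}} - a|^{p-1}$. Without loss of generality take $f$ strictly increasing; the decreasing case is handled identically, since only $|f'|^{p-1}$ enters the final expression. Starting from
\begin{equation*}
L^{f(S), \pi_n; p}_t\big(f(a)\big) = \sum_{\substack{[t_j, t_{j+1}] \in \pi_n \\ t_j \leq t}} \mathbbm{1}_{\llparenthesis f(S_{t_j}), f(S_{t_{j+1}}) \rrbracket}\big(f(a)\big) \big|f(S_{t_{j+1}}) - f(a)\big|^{p-1},
\end{equation*}
I would first observe that, since $f$ is a strictly monotone continuous bijection from $\mathbb{R}$ onto $f(\mathbb{R})$, the point $f(a)$ lies in the (open or half-open) interval with endpoints $f(S_{t_j})$ and $f(S_{t_{j+1}})$ if and only if $a$ lies in the corresponding interval with endpoints $S_{t_j}$ and $S_{t_{j+1}}$.

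Next, I would apply the Mean Value Theorem to write $f(S_{t_{j+1}}) - f(a) = f'(\xi_j)(S_{t_{j+1}} - a)$ for some $\xi_j$ strictly between $a$ and $S_{t_{j+1}}$. The range of $S$ over $[0, T]$ is a compact interval $K \subset \mathbb{R}$, so $f'$ is uniformly continuous on $K$. On any summand that actually contributes to $L^{S, \pi_n; p}_t(a)$, both $S_{t_j}$ and $S_{t_{j+1}}$ lie within $osc(S, \pi_n)$ of $a$, so $|\xi_j - a| \leq osc(S, \pi_n)$, and hence
\begin{equation*}
\big||f'(\xi_j)|^{p-1} - |f'(a)|^{p-1}\big| \leq \varepsilon_n, \qquad \varepsilon_n \xrightarrow{n \to \infty} 0,
\end{equation*}
uniformly in $j$ and $a \in K$. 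Combining the two steps yields
\begin{equation*}
L^{f(S), \pi_n; p}_t\big(f(a)\big) = \big(|f'(a)|^{p-1} + O(\varepsilon_n)\big)\,L^{S, \pi_n; p}_t(a) + R_n(a),
\end{equation*}
where the remainder $R_n(a)$ collects the endpoint discrepancies discussed below. Passing to the limit and invoking the uniform convergence $L^{S, \pi_n; p}_t \to L^{S, p, c}_t$ from Definition~\ref{Def : local time2} delivers the identity \eqref{Eq : local time relationship} pointwise in $a$.

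To finish, I would upgrade pointwise convergence to uniform convergence in the variable $y = f(a)$. Since $f : K \to f(K)$ is a homeomorphism, uniform convergence of $L^{S, \pi_n; p}_t(a)$ in $a \in K$ transfers to uniform convergence in $y \in f(K)$; outside $f(K)$ both sides vanish for $n$ sufficiently large. Joint continuity of $(t, y) \mapsto L_t^{f(S), p, c}(y) = |f'(f^{-1}(y))|^{p-1} L_t^{S, p, c}(f^{-1}(y))$ then follows from joint continuity of $L^{S, p, c}$ together with continuity of $f^{-1}$ and of $|f'|^{p-1}$, which places $f(S)$ in $\pazocal{L}_p^c(\pi)$.

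The principal technical nuisance is the half-open endpoint convention in $\llparenthesis \cdot, \cdot \rrbracket$, which is not preserved under a decreasing $f$ (and can even be tricky for increasing $f$ at boundary values $a = S_{t_j}$). I expect the cleanest way around this is to argue that the offending summands occur only when $a \in \{S_{t_j}, S_{t_{j+1}}\}$: at $a = S_{t_{j+1}}$ the summand itself is zero, while at $a = S_{t_j}$ the summand is bounded by $osc(S, \pi_n)^{p-1}$, and the number of indices $j$ with $S_{t_j} = a$ is under control (for generic $a$ even zero, and otherwise absorbed by the uniform convergence already available for $L^{S, \pi_n; p}$). This confines $R_n(a)$ to an error vanishing uniformly in $a \in K$, which is all that is needed.
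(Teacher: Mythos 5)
Your proof is correct and follows essentially the same route as the paper: match the indicator functions using injectivity of $f$, replace $|f(S_{t_{j+1}})-f(a)|^{p-1}$ by $|f'(\xi_j)|^{p-1}|S_{t_{j+1}}-a|^{p-1}$ via the Mean Value Theorem, and use uniform continuity of $f'$ on the compact range of $S$ together with $osc(S,\pi_n)\to 0$ to pass to $|f'(a)|^{p-1}$. The paper gives exactly these three steps and then invokes the convergence of the discrete local times of $S$; it does not spell out the uniform convergence in the target variable $y=f(a)$ or the joint continuity of the resulting limit, both of which you handle explicitly by transferring through the homeomorphism $f|_K$.

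Where you genuinely add something is the half-open endpoint issue. The paper justifies the indicator identity $\mathbbm{1}_{\llparenthesis f(S_{t_j}), f(S_{t_{j+1}})\rrbracket}(f(a))=\mathbbm{1}_{\llparenthesis S_{t_j}, S_{t_{j+1}}\rrbracket}(a)$ solely by ``$f$ is one-to-one.'' For strictly increasing $f$ this is an exact identity, but for strictly decreasing $f$ it fails at the excluded endpoint $a=S_{t_j}$: the image interval has its open and closed ends swapped relative to $\llparenthesis f(S_{t_j}), f(S_{t_{j+1}})\rrbracket$, and at $a=S_{t_j}$ the offending factor $|f(S_{t_{j+1}})-f(a)|^{p-1}$ need not vanish. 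Your remark that the error is supported on $\{a=S_{t_j}\}$, that each such term is $O(osc(S,\pi_n)^{p-1})$, and that for $a$ outside the countable set of partition values the error is identically zero, is the right diagnosis; the paper elides this point entirely (and its parenthetical ``$a=S_{t_j}$'' in the MVT step suggests it did not separate the boundary case from the interior one). Your hint that the remainder is ``absorbed by the uniform convergence'' of $L^{S,\pi_n;p}_t$ is a little loose as stated — the excluded terms sit outside $L^{S,\pi_n;p}_t(a)$ rather than inside it, so one would instead argue via $R_n(a)\equiv 0$ off a countable set together with continuity of the limit, or reduce the decreasing case to the increasing one through $-S$ and the symmetry $L^{-S,p,c}_t(-a)=L^{S,p,c}_t(a)$ — but the gap you flag is real and the direction of the fix is sound.
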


\begin{proof}
	When it exists the local time $L_t^{f(S), p, c}(f(a))$, is defined as the limit of
	\begin{align}	
		&\sum_{\substack{[t_j, t_{j+1}] \in \pi_n \\ t_j \leq t}} \mathbbm{1}_{\llparenthesis f(S_{t_j}), f(S_{t_{j+1}}) \rrbracket} \big(f(a)\big) \Big| f(S_{t_{j+1}}) - f(a) \Big| ^{p-1}		\nonumber
		\\
		=&\sum_{\substack{[t_j, t_{j+1}] \in \pi_n \\ t_j \leq t}} \mathbbm{1}_{\llparenthesis S_{t_j}, S_{t_{j+1}} \rrbracket} (a) \Big| f(S_{t_{j+1}}) - f(a) \Big| ^{p-1}			\label{Eq : f(S) discrete}
	\end{align}
	because the function $f$ is one-to-one, being strictly monotone. Since $f \in C^1$, there exists a number $z_j(a)$ between $a$ and $S_{t_{j+1}}$ such that
	\begin{equation*}
		\big| f(S_{t_{j+1}}) - f(a) \big| ^{p-1} = \big|f'\big(z_j(a)\big)\big|^{p-1}\big|S_{t_{j+1}}-a\big|^{p-1},
	\end{equation*}
	and \eqref{Eq : f(S) discrete} becomes
	\begin{equation}	\label{Eq : f'(z)}
		\sum_{\substack{[t_j, t_{j+1}] \in \pi_n \\ t_j \leq t}} \mathbbm{1}_{\llparenthesis S_{t_j}, S_{t_{j+1}} \rrbracket} (a) \Big|f'\big(z_j(a)\big)\Big|^{p-1}\Big|S_{t_{j+1}}-a\Big|^{p-1}
	\end{equation}
	On the other hand, the right-hand side of \eqref{Eq : local time relationship} is the limit of  
	\begin{equation}	\label{Eq : f'(a)}
		\sum_{\substack{[t_j, t_{j+1}] \in \pi_n \\ t_j \leq t}} \mathbbm{1}_{\llparenthesis S_{t_j}, S_{t_{j+1}} \rrbracket} (a) \Big|f'(a)\Big|^{p-1}\Big|S_{t_{j+1}}-a\Big|^{p-1}
	\end{equation}
	and the difference between \eqref{Eq : f'(z)} and \eqref{Eq : f'(a)} is bounded by
	\begin{equation}	\label{Eq : difference}
		\sum_{\substack{[t_j, t_{j+1}] \in \pi_n \\ t_j \leq t}} \mathbbm{1}_{\llparenthesis S_{t_j}, S_{t_{j+1}} \rrbracket} (a) \bigg| \Big|f'\big(z_j(a)\big)\Big|^{p-1}-\Big|f'(a)\Big|^{p-1} \bigg| \Big|S_{t_{j+1}}-a\Big|^{p-1}.
	\end{equation}
	Here, $z_j(a) \rightarrow a$ as $osc(S, \pi_n) \rightarrow 0$ because $z_j(a)$ is between $a=S_{t_j}$ and $S_{t_{j+1}}$. Thus, we have
	\begin{equation*}
		\Big| \big|f'\big(z_j(a)\big)\big|^{p-1}-\big|f'(a)\big|^{p-1} \Big| \rightarrow 0	
	\end{equation*}
	as well as
	\begin{equation*}
		\sum_{\substack{[t_j, t_{j+1}] \in \pi_n \\ t_j \leq t}} \mathbbm{1}_{\llparenthesis S_{t_j}, S_{t_{j+1}} \rrbracket} (a) \big|S_{t_{j+1}}-a\big|^{p-1} \rightarrow L_t^{S, p, c}(a),
	\end{equation*}
	and the difference \eqref{Eq : difference} converges to zero. The result follows.
\end{proof}

\medskip

We have the following corollary of Proposition~\ref{prop : local time of function of function}.

\medskip

\begin{cor}
	Suppose $S$ is a nonnegative continuous function. If the function $Y = S^r$ belongs to $\mathcal{L}^c_p(\pi)$ for some $r \in (0, 1)$, then $S$ is also in $\mathcal{L}^c_p(\pi)$ and we have $L^{S, p, c}_t(0) \equiv 0$.
\end{cor}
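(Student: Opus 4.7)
The plan is to apply Proposition~\ref{prop : local time of function of function} in the reverse direction, using the inverse map $y \mapsto y^{1/r}$. Since $S$ takes only nonnegative values and $Y = S^r$, we have $S = f(Y)$ where on the nonnegative half-line $f(y) = y^{1/r}$. To fit the hypothesis of Proposition~\ref{prop : local time of function of function}, which requires an $f \in C^1(\mathbb{R}, \mathbb{R})$ that is strictly monotone on all of $\mathbb{R}$, I would extend $f$ symmetrically by setting
\begin{equation*}
f(y) := \mathrm{sgn}(y)\,|y|^{1/r}, \qquad y \in \mathbb{R}.
\end{equation*}
This extension does not change the value of $f(Y)$ since $Y \geq 0$, but it puts us in the framework of the proposition.

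Next I would verify that this $f$ satisfies the hypotheses. Strict monotonicity is immediate. For $C^1$-regularity on $\mathbb{R}$, the only delicate point is $y=0$: one computes $f'(y) = (1/r)|y|^{1/r-1}$ for $y \neq 0$, and since $r \in (0,1)$ implies $1/r - 1 > 0$, both one-sided limits of $f'$ at the origin vanish and the difference quotient of $f$ at $0$ is $\mathrm{sgn}(y)|y|^{1/r-1} \to 0$. Hence $f'(0) = 0$ and $f' \in C(\mathbb{R})$.

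With the hypotheses verified, Proposition~\ref{prop : local time of function of function} applied to $Y \in \pazocal{L}_p^c(\pi)$ yields $S = f(Y) \in \pazocal{L}_p^c(\pi)$, together with the identity
\begin{equation*}
L_t^{S, p, c}\bigl(f(a)\bigr) \;=\; \bigl|f'(a)\bigr|^{p-1}\, L_t^{Y, p, c}(a),\qquad a \in \mathbb{R}.
\end{equation*}
Setting $a = 0$ and using $f(0) = 0$ together with $f'(0) = 0$ gives $L_t^{S, p, c}(0) = 0$ for every $t \in [0, T]$, which is exactly the required vanishing at the origin.

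I do not expect any genuine obstacle here beyond the regularity check at the origin, which is the only substantive point: the exponent restriction $r \in (0, 1)$ is what makes $|y|^{1/r}$ have vanishing derivative at $0$, so that its odd extension lies in $C^1(\mathbb{R}, \mathbb{R})$ and Proposition~\ref{prop : local time of function of function} is applicable. The corollary is then a direct specialization of that identity at $a = 0$.
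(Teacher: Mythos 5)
Your proof is correct and follows the same route as the paper's own: apply Proposition~\ref{prop : local time of function of function} with the inverse map $y\mapsto y^{1/r}$, then set $a=0$ in the resulting identity and use $f'(0)=0$ (a consequence of $1/r-1>0$) to conclude $L^{S,p,c}_t(0)\equiv 0$. You are in fact a bit more careful than the paper, which simply takes $f(x)=x^{1/r}$ and asserts it is $C^1$ and strictly increasing on $\mathbb{R}$ --- a claim that does not hold verbatim for every $r\in(0,1)$ (e.g.\ $r=1/2$ gives $x^2$, not monotone on $\mathbb{R}$; $r=2/3$ gives a map not real-valued on $(-\infty,0)$). Your odd extension $f(y)=\mathrm{sgn}(y)\,|y|^{1/r}$, together with the explicit check that $f'$ is continuous at $0$ with $f'(0)=0$, is precisely what is needed to make the hypotheses of Proposition~\ref{prop : local time of function of function} literally satisfied; this is the detail the paper glosses over. (One trivial slip: the difference quotient at $0$ equals $|h|^{1/r-1}$, not $\mathrm{sgn}(h)|h|^{1/r-1}$, but the limit is $0$ either way, so the conclusion stands.)
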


\begin{proof}
	Let $f(x) = x^{1/r}$, then $f$ is $C^1$, strictly increasing and $S = f(Y)$. The relationship \eqref{Eq : local time relationship} for $a = 0$ yields
	\begin{equation*}
		L^{S, p, c}_t(0) = L^{f(Y), p, c}_t\big(f(0)\big) = \big| f'(0) \big|^{p-1} L_t^{Y, p, c}(0) \equiv 0.
	\end{equation*}
\end{proof}

\bigskip

\begin{rem} [Proper order of local time]
	When the order $p$ continuous local time $L^{p, c}(\cdot)$ of $S$ along $\pi$, defined as the limit of \eqref{def : local time2}, is nontrivial for some even natural number $p$, i.e., $0 < L_t^{p, c}(\cdot) < \infty$, this integer $p$ provides the `proper order' of local time for the function $S$, as the local times $L^{r, c}(\cdot)$ of order $r < p$ may not exist.
	
	\smallskip
	
	Indeed, using the fact that $| S_{t_{j+1}} - \cdot | \leq osc(S, \pi_n)$ holds whenever $\mathbbm{1}_{\llparenthesis S_{t_j}, S_{t_{j+1}} \rrbracket}(\cdot) = 1$, we obtain
	\begin{align*}
		L_t^{\pi_n; r}(\cdot) & = \sum_{\substack{[t_j, t_{j+1}] \in \pi_n \\ t_j \leq t}} \mathbbm{1}_{\llparenthesis S_{t_j}, S_{t_{j+1}} \rrbracket} (\cdot) \big| S_{t_{j+1}} - \cdot \big| ^{r-1}
		\\
		& =\sum_{\substack{[t_j, t_{j+1}] \in \pi_n \\ t_j \leq t}} \mathbbm{1}_{\llparenthesis S_{t_j}, S_{t_{j+1}} \rrbracket} (\cdot) \big| S_{t_{j+1}} - \cdot \big|^{p-1} \frac{1}{\big| S_{t_{j+1}} - \cdot \big|^{p-r}}
		\\
		&\geq \frac{1}{\big|osc(S, \pi_n)\big|^{p-r}} \sum_{\substack{[t_j, t_{j+1}] \in \pi_n \\ t_j \leq t}} \mathbbm{1}_{\llparenthesis S_{t_j}, S_{t_{j+1}} \rrbracket} (\cdot) \big| S_{t_{j+1}} - \cdot \big|^{p-1}.
	\end{align*}
	Here, the right-hand side diverges as $osc(S, \pi_n) \rightarrow 0$, unless the summation converges to zero, in other words, $L_t^{p, c}(\cdot) \equiv 0$.
	
	\smallskip
	
	By a similar argument, when the order $p$ local time $L^{p, c}(\cdot)$ of Definition~\ref{Def : local time2} is well-defined and nontrivial, the higher-order local times vanish: $L^{\rho, c}(\cdot) \equiv 0$ for $\rho > p$. If we assume $L^{\rho, c}(\cdot)$ is also nontrivial, then by the previous argument $L^{p, c}(\cdot)$ must diverge, contradicting our assumption.
	
	\smallskip
	
	In conclusion, if there exists an even integer $p$ for a given function $S \in C([0, T], \mathbb{R})$ and a sequence of partitions $\pi = (\pi_n)_{n \in \mathbb{N}}$ such that the continuous local time $L^{p, c}(\cdot)$ of order $p$ of $S$ along $\pi$ is nontrivial and well-defined, the order $p$ is the only order to be considered; the local times of lower orders do not exist, and those of higher orders vanish.
\end{rem}

\bigskip

Using the Definition~\ref{Def : local time2}, we also have the following new ``pathwise Tanaka'' formula.

\medskip

\begin{thm} [``It{\^ o}-Tanaka'' formula for paths with continuous local times]
	\label{Thm : tanaka formula2}
	Let $p \in \mathbb{N}$ be an even integer. Let $f \in C^{p-2}(\mathbb{R}, \mathbb{R})$ be a function with absolutely continuous derivative $f^{(p-2)}$, and assume that the weak derivative $f^{(p-1)}$ of this latter function is right-continuous and of bounded variation.
	
	Then for any function $S \in C([0, T], \mathbb{R})$ in the collection $\pazocal{L}_p^c(\pi)$ of Definition~\ref{Def : local time2}, we have the following change of variable formula:
	\begin{equation} \label{Eq : tanaka formula2}
		f\big(S(t)\big) - f\big(S(0)\big) = \int_0^t f'\big(S(u)\big)dS(u) + \frac{1}{(p-1)!}\int_{\mathbb{R}} L_t^{p, c}(x) df^{(p-1)}(x).
	\end{equation}
	The first integral on the right-hand side is defined as in \eqref{Eq : follmer integral}.
\end{thm}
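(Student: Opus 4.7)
The plan is to follow the derivation of Theorem~\ref{Thm : tanaka formula} carried out in the paragraphs preceding its statement, and then pass to the limit by exploiting the uniform convergence built into Definition~\ref{Def : local time2}. The hypotheses on $f$ here are exactly the ones used to justify the order-$(p-2)$ Taylor expansion \eqref{Eq : Taylor} together with the integration-by-parts step that produces equation~\eqref{Eq : change of variable}; the paragraph following Theorem~\ref{Thm : tanaka formula} stresses this point explicitly. Thus for every $n$ we already have at our disposal the identity
\begin{equation*}
	f(S_t) - f(S_0) - \sum_{\substack{[t_j, t_{j+1}] \in \pi_n \\ t_j \leq t}} \sum_{k=1}^{p-1} \frac{f^{(k)}(S_{t_j})}{k!}(S_{t_{j+1}}-S_{t_j})^k \;=\; \frac{1}{(p-1)!}\int_{\mathbb{R}} L_t^{\pi_n; p}(x)\, df^{(p-1)}(x).
\end{equation*}

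The task reduces to showing that the Lebesgue--Stieltjes integral on the right-hand side converges, as $n \to \infty$, to $\frac{1}{(p-1)!}\int_{\mathbb{R}} L_t^{p, c}(x)\, df^{(p-1)}(x)$. The key observation is that the discrete local time $L_t^{\pi_n; p}$ has compact support: the indicator $\mathbbm{1}_{\llparenthesis S_{t_j}, S_{t_{j+1}} \rrbracket}(x)$ forces $x$ to lie within $osc(S, \pi_n)$ of the range of $S|_{[0,t]}$, so for $n$ large enough both $L_t^{\pi_n; p}$ and its uniform limit $L_t^{p, c}$ are supported in a fixed compact interval $K \subset \mathbb{R}$. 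Since $f^{(p-1)}$ is right-continuous and of bounded variation, the signed Borel measure $df^{(p-1)}$ has finite total variation $V$ on $K$, and we estimate
\begin{equation*}
	\left| \int_{\mathbb{R}} \bigl(L_t^{\pi_n; p}(x) - L_t^{p, c}(x)\bigr)\, df^{(p-1)}(x) \right| \leq V \cdot \bigl\| L_t^{\pi_n; p} - L_t^{p, c} \bigr\|_{\infty},
\end{equation*}
which tends to zero by the uniform convergence postulated in Definition~\ref{Def : local time2}.

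Since the right-hand side of the starting identity converges, the left-hand side must converge as well; and because $f(S_t) - f(S_0)$ does not depend on $n$, the compensated Riemann sums converge to a finite limit, which is by definition the F\"ollmer integral $\int_0^t f'(S(u))\, dS(u)$ of \eqref{Eq : follmer integral}. Rearranging the resulting limit identity yields the change of variable formula~\eqref{Eq : tanaka formula2}. There is no substantive obstacle beyond the bookkeeping with compact support and total variation: the real content is already contained in the Taylor-expansion derivation of Section~\ref{sec: 2}, and the present argument simply upgrades the weak $\mathbb{L}^q$-convergence of discrete local times invoked in Theorem~\ref{Thm : tanaka formula} to the stronger uniform convergence now available, thereby permitting $f^{(p-1)}$ to be only of bounded variation rather than weakly differentiable.
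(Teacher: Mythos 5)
Your proof is correct and takes essentially the same route as the paper: it starts from the prelimit identity \eqref{Eq : change of variable}, which the hypotheses on $f$ already justify, and passes to the limit in the Lebesgue--Stieltjes integral using the uniform convergence $L_t^{\pi_n;p} \to L_t^{p,c}$ from Definition~\ref{Def : local time2}. The paper's proof is a one-line version of the same argument; your additional remarks on compact support and the total-variation estimate simply make explicit why that uniform convergence suffices.
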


\smallskip

\begin{proof}
	Since $L_t^{\pi_n; p}(\cdot)$ converges uniformly to $L_t^{p, c}(\cdot)$, the last integral on the right-hand side of \eqref{Eq : change of variable} converges to the last integral of \eqref{Eq : tanaka formula2}, and the claim follows.
\end{proof}

\smallskip

It is instructive to compare this result, with that of Theorem~\ref{Thm : tanaka formula}. The regularity requirements on the function $f$ are here weaker, but there is a trade-off: Additional regularity is imposed here on the local time, as evinced by comparing Definition~\ref{Def : local time2} with Definition~\ref{Def : local time}.

\smallskip

A pathwise version of the classical Tanaka-Meyer formula is a direct consequence of Theorem~\ref{Thm : tanaka formula2}.

\medskip

\begin{cor}		\label{Cor : Tanaka-Meyer}
	Consider a function $S \in C([0, T], \mathbb{R})$ which belongs to the collection $\pazocal{L}^c_p(\pi)$ of Definition~\ref{Def : local time2} for a given sequence of partitions $\pi=(\pi_n)_{n \in \mathbb{N}}$ of $[0, T]$ and an even integer $p \in \mathbb{N}$. The pathwise $p$-th order Tanaka-Meyer formula
	\begin{equation}	\label{Eq : tanaka-meyer1}
		L_t^{p, c}(a) = \big((S_t-a)^+\big)^{p-1} - \big((S_0-a)^+\big)^{p-1} - \int_0^t \mathbbm{1}_{(a, \infty)}(S_u)\nabla_p (S_u-a)^{+}dS_u
	\end{equation}
	holds then for all $(t, a) \in [0, T] \times \mathbb{R}$. Here, the last term represents the pointwise limit of compensated Riemann sums
	\begin{align}	
		\int_0^t \mathbbm{1}_{(a, \infty)}(S_u)\nabla_p (S_u-a)^{+}dS_u 
		&:= \lim_{n \rightarrow \infty} \sum_{\substack{[t_j, t_{j+1}] \in \pi_n \\ t_j \leq t}} \mathbbm{1}_{(a, \infty)}(S_{t_j}) \sum_{k=1}^{p-1}\binom{p-1}{k}(S_{t_j}-a)^{p-k-1}(S_{t_{j+1}}-S_{t_j})^k	\nonumber
		\\
		&=\lim_{n \rightarrow \infty} \sum_{\substack{[t_j, t_{j+1}] \in \pi_n \\ t_j \leq t}} \mathbbm{1}_{(a, \infty)}(S_{t_j})\big\{(S_{t_{j+1}}-a)^{p-1}-(S_{t_j}-a)^{p-1}\big\}.	\label{Eq : limit tanaka-meyer1}
	\end{align}
\end{cor}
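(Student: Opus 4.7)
The plan is to apply Theorem~\ref{Thm : tanaka formula2} to the particular test function $f_a(x) := \bigl((x-a)^+\bigr)^{p-1}$, and read off the desired identity. Since $p$ is even, $p-1$ is odd, so $f_a$ vanishes on $(-\infty, a]$ and equals $(x-a)^{p-1}$ on $(a, \infty)$; an easy direct computation shows $f_a \in C^{p-2}(\mathbb{R},\mathbb{R})$ with $f_a^{(p-2)}(x) = (p-1)! \,(x-a)^+$, which is absolutely continuous, and its weak derivative (in the RCLL version) is $f_a^{(p-1)}(x) = (p-1)!\, \mathbbm{1}_{(a, \infty)}(x)$, a function of bounded variation on compacts. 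Thus the hypotheses of Theorem~\ref{Thm : tanaka formula2} are satisfied.

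The second step is to compute the two terms appearing on the right-hand side of \eqref{Eq : tanaka formula2} for this particular $f_a$. The local time term is immediate: since $d f_a^{(p-1)} = (p-1)!\,\delta_a$, we have
\begin{equation*}
    \frac{1}{(p-1)!}\int_{\mathbb{R}} L_t^{p,c}(x)\,df_a^{(p-1)}(x) = L_t^{p,c}(a).
\end{equation*}
For the F\"ollmer integral, one notes that $f_a^{(k)}(x) = \frac{(p-1)!}{(p-1-k)!}\,\mathbbm{1}_{(a,\infty)}(x)(x-a)^{p-1-k}$ for $1 \leq k \leq p-1$, so the compensated Riemann sum \eqref{Eq : follmer integral} becomes
\begin{equation*}
    \sum_{\substack{[t_j,t_{j+1}] \in \pi_n \\ t_j \leq t}} \mathbbm{1}_{(a, \infty)}(S_{t_j}) \sum_{k=1}^{p-1} \binom{p-1}{k}(S_{t_j}-a)^{p-1-k}(S_{t_{j+1}}-S_{t_j})^k,
\end{equation*}
which is exactly the first expression in \eqref{Eq : limit tanaka-meyer1}.

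The third step is to identify this with the second expression in \eqref{Eq : limit tanaka-meyer1}: the binomial expansion of $(S_{t_{j+1}} - a)^{p-1} = \bigl((S_{t_j} - a) + (S_{t_{j+1}} - S_{t_j})\bigr)^{p-1}$ gives
\begin{equation*}
    (S_{t_{j+1}}-a)^{p-1} - (S_{t_j}-a)^{p-1} = \sum_{k=1}^{p-1}\binom{p-1}{k}(S_{t_j}-a)^{p-1-k}(S_{t_{j+1}}-S_{t_j})^k,
\end{equation*}
so the two forms of the limit in \eqref{Eq : limit tanaka-meyer1} coincide term-by-term. Substituting these into \eqref{Eq : tanaka formula2}, noting that $f_a(S_t) - f_a(S_0) = ((S_t - a)^+)^{p-1} - ((S_0 - a)^+)^{p-1}$, and solving for $L_t^{p,c}(a)$, yields \eqref{Eq : tanaka-meyer1}.

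There is no real obstacle here; the proof is essentially a specialization of Theorem~\ref{Thm : tanaka formula2}. The only conceptual point worth flagging is that we must check $f_a \in C^{p-2}$ at the kink $x=a$, which uses that $p-1 \geq 1$ and in fact $p - 1 > p - 2$, so all derivatives up to order $p-2$ of $(x-a)^{p-1}$ vanish from the right at $a$ and match the identically-zero left branch; this is precisely the reason this exact order of smoothness is available and no more.
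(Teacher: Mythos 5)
Your proof is correct and is essentially the same as the paper's: specialize Theorem~\ref{Thm : tanaka formula2} to $f_a(x)=((x-a)^+)^{p-1}$, note $df_a^{(p-1)}=(p-1)!\,\delta_a$, and identify the F\"ollmer integrand via the binomial expansion. One small inaccuracy worth correcting: you call $f_a^{(p-1)}(x)=(p-1)!\,\mathbbm{1}_{(a,\infty)}(x)$ the RCLL version, but the right-continuous representative (which is what Theorem~\ref{Thm : tanaka formula2} and the derivation of \eqref{Eq : change of variable} actually require) is $(p-1)!\,\mathbbm{1}_{[a,\infty)}(x)$, as the paper writes; the associated Stieltjes measure is $(p-1)!\,\delta_a$ either way, so your local-time computation is unaffected, but the correct RCLL representative should be used when plugging $f_a^{(p-1)}(S_{t_j})$ into the compensated Riemann sums underlying \eqref{Eq : follmer integral}.
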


\begin{proof}
	For fixed $a \in \mathbb{R}$, the function $f$ defined by $f(x) = \big((x-a)^+\big)^{p-1}=(x-a)^{p-1}\mathbbm{1}_{(a, \infty)}(x)$ belongs to $C^{p-2}(\mathbb{R}, \mathbb{R})$ and has derivatives $f^{(k)}(x) = (p-1)(p-2) \cdots (p-k)(x-a)^{p-k-1}\mathbbm{1}_{(a, \infty)}(x)$ for $k = 1, \cdots, p-2$; we also note that $f^{(p-2)}$ has weak derivative $f^{(p-1)}(x) = \mathbbm{1}_{[a, \infty)}(x)(p-1)!$ of bounded variation. The result is now immediate from Theorem~\ref{Thm : tanaka formula2} and the Binomial Theorem.
\end{proof}

\bigskip

\begin{rem}	[Additional Tanaka-Meyer formulae]	\label{rem : Tanaka-Meyer}
	We obtain formulae analogous to \eqref{Eq : tanaka-meyer1} by applying Theorem~\ref{Thm : tanaka formula2} with the choices $g(x) = \big((x-a)^-\big)^{p-1}$ and $h(x) = |x-a|^{p-1}$, respectively:
	\begin{equation}	\label{Eq : tanaka-meyer2}
		L_t^{p, c}(a) = \big((S_t-a)^-\big)^{p-1} - \big((S_0-a)^-\big)^{p-1} + \int_0^t \mathbbm{1}_{(-\infty, a)}(S_u)\nabla_p (S_u-a)^{-}dS_u,
	\end{equation}
	and
	\begin{equation}	\label{Eq : tanaka-meyer3}
		2L_t^{p, c}(a) = |S_t-a|^{p-1} - |S_0-a|^{p-1} - \int_0^t \textrm{sign}(S_u-a)\nabla_p|S_u-a|dS_u.
	\end{equation}
	The integral terms are defined as
	\begin{equation*}
		\int_0^t \mathbbm{1}_{(-\infty, a)}(S_u)\nabla_p(S_u-a)^{-}dS_u 
		:= \lim_{n \rightarrow \infty} \sum_{\substack{[t_j, t_{j+1}] \in \pi_n \\ t_j \leq t}} \mathbbm{1}_{(-\infty, a)}(S_{t_j})\big\{(S_{t_{j+1}}-a)^{p-1}-(S_{t_j}-a)^{p-1}\big\}
	\end{equation*}
	and
	\begin{equation*}
		\int_0^t \textrm{sign}(S_u-a)\nabla_p|S_u-a|dS_u 
		:= \lim_{n \rightarrow \infty} \sum_{\substack{[t_j, t_{j+1}] \in \pi_n \\ t_j \leq t}} \textrm{sign}(S_{t_j}-a)\big\{(S_{t_{j+1}}-a)^{p-1}-(S_{t_j}-a)^{p-1}\big\},
	\end{equation*}
	respectively, with the notation $\textrm{sign}(x) = \begin{cases} ~~~1, ~~~ \text{if}~ x \geq 0, \\ -1, ~~~ \text{if}~ x < 0. \end{cases}$
\end{rem}

\bigskip

\begin{rem} [Occupation density formula]
	\label{rem : occupation density formula2}
	Just as Remark~\ref{rem : occupation density formula}, we can also derive the following occupation density formula for the \textit{continuous} local time of order $p$ as in \eqref{Eq : occupation density formula}, for any continuous function $g(\cdot)$:
	\begin{equation}		\label{Eq : occupation density2}
		\int_0^t g\big(S(u)\big)d[S]^p(u) = p\int_{\mathbb{R}} g(x)L_t^{p, c}(x) dx.
	\end{equation}
	Here, we note that the discrete local times $L_t^{\pi_n; p}(\cdot)$ defined in \eqref{def : local time2}, as well as their pointwise limit $L_t^{p, c}(\cdot)$, have a compact support $[m_t, M_t]$ for every fixed $t \in [0, T]$ where 
	\begin{equation}		\label{Def : mM}
		m_t := \min_{0 \leq u \leq t} S_u, \qquad M_t := \max_{0 \leq u \leq t} S_u,
	\end{equation}
	and the continuity of $L_t^{p, c}(\cdot)$ gives the boundedness of $L_t^{p, c}(\cdot)$.
	Furthermore, for any $a<b$, the indicator function $\mathbbm{1}_{(a, b]}(\cdot)$ can be written as the pointwise limit of a sequence of bounded continuous functions. Thus, by the bounded convergence theorem, \eqref{Eq : occupation density2} holds for every function of the form $g(\cdot)=\mathbbm{1}_{B}(\cdot)$ with $B = (a, b]$. On the other hand, the collection of all Borel sets $B$ for which \eqref{Eq : occupation density2} holds with $g = \mathbbm{1}_B$ forms a Dynkin system and so, by the Dynkin System Theorem~(Theorem~2.1.3 in \cite{KS1}), the identity \eqref{Eq : occupation density2} holds for every function of the form $g = \mathbbm{1}_A$, with Borel set $A \in \mathcal{B}(\mathbb{R})$:
	\begin{equation}		\label{Eq : occupation density3}		
		\int_0^t \mathbbm{1}_A\big(S(u)\big)d[S]^p(u) = p \int_{A} L_t^{p, c}(x) dx.
	\end{equation}
	
\end{rem}

\bigskip

The occupation density formula \eqref{Eq : occupation density3} can be used to characterize the finiteness of the integral functional $\int_0^t f(S_u) d[S]^p_u$ for $S \in \pazocal{L}^c_p(\pi)$, in the spirit of the Engelbert-Schmidt zero-one law; see, for example, Proposition~3.6.27 of \cite{KS1}.

\medskip

\begin{thm}
	Let $S \in C([0, T], \mathbb{R})$ be a function in $\pazocal{L}^c_p(\pi)$, and let $I$ be an interval in $\mathbb{R}$, satisfying $S_t \in I$ for $t \in [0, T]$. Suppose for every $a \in I$, that, there exists a positive number $T_a > 0$ such that $L^{p, c}_{T_a}(a)>0$ holds. Then, for a Borel measurable $f : \mathbb{R} \rightarrow [0, \infty)$, the following are equivalent:
	\begin{enumerate}[(i)]
		\item $\int_0^t f(S_u)d[S]^p_u < \infty$,~ for every $t \in [0, T]$.
		\item $f$ is locally integrable on $I$.
	\end{enumerate}	
\end{thm}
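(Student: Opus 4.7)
The plan is to first upgrade the occupation density identity \eqref{Eq : occupation density3} from Borel indicators to an arbitrary nonnegative Borel measurable function $f$, so that
\begin{equation*}
    \int_0^t f(S_u) \, d[S]^p_u \;=\; p \int_{\mathbb{R}} f(x) L_t^{p, c}(x) \, dx
\end{equation*}
holds for every $t \in [0,T]$. This comes for free from \eqref{Eq : occupation density3} by linearity (approximate $f$ by nonnegative simple functions) together with the Monotone Convergence Theorem on both sides. Since $S_u \in I$ for every $u$, and since $L_t^{p, c}(\cdot)$ is supported on $[m_t, M_t] \subset I$ (see \eqref{Def : mM}), the effective domain of integration on both sides is $I$.

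For the direction (ii) $\Rightarrow$ (i), the idea is that $L^{p, c}_t(\cdot)$ is continuous with compact support in $I$, hence bounded. For fixed $t \in [0,T]$, local integrability of $f$ on $I$ gives $\int_{[m_t, M_t]} f(x) \, dx < \infty$, and so the identity above yields
\begin{equation*}
    \int_0^t f(S_u)\, d[S]^p_u \;\leq\; p \, \|L^{p, c}_t\|_\infty \int_{[m_t, M_t]} f(x) \, dx \;<\; \infty.
\end{equation*}

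The substantive direction is (i) $\Rightarrow$ (ii), which I expect to be the main obstacle, since we need to convert the uniform positivity hypothesis at individual points $a \in I$ into a \emph{uniform} lower bound on compact subsets. Fix a compact $K \subset I$. For each $a \in K$, the hypothesis furnishes $T_a \in (0,T]$ with $L_{T_a}^{p, c}(a) > 0$; by the joint continuity of $(t,x) \mapsto L_t^{p,c}(x)$, there exist an open neighborhood $V_a$ of $a$ and a constant $\varepsilon_a > 0$ with $L_{T_a}^{p, c}(x) > \varepsilon_a$ for all $x \in V_a$. Extract a finite subcover $V_{a_1}, \dots, V_{a_N}$ of $K$, set $T^* := \max_{1 \leq i \leq N} T_{a_i} \in (0,T]$, and use the monotonicity of $t \mapsto L_t^{p, c}(x)$ (clear from the defining sums in \eqref{def : local time2}) to conclude that
\begin{equation*}
    L^{p, c}_{T^*}(x) \;\geq\; \min_{1 \leq i \leq N} \varepsilon_{a_i} \;=:\; c_K \;>\; 0 \qquad \text{for every } x \in K.
\end{equation*}
Plugging into the extended occupation density formula at $t = T^*$ gives
\begin{equation*}
    \int_0^{T^*} f(S_u)\, d[S]^p_u \;=\; p \int_I f(x) L^{p, c}_{T^*}(x)\, dx \;\geq\; p \, c_K \int_K f(x)\, dx,
\end{equation*}
and the assumed finiteness of the left-hand side forces $\int_K f(x)\, dx < \infty$. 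Since $K \subset I$ was arbitrary compact, $f$ is locally integrable on $I$, completing the proof.
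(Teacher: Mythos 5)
Your proof is correct and follows essentially the same route as the paper: extend the occupation density formula from indicators to nonnegative Borel $f$, deduce (ii) $\Rightarrow$ (i) from the boundedness and compact support of $L^{p,c}_t(\cdot)$ on $[m_t, M_t]$, and prove (i) $\Rightarrow$ (ii) by using the positivity hypothesis together with continuity of the local time to obtain a lower bound on neighborhoods, then cover a compact set. The only cosmetic difference is that you unify the times $T_{a_i}$ into a single $T^*$ via monotonicity in $t$, whereas the paper argues integrability near each $a$ at its own time $t_0$ and only then invokes a finite subcover; both are valid and equivalent.
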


\begin{proof}
	First, assume \textit{(i)} holds and choose $t_0 > T_a$ satisfying $L^{p, c}_{t_0}(a) > 0$ and $\int_0^{t_0} f(S_u)d[S]^p_u < \infty$ for a given, fixed $a \in I$. By the continuity of $L_{t_0}^{p, c}(\cdot)$, there exist real numbers $\epsilon > 0$ and $c > 0$ such that $L_{t_0}^{p, c}(x) \geq c$ holds for every $x \in I \cap \{y : |y-a| < \epsilon\}$. Then, from the occupation density formula,
	\begin{equation*}
	\infty > \int_0^{t_0} f(S_u)d[S]^p_u = p \int_{\mathbb{R}} f(x)L_{t_0}^{p, c}(x)dx
	\geq cp \int_{I \cap \{|x-a| < \epsilon\}} f(x)dx,
	\end{equation*}
	holds. Thus, $f$ is integrable on $I \cap \{y : |y-a| < \epsilon\}$ for every $a \in I$; and since we can cover $I$ with a finite number of such neighborhoods in $I$, $f$ is locally integrable on $I$.
	
	\smallskip
	
	Next, we assume that $f$ is locally integrable on $I$. For any $t \in [0, T]$, in the notation of \eqref{Def : mM}, we observe
	\begin{align*}
	\int_0^{t} f(S_u) d[S]^p_u
	&= \int_0^{t} f(S_u)\mathbbm{1}_{[m_t, M_t]}(S_u)d[S]^p_u 
	= p \int_{m_t}^{M_t} f(x)L_{t}^{p, c}(x)dx 
	\\
	&\leq p \Big(\max_{m_t \leq x \leq M_t}L_t^{p, c}(x)\Big) \int_{m_t}^{M_t}f(x)dx < \infty,
	\end{align*}
	again by the occupation density formula \eqref{Eq : occupation density3} and the continuity of the local time.
	
\end{proof}

\bigskip

\bigskip

\section{Some Identities for Local Times}
\label{sec: 4}

Using the Tanaka-Meyer formulae \eqref{Eq : tanaka-meyer1}, \eqref{Eq : tanaka-meyer2}, and \eqref{Eq : tanaka-meyer3}, we can establish several useful identities involving local times of continuous functions with finite $p$-th variation in a pathwise manner. Each of these identities corresponds to an identity involving local times for continuous semimartingales. As we will work with continuous local times in the manner of Definition~\ref{Def : local time2}, we first fix throughout this section an even integer $p \in \mathbb{N}$ and a sequence of partitions $\pi$ of $[0, T]$, and consider a continuous local time of order $p$ for a continuous function $X \in C([0, T], \mathbb{R})$ along $\pi$. In order to simplify notation, we shall write throughout $L_t^{X, p}(\cdot)$ instead of $L^{p, c}_t(\cdot)$ for this \textit{continuous} local time (of order $p$ along $\pi$), or write $L_t^X(\cdot)$ whenever the order $p$ is fixed and apparent from the context.

\smallskip

First, we have the following identity.
 
\medskip

\begin{lem}	\label{Lem : eq1}
	For any nonnegative function $X \in V_p(\pi)$ with continuous local time $L_t^X(\cdot)$, the identity
	\begin{equation}	\label{Eq : local time when nonnegative}
		L^X_t(0) = \int_0^t\mathbbm{1}_{\{X_u=0\}}d[X]^{p-1}_u
	\end{equation}
	holds, where the last integral stands for the limit
	\begin{align*}
		\int_0 ^t \mathbbm{1}_{\{X_u=0\}}d[X]^{p-1}_u 
		&:= \lim_{n \rightarrow \infty} \sum_{\substack{[t_j, t_{j+1}] \in \pi_n \\ t_j \leq t}} \mathbbm{1}_{\{0\}}(X_{t_j}) (X_{t_{j+1}}-X_{t_j})^{p-1}
		\\
		&~=\lim_{n \rightarrow \infty} \sum_{\substack{[t_j, t_{j+1}] \in \pi_n \\ t_j \leq t}} \mathbbm{1}_{\{0\}}(X_{t_j}) (X_{t_{j+1}})^{p-1}.
	\end{align*}
\end{lem}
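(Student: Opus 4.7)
The plan is to deduce the identity directly from the Tanaka-Meyer formula \eqref{Eq : tanaka-meyer1} of Corollary~\ref{Cor : Tanaka-Meyer}, specialized to $S = X$ and $a = 0$. Since $X$ is assumed nonnegative, we have $(X_t - 0)^+ = X_t$ and $(X_0 - 0)^+ = X_0$, so \eqref{Eq : tanaka-meyer1} becomes
\begin{equation*}
L^X_t(0) = (X_t)^{p-1} - (X_0)^{p-1} - \int_0^t \mathbbm{1}_{(0, \infty)}(X_u)\nabla_p(X_u)^{+} dX_u,
\end{equation*}
where by \eqref{Eq : limit tanaka-meyer1} the integral is the pointwise limit
\begin{equation*}
\lim_{n \rightarrow \infty} \sum_{\substack{[t_j, t_{j+1}] \in \pi_n \\ t_j \leq t}} \mathbbm{1}_{(0, \infty)}(X_{t_j}) \bigl\{(X_{t_{j+1}})^{p-1} - (X_{t_j})^{p-1}\bigr\}.
\end{equation*}

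Next, I would write $(X_t)^{p-1} - (X_0)^{p-1}$ as the telescoping sum $\sum_{[t_j, t_{j+1}] \in \pi_n, \, t_j \leq t} \bigl\{(X_{t_{j+1}})^{p-1} - (X_{t_j})^{p-1}\bigr\}$ (here I use that $X$ is continuous and $T = t^n_{N(\pi_n)}$ lies in every partition, together with the fact that the last sub-interval straddling $t$ contributes a vanishing correction since $osc(X, \pi_n) \to 0$). Combining this telescoping identity with the discrete form of the F\"ollmer integral above and using that $X \geq 0$, so that $1 - \mathbbm{1}_{(0, \infty)}(X_{t_j}) = \mathbbm{1}_{\{X_{t_j} = 0\}}$, gives
\begin{equation*}
L^X_t(0) = \lim_{n \rightarrow \infty} \sum_{\substack{[t_j, t_{j+1}] \in \pi_n \\ t_j \leq t}} \mathbbm{1}_{\{0\}}(X_{t_j}) \bigl\{(X_{t_{j+1}})^{p-1} - (X_{t_j})^{p-1}\bigr\}.
\end{equation*}
On the set where $\mathbbm{1}_{\{0\}}(X_{t_j}) = 1$, the subtracted term $(X_{t_j})^{p-1}$ vanishes, so this is precisely the definition of $\int_0^t \mathbbm{1}_{\{X_u = 0\}} d[X]^{p-1}_u$ given in the statement, completing the identification.

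There is no substantial obstacle: the existence of both limits is guaranteed by Corollary~\ref{Cor : Tanaka-Meyer} (the F\"ollmer-type integral converges, and the telescoping sum converges trivially by continuity of $X$), so the only thing to be careful about is the bookkeeping near the right endpoint of the interval $[0, t]$ if $t$ is not itself a partition point. This can be handled by the standard device of noting that the contribution of the partial final sub-interval vanishes as $osc(X, \pi_n) \to 0$, which is already implicit in how the F\"ollmer integral is defined as a limit of sums over $\{t_j \leq t\}$.
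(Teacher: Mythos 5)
Your proposal is correct and follows essentially the same route as the paper's proof: specialize the Tanaka-Meyer formula \eqref{Eq : tanaka-meyer1}, \eqref{Eq : limit tanaka-meyer1} to $a=0$, write the difference $X_t^{p-1}-X_0^{p-1}$ as a telescoping sum, combine the two limits, and use nonnegativity to replace $1-\mathbbm{1}_{(0,\infty)}(X_{t_j})$ with $\mathbbm{1}_{\{0\}}(X_{t_j})$. Your extra remarks about the endpoint bookkeeping and the existence of the limits are sound but go slightly beyond what the paper spells out, without changing the argument.
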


\begin{proof}
	Because of the nonnegativity of $X$, the pathwise Tanaka-Meyer formula of \eqref{Eq : tanaka-meyer1}, \eqref{Eq : limit tanaka-meyer1} for $a=0$ becomes
	\begin{align*}
		L^X_t(0) &= X_t^{p-1} - X_0^{p-1} -\int_0^t \mathbbm{1}_{(0, \infty)}(X_u)\nabla_p(X_u)^{+}dX_u
		\\
		&=\lim_{n \rightarrow \infty} \sum_{\substack{[t_j, t_{j+1}] \in \pi_n \\ t_j \leq t}} \big(X_{t_{j+1}}^{p-1}-X_{t_j}^{p-1}\big) - \int_0^t \mathbbm{1}_{(0, \infty)}(X_u)\nabla_p(X_u)^{+}dX_u
		\\
		&=\lim_{n \rightarrow \infty} \sum_{\substack{[t_j, t_{j+1}] \in \pi_n \\ t_j \leq t}} \big(1-\mathbbm{1}_{(0, \infty)}(X_{t_j})\big) \big(X_{t_{j+1}}^{p-1}-X_{t_j}^{p-1}\big)
		=\lim_{n \rightarrow \infty} \sum_{\substack{[t_j, t_{j+1}] \in \pi_n \\ t_j \leq t}} \mathbbm{1}_{\{0\}}(X_{t_j}) (X_{t_{j+1}})^{p-1},
	\end{align*}
	which is the desired result. The second equality just uses the telescoping sum representation, and the third equality follows from \eqref{Eq : limit tanaka-meyer1} for $a=0$.
\end{proof}

\medskip

\begin{rem}
	When $Z$ is a nonnegative semimartingale on a probability space and $p=2$, the equation \eqref{Eq : local time when nonnegative} is just the well-known identity in semimartingale theory
	\begin{equation*}
		L_t^Z(0) = \int_0^t\mathbbm{1}_{\{Z_u=0\}}dZ_u.
	\end{equation*}
\end{rem}

\bigskip

\begin{rem}	\label{rem : also p-th variation}
	It is quite straightforward to show that if $X, Y \in V_p(\pi)$, then $X^+, X^-, X \wedge Y, X \vee Y, X \pm Y$ are all in $V_p(\pi)$, on the strength of Lemma~\ref{Lem : V} and the elementary inequality $|x+y|^p \leq 2^{p-1}(|x|^p+|y|^p)$. By the same token, we can easily show that $X, Y \in \pazocal{L}_p^c(\pi)$ implies $X^+, X^-, X \wedge Y, X \vee Y \in \pazocal{L}_p^c(\pi)$.
\end{rem}

\bigskip

Next, we offer another representation of continuous local times.
 
\medskip
 
\begin{lem} \label{Lem : max}
	For any continuous function $X \in V_p(\pi)$ with continuous local time $L_t^X(\cdot)$, the equation
	\begin{equation}	\label{Eq : local time}
		L^X_t(0) = L^{X^+}_t(0) = \int_0^t\mathbbm{1}_{\{X_u=0\}}d[X^+]^{p-1}_u
	\end{equation}
	holds, where the integral stands for the following limit
	\begin{align*}
	\int_0 ^t \mathbbm{1}_{\{X_u=0\}}d[X^+]^{p-1}_u &:= \lim_{n \rightarrow \infty} \sum_{\substack{[t_j, t_{j+1}] \in \pi_n \\ t_j \leq t}} \mathbbm{1}_{\{0\}}(X_{t_j}) \big(X^+_{t_{j+1}}-X^+_{t_j}\big)^{p-1}
	\\
	&=\lim_{n \rightarrow \infty} \sum_{\substack{[t_j, t_{j+1}] \in \pi_n \\ t_j \leq t}} \mathbbm{1}_{\{0\}}(X_{t_j}) \big(X^+_{t_{j+1}}\big)^{p-1},
	\end{align*}
	in a manner similar to the integral in \eqref{Eq : local time when nonnegative}.
\end{lem}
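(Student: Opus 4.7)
The plan is to derive the rightmost equality from Lemma~\ref{Lem : eq1} applied to the nonnegative function $X^+$, and the leftmost equality from subtracting two instances of the Tanaka-Meyer identity~\eqref{Eq : tanaka-meyer1} at level $a=0$. By Remark~\ref{rem : also p-th variation}, both $X^+$ and $X^-$ lie in $\pazocal{L}^c_p(\pi)$. The crucial preliminary observation I will exploit is that for any nonnegative $Y\in\pazocal{L}^c_p(\pi)$ the discrete local time $L^{Y,\pi_n,p}_t(0)\equiv 0$: since $Y\geq 0$, the indicator $\mathbbm{1}_{\llparenthesis Y_{t_j},Y_{t_{j+1}}\rrbracket}(0)$ appearing in~\eqref{def : local time2} vanishes for every partition index (the point $0$ cannot belong to a left-open interval whose endpoints are both nonnegative), so $L^{Y,p,c}_t(0) = 0$.

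For the rightmost equality, Lemma~\ref{Lem : eq1} applied to $Y=X^+$ combined with the preliminary observation gives
\[
	0 = L^{X^+}_t(0) = \int_0^t \mathbbm{1}_{\{X^+_u=0\}}\,d[X^+]^{p-1}_u = \lim_{n\to\infty}\sum_{\substack{[t_j, t_{j+1}] \in \pi_n \\ t_j \leq t}} \mathbbm{1}_{\{X_{t_j}\leq 0\}}\bigl(X^+_{t_{j+1}}\bigr)^{p-1},
\]
using $\{X^+_u=0\}=\{X_u\leq 0\}$. Since the sub-sum restricted to $\{X_{t_j}=0\}\subset\{X_{t_j}\leq 0\}$ is a nonnegative portion of this vanishing limit, it too tends to zero, yielding the rightmost identity $L^{X^+}_t(0) = \int_0^t \mathbbm{1}_{\{X_u=0\}}\,d[X^+]^{p-1}_u$.

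For the leftmost equality, I write~\eqref{Eq : tanaka-meyer1} at $a=0$ once with $S=X$ and once with $S=X^+$. The common leading term $((X_t)^+)^{p-1}-((X_0)^+)^{p-1}$ cancels under subtraction, and using $X^+_{t_j}=X_{t_j}$ on $\{X_{t_j}>0\}$ together with $X_{t_{j+1}}^{p-1}-(X^+_{t_{j+1}})^{p-1}=-|X_{t_{j+1}}|^{p-1}$ on $\{X_{t_{j+1}}<0\}$ (valid because $p-1$ is odd), the difference of the two formulas reduces to
\[
	L^X_t(0) - L^{X^+}_t(0) = \lim_{n\to\infty}\sum_{\substack{[t_j, t_{j+1}] \in \pi_n \\ t_j \leq t}} \mathbbm{1}_{\{X_{t_j}>0,\,X_{t_{j+1}}<0\}}|X_{t_{j+1}}|^{p-1}.
\]
Applying the preliminary observation to $Y=X^-$ yields $L^{X^-}_t(0)=0$, so Lemma~\ref{Lem : eq1} applied to $X^-$ gives $\lim_n\sum_j \mathbbm{1}_{\{X_{t_j}\geq 0\}}(X^-_{t_{j+1}})^{p-1}=0$; this nonnegative limit dominates the right-hand side above, so the difference vanishes and $L^X_t(0) = L^{X^+}_t(0)$. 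The main delicate point is the preliminary observation about the convention in~\eqref{def : local time2}; once secured, the proof is a clean algebraic mirror-image argument between $X^+$ and $X^-$.
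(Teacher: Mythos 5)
Your ``crucial preliminary observation'' is where the proof breaks down. You argue that for any nonnegative $Y$, the indicator $\mathbbm{1}_{\llparenthesis Y_{t_j},Y_{t_{j+1}}\rrbracket}(0)$ in~\eqref{def : local time2} vanishes for every partition index, and hence $L^{Y,p,c}_t(0)\equiv 0$. Under a strict reading of the displayed convention this would follow (the interval is open at its smaller endpoint, so $0$ never lies in it when both endpoints are nonnegative). But pushing this through gives $L^{X^+}_t(0)=L^{X^-}_t(0)\equiv 0$ for \emph{every} $X$, and since the lemma asserts $L^X_t(0)=L^{X^+}_t(0)$ you would conclude $L^X_t(0)\equiv 0$ for every $X\in\pazocal{L}^c_p(\pi)$. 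That cannot be the intended content: for $p=2$ the quantity $L^{p,c}_t(0)$ is meant to coincide with classical semimartingale local time, which is generically strictly positive for a nonnegative path such as reflected Brownian motion. The observation therefore either rests on a misreading of the convention in Definition~\ref{Def : local time2}, or it exposes a boundary-case inconsistency in how the paper handles the set $\{S_{t_j}=a\}$; in either case it cannot serve as the engine of the proof. Everything downstream in your argument (``$0=L^{X^+}_t(0)=\cdots$'' for the rightmost equality, and ``the difference vanishes because $L^{X^-}_t(0)=0$'' for the leftmost) inherits this problem: you end up proving only $0=0=0$ rather than a substantive identity between generically nonzero quantities.

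The paper's proof takes a genuinely different route that avoids reducing to this observation. It applies the single-interval discrete Tanaka identity~\eqref{Eq : change of variable} with $f(x)=(x^+)^{p-1}$ to obtain a per-interval decomposition whose last term is exactly $L^{X,\pi_n}_{t_{j+1}}(0)-L^{X,\pi_n}_{t_j}(0)$, then multiplies through by $\mathbbm{1}_{(-\infty,0]}(X_{t_j})$ and $\mathbbm{1}_{(-\infty,0)}(X_{t_j})$ to isolate the local-time increments, and finally identifies the resulting limits with Lebesgue--Stieltjes integrals by appealing to the first identity in~\eqref{Eq : Equivalent LS} of Proposition~3.1. No step in that argument asserts that $L^{X^+}_t(0)$ vanishes identically. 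Before attempting this lemma again, revisit the convention behind $\llparenthesis\cdot,\cdot\rrbracket$ and reconcile it with the Tanaka--Meyer formula of Corollary~\ref{Cor : Tanaka-Meyer}, where the contribution from the set $\{S_{t_j}=a\}$ is what makes $L^{p,c}_t(a)$ nontrivial.
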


\begin{proof}
	For any interval $[t_j, t_{j+1}] \in \pi_n$, using the equation \eqref{Eq : change of variable} with the function $f(x) = (x^+)^{p-1}$ evaluated at $X_{t_{j+1}}$ and $X_{t_j}$, we obtain
	\begin{equation*}
		(X^+_{t_{j+1}})^{p-1} = (X^+_{t_{j}})^{p-1} + \sum_{k=1}^{p-1}\mathbbm{1}_{(0, \infty)}(X_{t_j}) \binom{p-1}{k} \big(X_{t_j}^+\big)^{p-1-k}\big(X_{t_{j+1}}^+-X_{t_j}^+\big)^k + L_{t_{j+1}}^{X, \pi_n}(0)-L_{t_{j}}^{X, \pi_n}(0)
	\end{equation*}
	as in the proof of Corollary~\ref{Cor : Tanaka-Meyer}. Now, if we multiply $\mathbbm{1}_{(-\infty, 0]}(X_{t_j})$, and $\mathbbm{1}_{(-\infty, 0)}(X_{t_j})$, respectively, on both sides, all terms on the right side except the local times terms vanish:
	\begin{equation}	\label{Eq : eq1}
		 \mathbbm{1}_{(-\infty, 0]}(X_{t_j})(X^+_{t_{j+1}})^{p-1} = \mathbbm{1}_{(-\infty, 0]}(X_{t_j}) \Big(L_{t_{j+1}}^{X, \pi_n}(0)-L_{t_{j}}^{X, \pi_n}(0) \Big),
	\end{equation}
	and
	\begin{equation}	\label{Eq : eq2}
	\mathbbm{1}_{(-\infty, 0)}(X_{t_j})(X^+_{t_{j+1}})^{p-1} = \mathbbm{1}_{(-\infty, 0)}(X_{t_j}) \Big(L_{t_{j+1}}^{X, \pi_n}(0)-L_{t_{j}}^{X, \pi_n}(0) \Big).
	\end{equation}
	Applying Lemma~\ref{Lem : eq1} to the nonnegative path $X^+$, and using \eqref{Eq : eq1}, we obtain
	\begin{align}
		L_t^{X^+}(0) &= \lim_{n \rightarrow \infty} \sum_{\substack{[t_j, t_{j+1}] \in \pi_n \\ t_j \leq t}} \mathbbm{1}_{\{0\}}(X_{t_j}^+) (X_{t_{j+1}}^+)^{p-1} = \lim_{n \rightarrow \infty} \sum_{\substack{[t_j, t_{j+1}] \in \pi_n \\ t_j \leq t}} \mathbbm{1}_{(-\infty, 0]}(X_{t_j}) (X_{t_{j+1}}^+)^{p-1}		\label{Eq : eq3}
		\\
		&= \lim_{n \rightarrow \infty} \sum_{\substack{[t_j, t_{j+1}] \in \pi_n \\ t_j \leq t}} \mathbbm{1}_{(-\infty, 0]}(X_{t_j}) \Big(L^{X, \pi_n}_{t_{j+1}}(0)-L^{X, \pi_n}_{t_{j}}(0)\Big)		\nonumber
		\\
		&=\lim_{n \rightarrow \infty} \sum_{\substack{[t_j, t_{j+1}] \in \pi_n \\ t_j \leq t}} \mathbbm{1}_{(-\infty, 0]}(X_{t_j}) \mathbbm{1}_{\{0 \}}(X_{t_j}) \Big(L^{X, \pi_n}_{t_{j+1}}(0)-L^{X, \pi_n}_{t_{j}}(0)\Big)		\nonumber
		\\
		&=\lim_{n \rightarrow \infty} \sum_{\substack{[t_j, t_{j+1}] \in \pi_n \\ t_j \leq t}} \mathbbm{1}_{\{0 \}}(X_{t_j}) \Big(L^{X, \pi_n}_{t_{j+1}}(0)-L^{X, \pi_n}_{t_{j}}(0)\Big)	\label{Eq : eq3.5}
	\end{align}
	where the second last equality follows from \eqref{Eq : Equivalent LS}. The limit of the summation in \eqref{Eq : eq3.5} represents the Lebesgue-Stieltjes integral which is equal to $L_t^X(0)$ in \eqref{Eq : Equivalent LS}, thus, we established $L_t^{X+}(0) = L_t^X(0)$. Similarly, starting from \eqref{Eq : eq2} and proceeding as before, we obtain
	\begin{align}
		\mathbbm{1}_{(-\infty, 0)}(X_{t_j}) (X_{t_{j+1}}^+)^{p-1}
		&= \mathbbm{1}_{(-\infty, 0)}(X_{t_j}) \Big(L^{X, \pi_n}_{t_{j+1}}(0)-L^{X, \pi_n}_{t_{j}}(0)\Big)		\nonumber
		\\
		&= \mathbbm{1}_{(-\infty, 0)}(X_{t_j}) \mathbbm{1}_{\{0 \}}(X_{t_j}) \Big(L^{X, \pi_n}_{t_{j+1}}(0)-L^{X, \pi_n}_{t_{j}}(0)\Big)
		= 0.		\label{Eq : eq4}
	\end{align}
	Thus, from \eqref{Eq : eq4}, the right-hand side of \eqref{Eq : eq3} is equal to 
	\begin{equation*}
		\lim_{n \rightarrow \infty} \sum_{\substack{[t_j, t_{j+1}] \in \pi_n \\ t_j \leq t}} \mathbbm{1}_{\{0\}}(X_j) (X_{t_{j+1}}^+)^{p-1} = \int_0 ^t \mathbbm{1}_{\{X_u=0\}}d[X^+]^{p-1}_u,
	\end{equation*}
	and the claim \eqref{Eq : local time} follows.
\end{proof}

\medskip

We also have the following twin lemma of Lemma~\ref{Lem : max}.

\medskip

\begin{lem} \label{Lem : min}
	For a function $X$ satisfying the assumptions of Lemma~\ref{Lem : max}, the equation
	\begin{equation}	\label{Eq : local time2}
		L^X_t(0) = L^{X^-}_t(0) = \int_0^t\mathbbm{1}_{\{X_u=0\}}d[X^-]^{p-1}_u
	\end{equation}
	holds, where the integral stands for the following limit
	\begin{equation*}
		\int_0 ^t \mathbbm{1}_{\{X_u=0\}}d[X^-]^{p-1}_u := \lim_{n \rightarrow \infty} \sum_{\substack{[t_j, t_{j+1}] \in \pi_n \\ t_j \leq t}} \mathbbm{1}_{\{0\}}(X_{t_j}) (X^-_{t_{j+1}})^{p-1}.
	\end{equation*}
\end{lem}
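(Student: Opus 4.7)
The plan is to obtain \eqref{Eq : local time2} by applying the preceding Lemma~\ref{Lem : max} to the function $-X$, exploiting the evident symmetry $(-X)^+ = X^-$. The alternative would be to grind through a verbatim rewrite of the proof of Lemma~\ref{Lem : max}, replacing the Tanaka--Meyer formula \eqref{Eq : tanaka-meyer1} by its twin \eqref{Eq : tanaka-meyer2} and the test function $f(x) = (x^+)^{p-1}$ by $g(x) = (x^-)^{p-1}$, but the negation trick avoids repeating the entire chain of indicator manipulations.

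First I would record the behaviour of the discrete local times under negation: substituting $(-X_{t_j}, -X_{t_{j+1}})$ into \eqref{def : local time2} and using $\mathbbm{1}_{\llparenthesis -a, -b \rrbracket}(x) = \mathbbm{1}_{\llparenthesis a, b \rrbracket}(-x)$ together with $|{-b}-x| = |b-(-x)|$ gives
\begin{equation*}
L_t^{-X, \pi_n; p}(x) \,=\, L_t^{X, \pi_n; p}(-x), \qquad x \in \mathbb{R}.
\end{equation*}
This, together with the identity $|S(t_{j+1}) - S(t_j)|^p = |{-S(t_{j+1})} + S(t_j)|^p$ applied in \eqref{def : p-th}, shows that negation preserves membership in both $V_p(\pi)$ and $\pazocal{L}_p^c(\pi)$, that the limiting continuous local time satisfies $L_t^{-X, p, c}(x) = L_t^{X, p, c}(-x)$, and in particular $L_t^{-X}(0) = L_t^X(0)$.

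Next I would apply Lemma~\ref{Lem : max} with $-X$ in place of $X$. Since $(-X)^+ = X^-$ and $\{-X_u = 0\} = \{X_u = 0\}$, the conclusion of that lemma reads
\begin{equation*}
L_t^{-X}(0) \,=\, L_t^{X^-}(0) \,=\, \int_0^t \mathbbm{1}_{\{X_u = 0\}} \, d[X^-]^{p-1}_u,
\end{equation*}
where the integral on the right is the pointwise limit of the compensated Riemann sums stated in the lemma. Combining this with $L_t^{-X}(0) = L_t^X(0)$ from the previous step chains the three quantities in \eqref{Eq : local time2}.

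The main (and only) obstacle is the cosmetic verification that negation acts on the local time purely by reflection of its spatial variable; once this is in place the rest is a one-line reduction to Lemma~\ref{Lem : max}. Should one prefer the direct route, the key point to check is that in the Tanaka--Meyer identity \eqref{Eq : tanaka-meyer2} the telescoping sum representation analogous to \eqref{Eq : limit tanaka-meyer1} holds with $\mathbbm{1}_{(a,\infty)}$ replaced by $\mathbbm{1}_{(-\infty, a)}$, after which the indicator computations $\mathbbm{1}_{[0,\infty)}(X_{t_j})(X^-_{t_{j+1}})^{p-1}$ and $\mathbbm{1}_{(0,\infty)}(X_{t_j})(X^-_{t_{j+1}})^{p-1}$ mirror \eqref{Eq : eq1}--\eqref{Eq : eq4} verbatim by symmetry.
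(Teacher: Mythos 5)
Your high-level plan --- apply Lemma~\ref{Lem : max} to $-X$ and exploit $(-X)^+ = X^-$ --- is exactly the paper's. What differs is how you justify the preliminary fact $L^{-X}_t(0) = L^X_t(0)$, and there your argument has a genuine gap. The claimed symmetry $\mathbbm{1}_{\llparenthesis -a, -b \rrbracket}(x) = \mathbbm{1}_{\llparenthesis a, b \rrbracket}(-x)$ is false: by convention $\llparenthesis a, b \rrbracket = (a\wedge b,\, a\vee b]$ is open at the minimum and closed at the maximum, so $\llparenthesis -a,-b\rrbracket = (-(a\vee b), -(a\wedge b)]$, whereas the reflected set $\{-y : y\in \llparenthesis a,b\rrbracket\}$ is $[-(a\vee b), -(a\wedge b))$; the two disagree at both endpoints. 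Consequently $L_t^{-X, \pi_n; p}(x) \neq L_t^{X, \pi_n; p}(-x)$ in general, and at $x=0$ the discrepancy works out to
\begin{equation*}
L_t^{-X, \pi_n; p}(0) - L_t^{X, \pi_n; p}(0) = \sum_{\substack{[t_j, t_{j+1}] \in \pi_n \\ t_j \leq t}} \mathbbm{1}_{\{0\}}(X_{t_j})\,X_{t_{j+1}}^{p-1},
\end{equation*}
which is not obviously negligible --- indeed the paper proves it vanishes in the limit only \emph{after} establishing Lemmas~\ref{Lem : max} and \ref{Lem : min} together, so invoking that fact here would be circular.

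The conclusion can still be rescued: for each $n$ the exceptional $x$'s form a finite set (contained in $\{-X(s): s\in\pi_n\}$), so their union over $n$ is countable; off that set your discrete identity holds exactly, hence $L_t^{-X,p,c}(x)=L_t^{X,p,c}(-x)$ on a dense set, and joint continuity of the limiting local times extends the equality to all $x$, in particular $x=0$. You should state this step. The paper instead sidesteps the bracket bookkeeping entirely: it applies the occupation density formula \eqref{Eq : occupation density3} over the symmetric interval $[-\epsilon,\epsilon]$, uses $[X]^p=[-X]^p$ (immediate from Lemma~\ref{Lem : V}) to get $\int_{-\epsilon}^{\epsilon}L_t^X(x)\,dx=\int_{-\epsilon}^{\epsilon}L_t^{-X}(x)\,dx$ for every $\epsilon>0$, and then lets $\epsilon\to 0$ using continuity. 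Given the half-open pitfall you hit, that is the cleaner route.
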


\begin{proof}
	We show first that $L^X_t(0) = L^{-X}_t(0)$ holds. For any $\epsilon >0$, from Definition~\ref{Def : local time2}, we have
	\begin{equation*}
		p\int_{-\epsilon}^{\epsilon} L_t^X(x)dx
		= \int_0^t \mathbbm{1}_{[-\epsilon, \epsilon]}(X_u)d[X]^p_u
		= \int_0^t \mathbbm{1}_{[-\epsilon, \epsilon]}(-X_u)d[-X]^p_u
		= p\int_{-\epsilon}^{\epsilon} L_t^{-X}(x)dx,
	\end{equation*}
	where the second equality used the fact that the $p$-th variation of $X$ and $-X$ are equal, from Lemma~\ref{Lem: 1}. The continuity of $L_t^X(\cdot)$ and $L_t^{-X}(\cdot)$ gives
	\begin{equation*}
		L_t^X(0)
		= \lim_{\epsilon \rightarrow 0} \frac{1}{2\epsilon} \int_{-\epsilon}^{\epsilon} L_t^X(x)dx
		= \lim_{\epsilon \rightarrow 0} \frac{1}{2\epsilon} \int_{-\epsilon}^{\epsilon} L_t^{-X}(x)dx
		= L_t^{-X}(0).
	\end{equation*}	
	Then, we apply Lemma~\ref{Lem : max} with the simple identity $(-X)^+ = X^-$ to obtain
	\begin{equation*}
		L^{(-X)}_t(0) = L^{X^-}_t(0) = \int_0^t\mathbbm{1}_{\{X_u=0\}}d[X^-]^{p-1}_u,
	\end{equation*}
	and the result follows.
\end{proof}

\medskip

\begin{rem}
	Subtracting the two equations \eqref{Eq : local time} and \eqref{Eq : local time2} side by side, we obtain
	\begin{align*}
		0 &=\int_0^t\mathbbm{1}_{\{X_u=0\}}d[X^+]^{p-1}_u - \int_0^t\mathbbm{1}_{\{X_u=0\}}d[X^-]^{p-1}_u
		\\
		&=\lim_{n \rightarrow \infty} \sum_{\substack{[t_j, t_{j+1}] \in \pi_n \\ t_j \leq t}} \mathbbm{1}_{\{0\}}(X_{t_j}) \big\{(X^+_{t_{j+1}})^{p-1} - (X^-_{t_{j+1}})^{p-1} \big\}
		\\
		&=\lim_{n \rightarrow \infty} \sum_{\substack{[t_j, t_{j+1}] \in \pi_n \\ t_j \leq t}} \mathbbm{1}_{\{0\}}(X_{t_j}) (X_{t_{j+1}})^{p-1}
		\\
		&=:\int_0^t\mathbbm{1}_{\{X_u=0\}}d[X]^{p-1}_u, \qquad 0 \leq t \leq T.
	\end{align*}
	When $X$ is a semimartingale, this generalizes the well-known $\mathbb{P}$-a.e identity $\int_0^{\infty} \mathbbm{1}_{\{X_u=0\}}d\langle X \rangle_u = 0$; see, for instance, Exercise~3.7.10 of \cite{KS1}.
\end{rem}

\bigskip

The following Theorem provides an expression for the local time of the maximum of two given continuous functions; Lemma~\ref{Lem : max} plays an essential role in its proof.

\medskip

\begin{thm}	\label{Thm : local time of max}
	If $X$, $Y$ are two continuous paths in $V_p(\pi)$ with continuous local times $L^X_t(0)$ and $L^Y_t(0)$, we have
	\begin{equation}	\label{Eq : local time of max}
		L^{X \vee Y}_t(0) = \int_0^t \mathbbm{1}_{\{Y_s < 0\}} dL^X_s(0) + \int_0^t \mathbbm{1}_{\{X_s < 0\}} dL^Y_s(0) + \int_0^t \mathbbm{1}_{\{X_s = Y_s = 0\}} d[X^+ \vee Y^+]^{p-1}_s.
	\end{equation}
\end{thm}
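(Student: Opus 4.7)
\emph{Strategy.} The plan is to apply Lemma~\ref{Lem : max} directly to the path $X \vee Y$, which belongs to $\pazocal{L}^c_p(\pi)$ by Remark~\ref{rem : also p-th variation}. Since $(X \vee Y)^+ = X^+ \vee Y^+$, Lemma~\ref{Lem : max} yields
\begin{equation*}
L^{X \vee Y}_t(0) = L^{X^+ \vee Y^+}_t(0) = \int_0^t \mathbbm{1}_{\{X_u \vee Y_u = 0\}}\, d[X^+ \vee Y^+]^{p-1}_u.
\end{equation*}
Splitting the indicator into three disjoint pieces,
\begin{equation*}
\mathbbm{1}_{\{X_u \vee Y_u = 0\}} = \mathbbm{1}_{\{X_u = 0,\, Y_u < 0\}} + \mathbbm{1}_{\{X_u < 0,\, Y_u = 0\}} + \mathbbm{1}_{\{X_u = Y_u = 0\}},
\end{equation*}
isolates the last term of \eqref{Eq : local time of max} immediately, and leaves the first two pieces to be matched with $\int_0^t \mathbbm{1}_{\{Y_s<0\}}\, dL^X_s(0)$ and its symmetric counterpart.

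\medskip

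\emph{Matching the first piece.} The key observation is that on $\{Y_u < 0\}$, continuity of $Y$ provides an open neighborhood on which $Y^+ \equiv 0$, so $X^+ \vee Y^+$ coincides with $X^+$ there; informally, $d[X^+ \vee Y^+]^{p-1}_u = d[X^+]^{p-1}_u$ on this set. Combining this with the identification $\mathbbm{1}_{\{X_u = 0\}}\, d[X^+]^{p-1}_u = dL^X_u(0)$ provided by Lemma~\ref{Lem : max}, and with the fact that $dL^X(0)$ is supported on $\{X_u = 0\}$ (see \eqref{Eq : Equivalent LS}), I expect
\begin{equation*}
\int_0^t \mathbbm{1}_{\{X_u = 0,\, Y_u < 0\}}\, d[X^+ \vee Y^+]^{p-1}_u = \int_0^t \mathbbm{1}_{\{Y_u < 0\}}\, dL^X_u(0).
\end{equation*}
The second piece will then follow by interchanging the roles of $X$ and $Y$.

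\medskip

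\emph{Rigorization and main obstacle.} To turn these heuristics into a proof, I would approximate $\mathbbm{1}_{\{Y_u < 0\}}$ from below by $\mathbbm{1}_{\{Y_u \leq -2^{-m}\}}$ and work at the level of the discrete sums. For $n$ large enough that $osc(Y,\pi_n) < 2^{-m}$, the condition $Y_{t_j} \leq -2^{-m}$ forces $Y_{t_{j+1}} < 0$ and hence $(X^+_{t_{j+1}} \vee Y^+_{t_{j+1}})^{p-1} = (X^+_{t_{j+1}})^{p-1}$, collapsing the mixed integrator to the single one and enabling Lemma~\ref{Lem : max} to be applied. The main technical obstacle is the subsequent limit in $n$ of sums like $\sum \mathbbm{1}_{\{X_{t_j}=0\}} \mathbbm{1}_{\{Y_{t_j}\leq -2^{-m}\}} (X^+_{t_{j+1}})^{p-1}$, whose dependence on $Y$ is discontinuous; I plan to address it as in the proof of Proposition~3.1, by sandwiching the step function between continuous cutoffs and exploiting uniform-in-$t$ convergence of $L^{X,\pi_n}_\cdot(0) \to L^X_\cdot(0)$, which holds by Dini's theorem since both sides are monotone in $t$ with a continuous limit. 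A final monotone convergence step in $m$ then delivers $\int_0^t \mathbbm{1}_{\{Y_u < 0\}}\, dL^X_u(0)$ and completes the identification.
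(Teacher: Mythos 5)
Your proposal takes essentially the same route as the paper's proof: apply Lemma~\ref{Lem : max} to $Z = X\vee Y$ (using $Z^+ = X^+\vee Y^+$), decompose $\mathbbm{1}_{\{Z_u=0\}}$ into the three disjoint events $\{X_u=0,\,Y_u<0\}$, $\{X_u<0,\,Y_u=0\}$, $\{X_u=Y_u=0\}$, and on the first event use the coincidence of $Z^+$ with $X^+$ (on the open set $\{Y<X\}$) together with Lemma~\ref{Lem : max} and the support property \eqref{Eq : Equivalent LS} to identify the limit with $\int_0^t\mathbbm{1}_{\{Y_u<0\}}\,dL^X_u(0)$. The paper states the coincidence step tersely (``$Z^+$ and $X^+$ coincide on $\{s : Y_s<X_s\}$''), whereas you correctly flag the subtlety that, at the level of the defining Riemann sums, the indicator is evaluated at $t_j$ while the increment involves $t_{j+1}$; your rigorization — approximating $\mathbbm{1}_{\{Y<0\}}$ by $\mathbbm{1}_{\{Y\leq -2^{-m}\}}$, choosing $n$ so that $\mathrm{osc}(Y,\pi_n)<2^{-m}$ forces $Y_{t_{j+1}}<0$ and collapses $Z^+_{t_{j+1}}$ to $X^+_{t_{j+1}}$, then passing to the limit in $m$ by monotone convergence with the Dini/Pólya-type uniformity for monotone-in-$t$ functions — is a valid way to make that step airtight and is in the same spirit as the argument the paper already uses in the proof of Proposition~3.1. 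So this is the paper's proof with the one informal step spelled out.
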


\begin{proof}
	Consider the function $Z = X \vee Y$, and use Lemma~\ref{Lem : max} to obtain the decomposition of its local time at the origin in terms of three Lebesgue-Stieltjes integrals, as follows:
	\begin{align}
		L^Z_t(0) &= \int_0^t \mathbbm{1}_{\{Z_s = 0\}}d[Z^+]^{p-1}_s	\nonumber
		\\
		&= \int_0^t \mathbbm{1}_{\{Y_s < X_s = 0\}}d[Z^+]^{p-1}_s + \int_0^t \mathbbm{1}_{\{X_s < Y_s = 0\}}d[Z^+]^{p-1}_s + \int_0^t \mathbbm{1}_{\{X_s = Y_s = 0\}}d[Z^+]^{p-1}_s.	\label{Eq : Decomp of local time}
	\end{align}
	Since two paths $Z^+$ and $X^+$ coincide on the set $\{s \in [0, T]~:~Y_s < X_s\}$, the first integral on the right-most side of \eqref{Eq : Decomp of local time} is
	\begin{equation*}
		\int_0^t \mathbbm{1}_{\{Y_s < 0\}} \mathbbm{1}_{\{X_s = 0\}}d[X^+]^{p-1}_s = \int_0^t \mathbbm{1}_{\{Y_s < 0\}} dL^X_s(0),
	\end{equation*}
	on the strength of \eqref{Eq : local time} in Lemma~\ref{Lem : max}. By the same token, the second integral on the right-most side of \eqref{Eq : Decomp of local time} is equal to
	\begin{equation*}
		\int_0^t \mathbbm{1}_{\{X_s < 0\}} dL^Y_s(0).
	\end{equation*}
	For the last integral in \eqref{Eq : Decomp of local time}, we use the fact $Z^+ = X^+ \vee Y^+$, and the result \eqref{Eq : local time of max} follows.
\end{proof}

\medskip

We have a very similar expansion for the local time of the mininum of two functions, instead of the maximum; the proof is completely analogous to that of Theorem~\ref{Thm : local time of max}.

\medskip

\begin{thm}	\label{Thm : local time of min}
	For continuous paths $X$, $Y$ in $V_p(\pi)$ as in Theorem~\ref{Thm : local time of max}, we have
	\begin{equation}	\label{Eq : local time of min}
	L^{X \wedge Y}_t(0) = \int_0^t \mathbbm{1}_{\{Y_s > 0\}} dL^X_s(0) + \int_0^t \mathbbm{1}_{\{X_s > 0\}} dL^Y_s(0) + \int_0^t \mathbbm{1}_{\{X_s = Y_s = 0\}} d[X^+ \wedge Y^+]^{p-1}_s.
	\end{equation}
\end{thm}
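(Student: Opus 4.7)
The proof will follow verbatim the structure of Theorem~\ref{Thm : local time of max}, with the roles of strict inequalities reversed. The plan is to set $Z = X \wedge Y$ (which lies in $\pazocal{L}_p^c(\pi)$ by Remark~\ref{rem : also p-th variation}), observe the pointwise identity $Z^+ = (X \wedge Y)^+ = X^+ \wedge Y^+$, and then apply Lemma~\ref{Lem : max} to $Z$ to obtain the starting decomposition
\begin{equation*}
L^Z_t(0) \; = \; L^{Z^+}_t(0) \; = \; \int_0^t \mathbbm{1}_{\{Z_s = 0\}} d[Z^+]^{p-1}_s.
\end{equation*}

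Next, I would split the indicator $\mathbbm{1}_{\{Z_s = 0\}} = \mathbbm{1}_{\{X_s \wedge Y_s = 0\}}$ into the three disjoint events $\{Y_s > X_s = 0\}$, $\{X_s > Y_s = 0\}$, and $\{X_s = Y_s = 0\}$, producing three Lebesgue-Stieltjes integrals exactly as in \eqref{Eq : Decomp of local time}. On the open set $\{Y_s > X_s\}$ we have $Z_s = X_s$ pathwise, hence $Z^+ = X^+$ in a neighborhood, so the first of these three integrals is
\begin{equation*}
\int_0^t \mathbbm{1}_{\{Y_s > 0\}} \mathbbm{1}_{\{X_s = 0\}} d[X^+]^{p-1}_s \; = \; \int_0^t \mathbbm{1}_{\{Y_s > 0\}} dL^X_s(0),
\end{equation*}
where the last equality is \eqref{Eq : local time} from Lemma~\ref{Lem : max}. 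The analogous argument with the roles of $X$ and $Y$ exchanged turns the second integral into $\int_0^t \mathbbm{1}_{\{X_s > 0\}} dL^Y_s(0)$. For the third integral, substituting the identity $Z^+ = X^+ \wedge Y^+$ directly yields the remaining term $\int_0^t \mathbbm{1}_{\{X_s = Y_s = 0\}} d[X^+ \wedge Y^+]^{p-1}_s$.

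The only subtlety, which is the same one already present in the proof of Theorem~\ref{Thm : local time of max}, is justifying the passage from $d[Z^+]^{p-1}$ to $d[X^+]^{p-1}$ (respectively $d[Y^+]^{p-1}$) on the sets where a strict inequality between $X$ and $Y$ holds. This should be read at the level of the defining Riemann sums: for any $s$ with $Y_s > X_s$ there is an open neighborhood on which $Y > X$, so once $osc(X, \pi_n) + osc(Y, \pi_n)$ is sufficiently small, the contributing summands $\mathbbm{1}_{\{Y_{t_j} > X_{t_j} = 0\}}(Z^+_{t_{j+1}})^{p-1}$ and $\mathbbm{1}_{\{Y_{t_j} > X_{t_j} = 0\}}(X^+_{t_{j+1}})^{p-1}$ coincide term by term. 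I expect this compactness/uniformity step to be the only place requiring genuine care, and it is handled in exactly the same way as in Theorem~\ref{Thm : local time of max}, which is why the theorem statement invokes that proof by analogy.
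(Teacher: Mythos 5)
Your proposal is correct and follows the same route as the paper: set $Q = X \wedge Y$, apply Lemma~\ref{Lem : max} to write $L^Q_t(0)$ as $\int_0^t \mathbbm{1}_{\{Q_s=0\}}d[Q^+]^{p-1}_s$, decompose $\{Q_s = 0\}$ into $\{Y_s > X_s = 0\}$, $\{X_s > Y_s = 0\}$, $\{X_s = Y_s = 0\}$, replace $Q^+$ by $X^+$ (resp.\ $Y^+$) on the first two sets, apply Lemma~\ref{Lem : max} again, and use $Q^+ = X^+ \wedge Y^+$ on the collision set. Your remark about justifying $d[Q^+]^{p-1} \to d[X^+]^{p-1}$ at the level of Riemann sums, via continuity and $osc$-control, spells out a step the paper's proof states only implicitly (``two paths $Q^+$ and $X^+$ coincide on the set $\{Y_s > X_s\}$''), but it is the same argument.
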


\begin{proof}
	Let $Q = X \wedge Y$ and use Lemma~\ref{Lem : max} twice, to obtain the decomposition
	\begin{align*}
	L^Q_t(0) &= \int_0^t \mathbbm{1}_{\{Q_s = 0\}}d[Q^+]^{p-1}_s
	\\
	&= \int_0^t \mathbbm{1}_{\{Y_s > X_s = 0\}}d[Q^+]^{p-1}_s + \int_0^t \mathbbm{1}_{\{X_s > Y_s = 0\}}d[Q^+]^{p-1}_s + \int_0^t \mathbbm{1}_{\{X_s = Y_s = 0\}}d[Q^+]^{p-1}_s
	\\
	&= \int_0^t \mathbbm{1}_{\{Y_s > 0\}}\mathbbm{1}_{\{X_s = 0\}}d[X^+]^{p-1}_s + \int_0^t \mathbbm{1}_{\{X_s > 0\}}\mathbbm{1}_{\{Y_s = 0\}}d[Y^+]^{p-1}_s + \int_0^t \mathbbm{1}_{\{X_s = Y_s = 0\}}d[Q^+]^{p-1}_s
	\\
	&= \int_0^t \mathbbm{1}_{\{Y_s > 0\}}dL^X_s(0) + \int_0^t \mathbbm{1}_{\{X_s > 0\}}dL^Y_s(0) + \int_0^t \mathbbm{1}_{\{X_s = Y_s = 0\}}d[X^+ \wedge Y^+]^{p-1}_s,
	\end{align*}
	as before. Note that in the last integral, we use $Q^+ = X^+ \wedge Y^+$.
\end{proof}

\medskip

Combining Theorem~\ref{Thm : local time of max} and Theorem~\ref{Thm : local time of min}, we have the following algebraic identity. This generalizes results of \citet{Yan_1985, Yan_1989}, valid for continuous semimartingales; see also \citet{Ouknine_1988, Ouknine_1990}.

\medskip

\begin{thm}	\label{Thm : min plus max}
	For continuous functions $X, Y \in V_p(\pi)$ with continuous local times $L^X_t(0)$ and $L^Y_t(0)$, respectively, we have the identity for any $t \in [0, T]$,
	\begin{equation}	\label{Eq : min plus max}
		L^{X \vee Y}_t(0) + L^{X \wedge Y}_t(0) = L^{X}_t(0) + L^{Y}_t(0).
	\end{equation}
\end{thm}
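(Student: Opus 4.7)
The plan is to simply add the two formulae from Theorems~\ref{Thm : local time of max} and \ref{Thm : local time of min} and show that the resulting sum collapses to $L^X_t(0)+L^Y_t(0)$. Writing out the sum, the first two integrals from each theorem combine to give
\begin{equation*}
\int_0^t\bigl(\mathbbm{1}_{\{Y_s<0\}}+\mathbbm{1}_{\{Y_s>0\}}\bigr)\,dL^X_s(0)+\int_0^t\bigl(\mathbbm{1}_{\{X_s<0\}}+\mathbbm{1}_{\{X_s>0\}}\bigr)\,dL^Y_s(0),
\end{equation*}
so what remains is to show that the two collision terms at $\{X_s=Y_s=0\}$ supply exactly the missing indicators $\mathbbm{1}_{\{Y_s=0\}}\,dL^X_s(0)+\mathbbm{1}_{\{X_s=0\}}\,dL^Y_s(0)$; once this is done, the three indicators on $Y_s$ (respectively $X_s$) cover all of $\mathbb{R}$ and we recover $L^X_t(0)+L^Y_t(0)$ via the first identity in \eqref{Eq : Equivalent LS}.

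The key algebraic observation is the pointwise identity
\begin{equation*}
(a\vee b)^{p-1}+(a\wedge b)^{p-1}=a^{p-1}+b^{p-1},\qquad a,b\geq 0,
\end{equation*}
which holds trivially since $\{a\vee b,a\wedge b\}=\{a,b\}$. I would apply this at the discrete level with $a=X^+_{t_{j+1}}$, $b=Y^+_{t_{j+1}}$ inside the Riemann sums defining the two collision integrals. Since $X_{t_j}=Y_{t_j}=0$ forces $X^+_{t_j}\vee Y^+_{t_j}=X^+_{t_j}\wedge Y^+_{t_j}=X^+_{t_j}=Y^+_{t_j}=0$, the increments reduce to the terminal values and the summand becomes
\begin{equation*}
\mathbbm{1}_{\{X_{t_j}=Y_{t_j}=0\}}\bigl[(X^+_{t_{j+1}})^{p-1}+(Y^+_{t_{j+1}})^{p-1}\bigr].
\end{equation*}

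The only remaining step is to identify, in the limit,
\begin{equation*}
\lim_{n\to\infty}\sum_{\substack{[t_j,t_{j+1}]\in\pi_n\\ t_j\leq t}}\mathbbm{1}_{\{X_{t_j}=Y_{t_j}=0\}}(X^+_{t_{j+1}})^{p-1}=\int_0^t\mathbbm{1}_{\{Y_s=0\}}\,dL^X_s(0),
\end{equation*}
and symmetrically for the $Y$-term. This is the main (and really only) technical point, but it follows from essentially the same manipulation used in the proof of Lemma~\ref{Lem : max}: multiplying the relation $\mathbbm{1}_{(-\infty,0]}(X_{t_j})(X^+_{t_{j+1}})^{p-1}=\mathbbm{1}_{(-\infty,0]}(X_{t_j})(L^{X,\pi_n}_{t_{j+1}}(0)-L^{X,\pi_n}_{t_j}(0))$ by the additional indicator $\mathbbm{1}_{\{Y_{t_j}=0\}}$, passing to the limit, and absorbing the strict-inequality part via the analogue of \eqref{Eq : eq4} (which forces $\mathbbm{1}_{(-\infty,0)}(X_{t_j})(X^+_{t_{j+1}})^{p-1}\to 0$), together with the support property $\int_0^t\mathbbm{1}_{\{X_u\ne 0\}}\,dL^X_u(0)=0$ from \eqref{Eq : Equivalent LS}.

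Combining the three pieces gives the collision contribution $\int_0^t\mathbbm{1}_{\{Y_s=0\}}\,dL^X_s(0)+\int_0^t\mathbbm{1}_{\{X_s=0\}}\,dL^Y_s(0)$, which together with the two earlier integrals restores $\mathbbm{1}_{\{Y_s<0\}}+\mathbbm{1}_{\{Y_s=0\}}+\mathbbm{1}_{\{Y_s>0\}}\equiv 1$ against $dL^X_s(0)$ (and symmetrically), yielding \eqref{Eq : min plus max}. I expect the bookkeeping at the triple-collision set $\{X=Y=0\}$ to be the only subtle point; everything else is a clean addition of the two preceding theorems and an application of the set-theoretic identity for $a\vee b$ and $a\wedge b$.
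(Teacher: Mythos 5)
Your proposal is correct and follows essentially the same route as the paper: add Theorems~\ref{Thm : local time of max} and \ref{Thm : local time of min}, use the elementary identity $(a\vee b)^{p-1}+(a\wedge b)^{p-1}=a^{p-1}+b^{p-1}$ inside the Riemann sums for the two collision integrals, and identify the resulting limits with $\int_0^t\mathbbm{1}_{\{Y_s=0\}}\,dL^X_s(0)+\int_0^t\mathbbm{1}_{\{X_s=0\}}\,dL^Y_s(0)$ via Lemma~\ref{Lem : max}. You simply spell out more explicitly the appeal to the intermediate relation from Lemma~\ref{Lem : max}'s proof and the final bookkeeping $\mathbbm{1}_{\{Y_s<0\}}+\mathbbm{1}_{\{Y_s=0\}}+\mathbbm{1}_{\{Y_s>0\}}\equiv 1$ against $dL^X_s(0)$, which the paper leaves implicit.
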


\begin{proof}
	The elementary identity $(a \vee b)^{p-1} + (a \wedge b)^{p-1} = a^{p-1} + b^{p-1}$ holds for arbitrary numbers $a$ and $b$. Therefore, the sum of the last integrals of \eqref{Eq : local time of max} and \eqref{Eq : local time of min} can be expressed as
	\begin{align*}
	&\quad \int_0^t \mathbbm{1}_{\{X_s = Y_s = 0\}}d[X^+ \vee Y^+]^{p-1}_s + \int_0^t \mathbbm{1}_{\{X_s = Y_s = 0\}}d[X^+ \wedge Y^+]^{p-1}_s 
	\\
	&= \lim_{n \rightarrow \infty} \sum_{\substack{[t_j, t_{j+1}] \in \pi_n \\ t_j \leq t}} \mathbbm{1}_{\{X_{t_j} = Y_{t_j} = 0\}} \big\{ (X^+_{t_{j+1}} \vee Y^+_{t_{j+1}})^{p-1} + (X^+_{t_{j+1}} \wedge Y^+_{t_{j+1}})^{p-1} \big\} 
	\\
	&= \lim_{n \rightarrow \infty} \sum_{\substack{[t_j, t_{j+1}] \in \pi_n \\ t_j \leq t}} \mathbbm{1}_{\{X_{t_j} = Y_{t_j} = 0\}} \big\{ (X^+_{t_{j+1}})^{p-1} + (Y^+_{t_{j+1}})^{p-1} \big\}
	\\
	&= \lim_{n \rightarrow \infty} \sum_{\substack{[t_j, t_{j+1}] \in \pi_n \\ t_j \leq t}} \mathbbm{1}_{\{Y_{t_j} = 0\}} \mathbbm{1}_{\{X_{t_j} = 0\}} (X^+_{t_{j+1}})^{p-1} + \lim_{n \rightarrow \infty} \sum_{\substack{[t_j, t_{j+1}] \in \pi_n \\ t_j \leq t}} \mathbbm{1}_{\{X_{t_j} = 0\}} \mathbbm{1}_{\{Y_{t_j} = 0\}} (Y^+_{t_{j+1}})^{p-1}
	\\
	& = \int_0^t \mathbbm{1}_{\{Y_s = 0\}}dL^X_s(0) + \int_0^t \mathbbm{1}_{\{X_s = 0\}}dL^Y_s(0), 
	\end{align*}
	where the last equality follows from Lemma~\ref{Lem : max}. Then, the result follows by adding the two equations \eqref{Eq : local time of max} and \eqref{Eq : local time of min}. 
\end{proof}

\bigskip

\bigskip

\section{Descending Ranks}
\label{sec: 5}

For given $m$ continuous functions $X_1, \cdots, X_m \in V_p(\pi)$, we define
\begin{equation}	\label{Def : ranked functions}
	X_{(k)}(\cdot) := \max_{1 \leq i_1 < \cdots <i_k \leq m}\min \{X_{i_1}(\cdot), \cdots, X_{i_k}(\cdot)\},
\end{equation}
which represents the $k$th rank function $X_{(k)}(\cdot)$ of $X_1, \cdots, X_m$, in descending order. More explicitly, for any $t \in [0, T]$, we have
\begin{equation}	\label{Def : ranks of X}
	\max_{1 \leq i \leq m} X_i(t) = X_{(1)}(t) \geq X_{(2)}(t) \geq \cdots \geq X_{(m-1)}(t) \geq X_{(m)}(t) = \min_{1 \leq i \leq m} X_i(t),
\end{equation}
so that these ranked functions represent the original functions arranged in descending order. By Remark~\ref{rem : also p-th variation} and Definition~\ref{Def : ranked functions}, the ranked functions $X_{(1)}, \cdots, X_{(m)}$ belong to the space $V_p(\pi)$ (respectively, $\pazocal{L}^c_p(\pi)$), if the original functions $X_{1}, \cdots, X_{m}$ belong to the space $\in V_p(\pi)$ (respectively, $\pazocal{L}^c_p(\pi)$). Then, we have the following extension of Theorem~\ref{Thm : min plus max} for $m$ continuous functions.

\medskip

\begin{thm} \label{Thm : m functions}
	For given $m$ continuous functions $X_1, \cdots, X_m \in V_p(\pi)$ with continuous local times $L^{X_1}_t(0), \cdots, L^{X_m}_t(0)$, we have the identity for any $t \in [0, T]$,
	\begin{equation*}
		\sum_{k=1}^m L^{X_{(k)}}_t(0) = \sum_{i=1}^m L^{X_i}_t(0).
	\end{equation*}
\end{thm}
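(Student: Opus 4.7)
The plan is to proceed by induction on $m$, with Theorem~\ref{Thm : min plus max} serving as the base case $m = 2$.

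For the inductive step I will let $Y_{(1)} \geq \cdots \geq Y_{(m-1)}$ denote the ranked functions of $X_1, \ldots, X_{m-1}$ and define auxiliary functions $V_k := Y_{(k)} \wedge X_m$ for $k = 1, \ldots, m-1$, with the convention $V_0 := X_m$. By Remark~\ref{rem : also p-th variation}, each $Y_{(k)}$ and $V_k$ belongs to $\pazocal{L}^c_p(\pi)$ whenever the original $X_i$'s do, so their continuous local times at $0$ are all defined. The key algebraic observation, which I will verify by a short case analysis on where $X_m(t)$ falls among $Y_{(1)}(t), \ldots, Y_{(m-1)}(t)$, is that inserting $X_m$ into the sorted list $Y_{(1)} \geq \cdots \geq Y_{(m-1)}$ produces the ranks
\[
X_{(k)} = V_{k-1} \vee Y_{(k)}, \qquad k = 1, \ldots, m-1, \qquad X_{(m)} = V_{m-1}.
\]
I will also use the companion identity $V_{k-1} \wedge Y_{(k)} = V_k$, which follows immediately from $Y_{(k-1)} \geq Y_{(k)}$.

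With these in hand, I apply Theorem~\ref{Thm : min plus max} to each pair $(V_{k-1}, Y_{(k)})$ for $k = 1, \ldots, m-1$ and use $V_{k-1} \vee Y_{(k)} = X_{(k)}$ together with $V_{k-1} \wedge Y_{(k)} = V_k$ to obtain
\[
L^{X_{(k)}}_t(0) + L^{V_k}_t(0) = L^{V_{k-1}}_t(0) + L^{Y_{(k)}}_t(0).
\]
Summing these $m-1$ identities and telescoping the $L^{V_k}$-terms (noting $V_0 = X_m$ and $V_{m-1} = X_{(m)}$) yields
\[
\sum_{k=1}^m L^{X_{(k)}}_t(0) = \sum_{k=1}^{m-1} L^{Y_{(k)}}_t(0) + L^{X_m}_t(0),
\]
after which the inductive hypothesis applied to $X_1, \ldots, X_{m-1}$ converts the right-hand side into $\sum_{i=1}^m L^{X_i}_t(0)$, completing the induction.

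The main obstacle will be establishing the combinatorial identity $X_{(k)} = V_{k-1} \vee Y_{(k)}$; once the three cases $X_m \geq Y_{(k-1)}$, $Y_{(k-1)} > X_m \geq Y_{(k)}$, and $X_m < Y_{(k)}$ are checked, the rest of the argument is pure telescoping on top of the $m=2$ result.
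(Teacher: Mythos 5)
Your proof is correct and follows exactly the approach the paper indicates: induction on $m$ with Theorem~\ref{Thm : min plus max} as the base case, in the manner of Theorem~2.2 of Banner and Ghomrasni, whose details the paper leaves to the reference. You have correctly identified the key decomposition $X_{(k)} = V_{k-1}\vee Y_{(k)}$, $V_{k-1}\wedge Y_{(k)} = V_k$ (which the three-case check confirms), verified via Remark~\ref{rem : also p-th variation} that all intermediate functions stay in $\pazocal{L}^c_p(\pi)$, and the telescoping is sound.
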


\begin{proof}
	Using induction on Theorem~\ref{Thm : min plus max}, we can prove this identity in exactly the same manner as in the proof of Theorem~2.2 in \cite{Banner:Ghomrasni}.  
\end{proof}

\bigskip

Our next aim is to derive expressions of the descending ranked functions $X_{(k)}$ for $k=1, \cdots, m$ in terms of the original functions $X_1, \cdots, X_m$ and appropriate local times, as in Theorem~2.3 of \cite{Banner:Ghomrasni}. In this result, expressions such as ``$dX_i(t)$" appear, and represent It\^o integration with respect to a semimartingale integrator.

\smallskip

However, in our setting, when $X_i(\cdot)$ is in $V_p(\pi)$, such expression ``$dX_i(t)$" generally does not make any sense. When a certain type of integrand, namely $f'(X_i(t))$, is given, the F{\"o}llmer integral 
\begin{equation*}
	\int_0^t f'\big(X_i(u)\big)dX_i(u)	
\end{equation*}
(and the integrator $dX_i(t)$) defined as the pointwise limit of compensated Riemann sums in \eqref{Eq : compensated Riemann sum} makes sense and this integral has different representations depending on the regularity of the test-function $f : \mathbb{R} \rightarrow \mathbb{R}$ as in Theorem~\ref{Thm : Ito formula}, Theorem~\ref{Thm : tanaka formula}, and Theorem~\ref{Thm : tanaka formula2}.

\smallskip

For example, with $X \in \pazocal{L}^c_p(\pi)$, \eqref{Eq : tanaka formula2} in Theorem~\ref{Thm : tanaka formula2} shows that this integral can be evaluated as 
\begin{equation*}
	f\big(X(t)\big) - f\big(X(0)\big) - \frac{1}{(p-1)!}\int_{\mathbb{R}} L_t^{p, c}(x) df^{(p-1)}(x)
\end{equation*}
for every function $f$ satisfying the conditions of Theorem~\ref{Thm : tanaka formula2}. Thus, we fix a function $f \in C^{p-2}(\mathbb{R}, \mathbb{R})$ with absolutely continuous derivative $f^{(p-2)}$, and assume that the weak derivative $f^{(p-1)}$ of this latter function is of bounded variation. We also assume that the functions $X_1, \cdots, X_m$ belong to the space $\pazocal{L}^c_p(\pi)$.

\smallskip

For any $t \in [0, T]$ and a sequence $\pi = (\pi_n)_{n \in \mathbb{N}}$ of partitions, we fix $[t_j, t_{j+1}] \in \pi_n$ such that $t_j \leq t$. Then, we define
\begin{equation}	\label{Def : S, N}
	S_t(k) := \{i : X_i(t) = X_{(k)}(t)\} \qquad \text{and} \qquad N_t(k):=|S_t(k)|.
\end{equation}
Here, $N_t(k)$ is the number of functions which are at rank $k$ at time $t$.

\smallskip

We start with an expression
\begin{equation}	\label{Eq : compensated Riemann sum of ranked}
	\sum_{r=1}^{p-1} \frac{f^{(r)}\big(X_{(k)}(t_j)\big)}{r!} \Big(X_{(k)}(t_{j+1})-X_{(k)}(t_{j})\Big)^r,
\end{equation}
whose sum over all $t_j$'s satisfying $[t_j, t_{j+1}] \in \pi_n$ and $t_j \leq t$ will converge as $n \rightarrow \infty$ to the integral 
\begin{equation}	\label{Eq : integral of ranked}
	\int_0^t f'\big(X_{(k)}(u)\big)dX_{(k)}(u).
\end{equation}

\smallskip

For any integer $r \in \{1, \cdots, p-1\}$, using the definition \eqref{Def : S, N} and the fact that 
\begin{equation}	\label{Eq : N relationship}
	\sum_{i=1}^m N_{t_j}(k)^{-1}\mathbbm{1}_{\{X_{(k)}(t_j)=X_i(t_j)\}} = 1,	
\end{equation}
we have
\begin{align}
	\hfilneg
	\big(X_{(k)}(t_{j+1}) &- X_{(k)}(t_{j})\big)^{r}			
	= \sum_{i=1}^m \frac{\mathbbm{1}_{\{X_{(k)}(t_j)=X_i(t_j)\}}}{N_{t_j}(k)} \big(X_{(k)}(t_{j+1})-X_{(k)}(t_{j})\big)^{r}					\nonumber
	\hspace{10000pt minus 1fil}
	\\
	&= \sum_{i=1}^m \frac{\mathbbm{1}_{\{X_{(k)}(t_j)=X_i(t_j)\}}}{N_{t_j}(k)} \big(X_{i}(t_{j+1})-X_{i}(t_{j})\big)^{r}						\label{Eq : eq5}
	\\
	&~~~+ \sum_{i=1}^m \frac{\mathbbm{1}_{\{X_{(k)}(t_j)=X_i(t_j)\}}}{N_{t_j}(k)} \big\{ \big(X_{(k)}(t_{j+1})-X_{(k)}(t_{j})\big)^{r} - \big(X_{i}(t_{j+1})-X_{i}(t_{j})\big)^{r} \big\}. 			\nonumber
\end{align}
When $X_{(k)}(t_j)=X_i(t_j)$ holds, by the Binomial Theorem, the expression in the curly brackets in the last sum may be rewritten as
\begin{align}
	&\big(X_{(k)}(t_{j+1})-X_{(k)}(t_{j})\big)^{r} - \big(X_{i}(t_{j+1})-X_{i}(t_{j})\big)^{r}	\nonumber
	\\
	=&\sum_{\ell=1}^r\frac{r!}{\ell!(r-\ell)!} \big(X_i(t_{j+1})-X_i(t_j)\big)^{r-\ell} \big(X_{(k)}(t_{j+1})-X_i(t_{j+1})\big)^{\ell}.		\label{Eq : eq6}
\end{align}
Then, by plugging \eqref{Eq : eq6}, \eqref{Eq : eq5} into \eqref{Eq : compensated Riemann sum of ranked}, the expression \eqref{Eq : compensated Riemann sum of ranked} becomes
\begin{align}
	&\sum_{i=1}^m \frac{\mathbbm{1}_{\{X_{(k)}(t_j)=X_i(t_j)\}}}{N_{t_j}(k)}
	\sum_{r=1}^{p-1} \frac{f^{(r)}\big(X_{i}(t_j)\big)}{r!} \big(X_{i}(t_{j+1})-X_{i}(t_{j})\big)^{r}		\label{Eq : eq7}
	\\
	+&\sum_{i=1}^m \frac{\mathbbm{1}_{\{X_{(k)}(t_j)=X_i(t_j)\}}}{N_{t_j}(k)}
	\sum_{r=1}^{p-1} \sum_{\ell=1}^r \frac{f^{(r)}\big(X_{(k)}(t_j)\big)}{\ell!(r-\ell)!}
	\big(X_i(t_{j+1})-X_i(t_j)\big)^{r-\ell} \big(X_{(k)}(t_{j+1})-X_i(t_{j+1})\big)^{\ell}.	\label{Eq : eq8}
\end{align}
Since
\begin{equation*}
	\sum_{\substack{[t_j, t_{j+1}] \in \pi_n \\ t_j \leq t}} \sum_{r=1}^{p-1} \frac{f^{(r)}\big(X_{i}(t_j)\big)}{r!} \big(X_{i}(t_{j+1})-X_{i}(t_{j})\big)^{r}
\end{equation*}
converges to $\int_0^t f'(X_{i}(u))dX_{i}(u)$ as $n \rightarrow \infty$, for each $i$, the sum
\begin{equation*}
	\sum_{\substack{[t_j, t_{j+1}] \in \pi_n \\ t_j \leq t}} 
	\sum_{i=1}^m \frac{\mathbbm{1}_{\{X_{(k)}(t_j)=X_i(t_j)\}}}{N_{t_j}(k)}
	\sum_{r=1}^{p-1} \frac{f^{(r)}\big(X_{i}(t_j)\big)}{r!} \big(X_{i}(t_{j+1})-X_{i}(t_{j})\big)^{r}
\end{equation*}
of \eqref{Eq : eq7} over $t_j$'s also converges by virtue of \eqref{Eq : N relationship}, and we denote this limit as
\begin{equation}	\label{Eq : eq9}
	\sum_{i=1}^m \int_0^t \frac{\mathbbm{1}_{\{X_{(k)}(u)=X_i(u)\}}}{N_{u}(k)} f'(X_{i}(u))dX_{i}(u).
\end{equation}

Next, we need to deal with the expression \eqref{Eq : eq8}. In \eqref{Eq : eq8}, we change the order of last two summation and take out the term of $\ell = r = p-1$ to obtain
\begin{align}
	&~~~~~\sum_{i=1}^m \frac{\mathbbm{1}_{\{X_{(k)}(t_j)=X_i(t_j)\}}}{N_{t_j}(k)}
	\sum_{\ell=1}^{p-1} \sum_{r=\ell}^{p-1} \frac{f^{(r)}\big(X_{(k)}(t_j)\big)}{\ell!(r-\ell)!}
	\big(X_i(t_{j+1})-X_i(t_j)\big)^{r-\ell} \big(X_{(k)}(t_{j+1})-X_i(t_{j+1})\big)^{\ell}	\nonumber
	\\
	&=\sum_{i=1}^m \frac{\mathbbm{1}_{\{X_{(k)}(t_j)=X_i(t_j)\}}}{N_{t_j}(k)}
	\sum_{\ell=1}^{p-2} \sum_{r=\ell}^{p-1} \frac{f^{(r)}\big(X_{(k)}(t_j)\big)}{\ell!(r-\ell)!}
	\big(X_i(t_{j+1})-X_i(t_j)\big)^{r-\ell} \big(X_{(k)}(t_{j+1})-X_i(t_{j+1})\big)^{\ell}	\label{Eq : eq10}
	\\
	&~~~+\sum_{i=1}^m \frac{\mathbbm{1}_{\{X_{(k)}(t_j)=X_i(t_j)\}}}{N_{t_j}(k)}\frac{
	f^{(p-1)}\big(X_{(k)}(t_j)\big)}{(p-1)!} \big(X_{(k)}(t_{j+1})-X_i(t_{j+1})\big)^{p-1}.		\label{Eq : eq11}
\end{align}
For \eqref{Eq : eq11}, we decompose the expression $\big(X_{(k)}(t_{j+1})-X_i(t_{j+1})\big)^{p-1}$ into 
\begin{equation*}
	\Big(\big(X_{(k)}(t_{j+1})-X_i(t_{j+1})\big)^+\Big)^{p-1} - \Big(\big(X_{(k)}(t_{j+1})-X_i(t_{j+1})\big)^-\Big)^{p-1}
\end{equation*}
to obtain
\begin{align}
	&\sum_{i=1}^m \frac{f^{(p-1)}\big(X_{(k)}(t_j)\big)}{(p-1)!N_{t_j}(k)} \mathbbm{1}_{\{X_{(k)}(t_j)=X_i(t_j)\}}
	\Big(\big(X_{(k)}(t_{j+1})-X_i(t_{j+1})\big)^+\Big)^{p-1}		\label{Eq : eq12}
	\\
	-&\sum_{i=1}^m \frac{f^{(p-1)}\big(X_{(k)}(t_j)\big)}{(p-1)!N_{t_j}(k)} \mathbbm{1}_{\{X_{(k)}(t_j)=X_i(t_j)\}}
	\Big(\big(X_{(k)}(t_{j+1})-X_i(t_{j+1})\big)^-\Big)^{p-1}.		\label{Eq : eq13}
\end{align}
Note that $f^{(p-1)}\big(X_{(k)}(t_j)\big)$ is bounded for any $t_j \in \pi_n$, because $f$ is an $C^{p-1}$ function and the continuous functions $X_{(k)}$'s have compact support over $[0, T]$.
The sum of the expression in \eqref{Eq : eq12} over the $t_j$'s, namely,
\begin{equation*}
	\sum_{\substack{[t_j, t_{j+1}] \in \pi_n \\ t_j \leq t}} \sum_{i=1}^m \frac{f^{(p-1)}\big(X_{(k)}(t_j)\big)}{(p-1)!N_{t_j}(k)} \mathbbm{1}_{\{X_{(k)}(t_j)=X_i(t_j)\}}
	\Big(\big(X_{(k)}(t_{j+1})-X_i(t_{j+1})\big)^+\Big)^{p-1}
\end{equation*}
converges to
\begin{equation}	\label{Eq : eq14}
	\sum_{i=1}^m \int_0^t \frac{f^{(p-1)}\big(X_{(k)}(u)\big)}{(p-1)!N_u(k)} dL^{(X_{(k)}-X_i)^+}_u(0),
\end{equation}
as a result of Lemma~\ref{Lem : max}. Similarly, the sum of \eqref{Eq : eq13} over the $t_j$'s converges to
\begin{equation}	\label{Eq : eq15}
	\sum_{i=1}^m \int_0^t \frac{f^{(p-1)}\big(X_{(k)}(u)\big)}{(p-1)!N_u(k)} dL^{(X_{(k)}-X_i)^-}_u(0),
\end{equation}
by virtue of Lemma~\ref{Lem : min}. By the identities
\begin{equation}		\label{Eq : ranks}
	(X_{(k)}-X_{(h)})^+ = \begin{cases}
		X_{(k)}-X_{(h)}, ~~ \text{if } h > k
		\\
		0, \qquad \qquad ~~~~\text{if } h \leq k,
	\end{cases}
	\quad
	(X_{(k)}-X_{(h)})^- = \begin{cases}
	X_{(h)}-X_{(k)}, ~~ \text{if } h < k
	\\
	0, \qquad \qquad ~~~~\text{if } h \geq k,
	\end{cases}
\end{equation}
with Theorem~\ref{Thm : m functions}, the integrals \eqref{Eq : eq14} and \eqref{Eq : eq15} become
\begin{equation}		\label{Eq : collision local time}
	\sum_{h=k+1}^m \int_0^t \frac{f^{(p-1)}\big(X_{(k)}(u)\big)}{(p-1)!N_u(k)} dL^{(X_{(k)}-X_{(h)})}_u(0),
\end{equation}
and
\begin{equation*}
	\sum_{h=1}^{k-1} \int_0^t \frac{f^{(p-1)}\big(X_{(k)}(u)\big)}{(p-1)!N_u(k)} dL^{(X_{(h)}-X_{(k)})}_u(0),
\end{equation*}
respectively. Here, the local time $L^{(X_{(k)}-X_{(h)})}_u(0)$ in \eqref{Eq : collision local time} is called a ``collision local time'' of order $h-k+1$ among ranked functions $X_{(1)}, \cdots, X_{(m)}$. Thus, the sum of the expression \eqref{Eq : eq11} over $t_j$'s converges as $n \rightarrow \infty$ to
\begin{equation}	\label{Eq : eq16}
	\sum_{h=k+1}^m \int_0^t \frac{f^{(p-1)}\big(X_{(k)}(u)\big)}{(p-1)!N_u(k)} dL^{(X_{(k)}-X_{(h)})}_u(0)
	-\sum_{h=1}^{k-1} \int_0^t \frac{f^{(p-1)}\big(X_{(k)}(u)\big)}{(p-1)!N_u(k)} dL^{(X_{(h)}-X_{(k)})}_u(0).
\end{equation}

\smallskip

To sum up, \eqref{Eq : compensated Riemann sum of ranked} is represented as the sum of \eqref{Eq : eq7}, \eqref{Eq : eq10}, and \eqref{Eq : eq11}. The sums of \eqref{Eq : compensated Riemann sum of ranked}, \eqref{Eq : eq7}, and \eqref{Eq : eq11} over $t_j$'s converge to the integrals \eqref{Eq : integral of ranked}, \eqref{Eq : eq9}, and \eqref{Eq : eq16}, respectively. Therefore, the sum of \eqref{Eq : eq10} over $t_j$'s should also converge, and we denote this limit by
\begin{equation*}
	\sum_{i=1}^m \int_0^t \frac{\mathbbm{1}_{\{X_{(k)}(u)=X_i(u)\}}}{N_{u}(k)} \nabla f\big(X_{(k)}(u)\big) d \langle X_i, L^{(X_{(k)}-X_i)}(0)\rangle_u.
\end{equation*}
This last integral contains all the cross-terms of the derivatives of $f$, the increments of $X_i$ and the local time of $(X_{(k)}-X_i)$, up to order $p-1$ as we can see from \eqref{Eq : eq10}.

We arrive at the following ``integration along ranks" formula.

\medskip

\begin{prop}	\label{Prop: ranked formula}
	Let $f$ be a given function in $C^{p-2}(\mathbb{R}, \mathbb{R})$ with absolutely continuous derivative $f^{(p-2)}$ which admits a weak derivative $f^{(p-1)}$ of bounded variation. Then, the F{\"o}llmer integral of order $p$, defined as in \eqref{Eq : follmer integral}, of $k$th rank function $X_{(k)}(\cdot)$ among $m$ continuous functions $X_1, \cdots, X_m \in \mathcal{L}^c_p(\pi)$, is expressed as 
	\begin{align}
	&~~~~~\int_0^t f'(X_{(k)}(u))dX_{(k)}(u)			\label{Eq : ranked formula}
	\\
	&=~~~\sum_{i=1}^m \int_0^t \frac{\mathbbm{1}_{\{X_{(k)}(u)=X_i(u)\}}}{N_{u}(k)} f'(X_{i}(u))dX_{i}(u)													\nonumber
	\\
	&~~~+ \sum_{h=k+1}^m \int_0^t \frac{f^{(p-1)}\big(X_{(k)}(u)\big)}{(p-1)!N_u(k)} dL^{(X_{(k)}-X_{(h)})}_u(0)
	-\sum_{h=1}^{k-1} \int_0^t \frac{f^{(p-1)}\big(X_{(k)}(u)\big)}{(p-1)!N_u(k)} dL^{(X_{(h)}-X_{(k)})}_u(0)												\nonumber
	\\
	&~~~+~~\sum_{i=1}^m \int_0^t \frac{\mathbbm{1}_{\{X_{(k)}(u)=X_i(u)\}}}{N_{u}(k)} \nabla f\big(X_{(k)}(u)\big) d \langle X_i, L^{(X_{(k)}-X_i)}(0)\rangle_u,		\nonumber
	\end{align}
	for $k=1, \cdots, m$. Here, the last integral represents the limit
	\begin{align}
		&~~~~~\sum_{i=1}^m \int_0^t \frac{\mathbbm{1}_{\{X_{(k)}(u)=X_i(u)\}}}{N_{u}(k)} \nabla f\big(X_{(k)}(u)\big) d \langle X_i, L^{(X_{(k)}-X_i)}(0)\rangle_u
		\label{Eq: crossterms}
		\\
		&:=\lim_{n \rightarrow \infty} \sum_{\substack{[t_j, t_{j+1}] \in \pi_n \\ t_j \leq t}}
		\sum_{i=1}^m \frac{\mathbbm{1}_{\{X_{(k)}(t_j)=X_i(t_j)\}}}{N_{t_j}(k)}
		\sum_{\ell=1}^{p-2} \sum_{r=\ell}^{p-1} \frac{f^{(r)}\big(X_{(k)}(t_j)\big)}{\ell!(r-\ell)!}
		\big(X_i(t_{j+1})-X_i(t_j)\big)^{r-\ell} \big(X_{(k)}(t_{j+1})-X_i(t_{j+1})\big)^{\ell}.
		\nonumber
	\end{align}
\end{prop}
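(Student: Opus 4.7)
The proof is essentially already carried out in the paragraphs immediately preceding the statement; my plan is to organize those computations as a formal argument. I would begin from the compensated Riemann sum \eqref{Eq : compensated Riemann sum of ranked}, which by Theorem~\ref{Thm : tanaka formula2} applied to $X_{(k)} \in \pazocal{L}_p^c(\pi)$ (using Remark~\ref{rem : also p-th variation}) converges as $n \to \infty$ to the Föllmer integral \eqref{Eq : integral of ranked}. The strategy is to split this sum into three groups whose individual limits can be identified.

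The first step is to use the partition of unity \eqref{Eq : N relationship} to insert the weights $N_{t_j}(k)^{-1}\mathbbm{1}_{\{X_{(k)}(t_j)=X_i(t_j)\}}$ inside the summand, splitting $(X_{(k)}(t_{j+1})-X_{(k)}(t_j))^r$ as the corresponding sum of $(X_i(t_{j+1})-X_i(t_j))^r$ plus a remainder, as in \eqref{Eq : eq5}. The Binomial Theorem applied on $\{X_{(k)}(t_j)=X_i(t_j)\}$ rewrites the remainder as in \eqref{Eq : eq6}, factoring out $(X_{(k)}(t_{j+1})-X_i(t_{j+1}))^\ell$ for $\ell \geq 1$. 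This yields the decomposition of \eqref{Eq : compensated Riemann sum of ranked} into the three expressions \eqref{Eq : eq7}, \eqref{Eq : eq10}, and \eqref{Eq : eq11}.

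Next, I would identify the limits of \eqref{Eq : eq7} and \eqref{Eq : eq11} separately. For \eqref{Eq : eq7}, the inner sum over $r$ is the compensated Riemann sum defining $\int_0^\cdot f'(X_i(u))dX_i(u)$, and the bounded indicator weights together with \eqref{Eq : N relationship} yield the limit \eqref{Eq : eq9}. For \eqref{Eq : eq11}, I would exploit the fact that $p$ is even to decompose $y^{p-1}=(y^+)^{p-1}-(y^-)^{p-1}$ with $y = X_{(k)}(t_{j+1})-X_i(t_{j+1})$, giving the two pieces \eqref{Eq : eq12} and \eqref{Eq : eq13}. Applying Lemma~\ref{Lem : max} to the nonnegative path $(X_{(k)}-X_i)^+$ and Lemma~\ref{Lem : min} to $(X_{(k)}-X_i)^-$ converts these into integrals against $dL^{(X_{(k)}-X_i)^\pm}(0)$. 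Finally, the identities \eqref{Eq : ranks} reorganize the sum over original indices $i$ into a sum over ranks $h$, and Theorem~\ref{Thm : m functions} allows the sum $\sum_i \mathbbm{1}_{\{X_{(k)}(u)=X_i(u)\}}/N_u(k)$ to be collapsed so that each collision local time $L^{(X_{(k)}-X_{(h)})}(0)$ appears with the correct total weight; the upshot is \eqref{Eq : eq16}.

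The last step is to observe that, since the sums of \eqref{Eq : compensated Riemann sum of ranked}, \eqref{Eq : eq7}, and \eqref{Eq : eq11} over $t_j$ all converge as $n \to \infty$, the sum of \eqref{Eq : eq10} must converge as well by subtraction; this forced limit is precisely the object \emph{defined} by \eqref{Eq: crossterms}, and rearrangement delivers \eqref{Eq : ranked formula}. I expect the main obstacle to be the bookkeeping in the translation from \eqref{Eq : eq11} into collision local times: one must track how the weight $\mathbbm{1}_{\{X_{(k)}(u)=X_i(u)\}}/N_u(k)$ interacts with \eqref{Eq : ranks} when several of the $X_i$ share rank $k$, making sure Theorem~\ref{Thm : m functions} is invoked on the right ``equivalence class'' of indices so that no collision local time is double-counted or missed. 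Everything else is a mechanical reorganization of compensated Riemann sums that are already known to converge.
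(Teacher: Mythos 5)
Your proposal reproduces the paper's own derivation step by step: the same insertion of the partition of unity $\sum_i \mathbbm{1}_{\{X_{(k)}=X_i\}}/N(k)$, the same binomial expansion, the same split into \eqref{Eq : eq7}, \eqref{Eq : eq10}, \eqref{Eq : eq11}, the same conversion of \eqref{Eq : eq11} via Lemmas~\ref{Lem : max} and \ref{Lem : min} into collision local times using \eqref{Eq : ranks} and Theorem~\ref{Thm : m functions}, and the same ``forced convergence'' argument defining \eqref{Eq: crossterms}. This is exactly the paper's proof, and your flagged concern about double-counting in the rank reindexing is indeed the one genuinely delicate bookkeeping point.
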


\bigskip

\begin{rem} [Integral Representation for Ranked Processes]
	When $p=2$ with the choice of function $f(x)=x$, the expression \eqref{Eq: crossterms} vanishes and the formula \eqref{Eq : ranked formula} becomes
	\begin{align}
		X_{(k)}(t)-X_{(k)}(0)					\label{Eq : representation for p=2}
		&=\sum_{i=1}^m \int_0^t \frac{\mathbbm{1}_{\{X_{(k)}(u)=X_i(u)\}}}{N_{u}(k)} dX_{i}(u)
		\\
		&~~~+ \sum_{h=k+1}^m \int_0^t \frac{1}{N_u(k)} dL^{(X_{(k)}-X_{(h)})}_u(0)
		-\sum_{h=1}^{k-1} \int_0^t \frac{1}{N_u(k)} dL^{(X_{(h)}-X_{(k)})}_u(0).	\nonumber
	\end{align}
	This representation for the descending order statistics of \eqref{Def : ranks of X}, in terms of integrals with respect to the original functions $X_1, \cdots, X_m$ and the collision local times in \eqref{Eq : collision local time}, has the same form as Theorem~2.3 of \cite{Banner:Ghomrasni} in the semimartingale context.
\end{rem}

\bigskip

\begin{rem}
	We present now a way to simplify \eqref{Eq: crossterms} by imposing a condition on the function $f$. Note that the expression \eqref{Eq: crossterms} is the limit of the sum over the $t_j$'s. Then, we rewrite this sum over the $t_j$'s as
	\begin{align}
		\sum_{\substack{[t_j, t_{j+1}] \in \pi_n \\ t_j \leq t}}
		\sum_{i=1}^m \frac{\mathbbm{1}_{\{X_{(k)}(t_j)=X_i(t_j)\}}}{N_{t_j}(k)}
		\sum_{\ell=1}^{p-2} \sum_{r=\ell}^{p-1} \frac{f^{(r)}\big(X_{i}(t_j)\big)}{\ell!(r-\ell)!}
		\big(X_i(t_{j+1})-X_i(t_j)\big)^{r-\ell} \big(X_{(k)}(t_{j+1})-X_i(t_{j+1})\big)^{\ell}	\nonumber
		\\
		=\sum_{i=1}^m \sum_{\ell=1}^{p-2} \sum_{\substack{[t_j, t_{j+1}] \in \pi_n \\ t_j \leq t}}
		\frac{\mathbbm{1}_{\{X_{(k)}(t_j)=X_i(t_j)\}}}{\ell!~N_{t_j}(k)}
		\big(X_{(k)}(t_{j+1})-X_i(t_{j+1})\big)^{\ell}
		\sum_{r=\ell}^{p-1} \frac{f^{(r)}\big(X_{i}(t_j)\big)}{(r-\ell)!}
		\big(X_i(t_{j+1})-X_i(t_j)\big)^{r-\ell}.						\label{Eq : nasty cross}
	\end{align}
	In \eqref{Eq : nasty cross}, for each $t_j$ and $i$, 
	\begin{equation}			\label{Eq : eq17}
		\big(X_{(k)}(t_{j+1})-X_i(t_{j+1})\big)^{\ell} \quad \longrightarrow \quad 0 \quad \text{ as } \quad n \rightarrow \infty,
	\end{equation}
	because 
	\begin{align*}
		&\big| X_{(k)}(t_{j+1})-X_i(t_{j+1})\big|^{\ell}
		=\big| X_{(k)}(t_{j+1})-X_{(k)}(t_{j})+X_i(t_{j+1})-X_i(t_{j})\big|^{\ell}
		\\
		\leq&~\Big( \big| X_{(k)}(t_{j+1})-X_{(k)}(t_{j}) \big| + \big| X_i(t_{j+1})-X_i(t_{j})\big| \Big)^{\ell}
		\leq 2^{\ell}\{osc(X, \pi_n)\}^{\ell},
	\end{align*}
	and $osc(X, \pi_n) := \max_{i=1, \cdots, m} \{osc(X_i, \pi_n)\}$ converges to zero as $n \rightarrow \infty$. On the other hand, the last part of \eqref{Eq : nasty cross} can be rewritten as
	\begin{align}
		&~\sum_{r=\ell}^{p-1} \frac{f^{(r)}\big(X_{i}(t_j)\big)}{(r-\ell)!}
		\big(X_i(t_{j+1})-X_i(t_j)\big)^{r-\ell}			\nonumber
		\\
		=&\sum_{q=0}^{p-1-\ell} \frac{f^{(q+\ell)}\big(X_{i}(t_j)\big)}{q!}
		\big(X_i(t_{j+1})-X_i(t_j)\big)^{q}					\nonumber
		\\
		=&f^{(\ell)}\big(X_{i}(t_j)\big) + \sum_{q=1}^{p-1-\ell} \frac{f^{(q+\ell)}\big(X_{i}(t_j)\big)}{q!}
		\big(X_i(t_{j+1})-X_i(t_j)\big)^{q}					\nonumber
		\\
		=&f^{(\ell)}\big(X_{i}(t_j)\big) + \sum_{q=1}^{p-1-\ell} \frac{g^{(q)}\big(X_{i}(t_j)\big)}{q!}
		\big(X_i(t_{j+1})-X_i(t_j)\big)^{q},				\label{Eq : eq18}
	\end{align}
	where we used the substitution $q := r-\ell$ and $g(\cdot) := f^{(\ell)}(\cdot)$. If we choose the function $f$ to satisfy 
	\begin{equation}			\label{Eq : condition of f}
		f^{(p)}(\cdot) = f^{(p+1)}(\cdot) = \cdots \equiv 0,
	\end{equation}
	or, equivalently, $g^{(p-\ell)}(\cdot) = g^{(p-\ell+1)}(\cdot) = \cdots \equiv 0$, then 
	\begin{equation*}
		\sum_{q=1}^{p-1-\ell} \frac{g^{(q)}\big(X_{i}(t_j)\big)}{q!}
		\big(X_i(t_{j+1})-X_i(t_j)\big)^{q}
		=\sum_{q=1}^{p-1} \frac{g^{(q)}\big(X_{i}(t_j)\big)}{q!}
		\big(X_i(t_{j+1})-X_i(t_j)\big)^{q}
	\end{equation*}
	and the sum
	\begin{equation}		\label{Eq : eq19}
		\sum_{\substack{[t_j, t_{j+1}] \in \pi_n \\ t_j \leq t}}
		\sum_{q=1}^{p-1} \frac{g^{(q)}\big(X_{i}(t_j)\big)}{q!}
		\big(X_i(t_{j+1})-X_i(t_j)\big)^{q}
	\end{equation}
	converges to $\int_0^t g'(X_{i}(u))dg_{i}(u)$ in the spirit of \eqref{Eq : compensated Riemann sum}.

	Therefore, using the facts \eqref{Eq : eq17}, \eqref{Eq : eq18}, and \eqref{Eq : eq19}, the limit of \eqref{Eq : nasty cross} is the same as the limit of
	\begin{equation*}
		\sum_{\substack{[t_j, t_{j+1}] \in \pi_n \\ t_j \leq t}}
		\sum_{i=1}^m \sum_{\ell=1}^{p-2}
		\frac{\mathbbm{1}_{\{X_{(k)}(t_j)=X_i(t_j)\}}}{N_{t_j}(k)}
		\frac{f^{(\ell)}\big(X_{i}(t_j)\big)}{\ell!}
		\big(X_{(k)}(t_{j+1})-X_i(t_{j+1})\big)^{\ell},
	\end{equation*}
	and the integration-along-ranks formula \eqref{Eq : ranked formula} becomes
	\begin{align}
		&~~~~~\int_0^t f'\big(X_{(k)}(u)\big)dX_{(k)}(u)			\label{Eq : ranked formula2}
		\\
		&=~~~\sum_{i=1}^m \int_0^t \frac{\mathbbm{1}_{\{X_{(k)}(u)=X_i(u)\}}}{N_{u}(k)} f'\big(X_{i}(u)\big)dX_{i}(u)													\nonumber
		\\
		&~~~+ \sum_{h=k+1}^m \int_0^t \frac{f^{(p-1)}\big(X_{(k)}(u)\big)}{(p-1)!N_u(k)} dL^{(X_{(k)}-X_{(h)})}_u(0)
		-\sum_{h=1}^{k-1} \int_0^t \frac{f^{(p-1)}\big(X_{(k)}(u)\big)}{(p-1)!N_u(k)} dL^{(X_{(h)}-X_{(k)})}_u(0)												\nonumber
		\\
		&~~~+~~\lim_{n \rightarrow \infty} \sum_{\substack{[t_j, t_{j+1}] \in \pi_n \\ t_j \leq t}}
		\sum_{i=1}^m \sum_{\ell=1}^{p-2}
		\frac{\mathbbm{1}_{\{X_{(k)}(t_j)=X_i(t_j)\}}}{N_{t_j}(k)}
		\frac{f^{(\ell)}\big(X_{i}(t_j)\big)}{\ell!}
		\big(X_{(k)}(t_{j+1})-X_i(t_{j+1})\big)^{\ell},		\label{Eq: crossterms limit}
	\end{align}
	provided that the condition \eqref{Eq : condition of f} holds.
	
	\medskip
	
	A standard example satisfying the condition \eqref{Eq : condition of f} is $f(x) := x^{(p-1)}$. The It\^o's formula \eqref{Eq : Ito formula} applied to this specific choice of $f$ gives 
	\begin{equation*}
		X_{(k)}^{(p-1)}(t) - X_{(k)}^{(p-1)}(0) = \int_0^t f'\big(X_{(k)}(u)\big)dX_{(k)}(u),
	\end{equation*}
	where the last integral is the F{\"o}llmer integral in the sense of \eqref{Eq : compensated Riemann sum}, and combining with the formula \eqref{Eq : ranked formula2}, we have the generalization
	\begin{align*}
		&~~~~~X_{(k)}^{(p-1)}(t) - X_{(k)}^{(p-1)}(0)
		\\
		&=~~~\sum_{i=1}^m \int_0^t \frac{\mathbbm{1}_{\{X_{(k)}(u)=X_i(u)\}}}{N_{u}(k)} f'\big(X_{i}(u)\big)dX_{i}(u)
		\\
		&~~~+ \sum_{h=k+1}^m \int_0^t \frac{(p-1)!}{N_u(k)} dL^{(X_{(k)}-X_{(h)})}_u(0)
		-\sum_{h=1}^{k-1} \int_0^t \frac{(p-1)!}{N_u(k)} dL^{(X_{(h)}-X_{(k)})}_u(0)
		\\
		&~~~+~~\lim_{n \rightarrow \infty} \sum_{\substack{[t_j, t_{j+1}] \in \pi_n \\ t_j \leq t}}
		\sum_{i=1}^m \sum_{\ell=1}^{p-2}
		\frac{\mathbbm{1}_{\{X_{(k)}(t_j)=X_i(t_j)\}}}{N_{t_j}(k)}
		\binom{p-1}{\ell}\big(X_{i}(t_j)\big)^{p-1-\ell}
		\big(X_{(k)}(t_{j+1})-X_i(t_{j+1})\big)^{\ell}
	\end{align*}
	of the integral representation \eqref{Eq : representation for p=2} for the ranked paths in descending order.
\end{rem}

\bigskip

\bigskip

\section{Local time from the occupation density formula}
\label{sec: 6}

The definition of continuous pathwise local time $L^{p, c}$ of order $p$ in Definition~\ref{Def : local time2} was quite strong, in the sense that we could easily derive with its help the pathwise It\^o-Tanaka formula as in Theorem~\ref{Thm : tanaka formula2} for functions $f \in C^{p-2}(\mathbb{R}, \mathbb{R})$, less smooth than those that we could handle in Theorem~\ref{Thm : tanaka formula}. Then, all the results in Section~\ref{sec: 4} and Section~\ref{sec: 5} were established using this Theorem~\ref{Thm : tanaka formula2} and its Corollary~\ref{Cor : Tanaka-Meyer} as the starting point. 

\bigskip

In this section, we present another definition of continuous local time of order $p$, in the spirit of Paul L\'evy's classical notion of local time for Brownian Motion. This definition applies to a continuous function $S \in C([0, T], \mathbb{R})$ which admits finite $p$-th variation, and uses the occupation density formula \eqref{Eq : occupation density3} as its starting point. We then give a new definition of the F\"ollmer integral corresponding to this new notion of local time, which enables us to prove It\^o-Tanaka formula of Theorem~\ref{Thm : tanaka formula2}.

\medskip

\begin{defn}[Continuous local time of order $p$ as the density of occupation measure]
	\label{Def : local time3}
	Let $p \in \mathbb{N}$ be an even integer, $S$ be a continuous function defined on the finite time interval $[0, T]$, and denote the minimum and the maximum of $S$ in $[0, T]$ by $m := \min_{0 \leq s \leq T}S_u$ and $M := \max_{0 \leq s \leq T}S_u$, as in \eqref{Def : mM}. We say that $S$ in the collection of $V_p(\pi)$, in the sense of Definition~\ref{def: p-th variation}, has a continuous local time of order $p$ along the given sequence of partitions $\pi = (\pi_n)_{n \in \mathbb{N}}$, if there exists a jointly continuous mapping $[0, T] \times [m, M] \ni (t, x) \mapsto \mathcal{L}_t^{p, c}(x) \in [0, \infty)$ satisfying
	\begin{equation}	\label{def : odf}
		p \int_A \mathcal{L}_t^{p, c}(x)dx = \int_0^t \mathbbm{1}_A \big(S(u)\big) d[S]^p(u),
	\end{equation}
	for each $t \in [0, T]$ and Borel set $A$. We write $\mathscr{L}_p^c(\pi)$ for the collection of all functions $S$ in $C([0, T], \mathbb{R})$ having this notion of continuous local time of order $p$ along this sequence of partitions $\pi = (\pi_n)_{n \in \mathbb{N}}$.
	
	\smallskip
	
	Here, we used the different calligraphic letters $\mathcal{L}^{p, c}$ and $\mathscr{L}_p^c$ to denote this new local time and the space of functions admitting this local time, respectively, in order to distinguish them from the symbols $L^{p, c}$ and $\pazocal{L}_p^c$ used previously in Definition~\ref{Def : local time2}.
\end{defn}

\bigskip

This new notion of local time $\mathcal{L}^{p, c}$, as posited in \eqref{def : odf} of Definition~\ref{Def : local time3}, is actually weaker than the definition of $L^{p, c}$ in Defintion~\ref{Def : local time2}. If a continuous function $S$ in $V_p(\pi)$ admits the continuous local time $L^{p, c}$ of Definition~\ref{Def : local time2}, then from \eqref{Eq : occupation density3} in Remark~\ref{rem : occupation density formula2} the two notions of local time coincide. However, the existence of the local time $\mathcal{L}^{p, c}$ for $S$, as in Definition~\ref{Def : local time3}, does not guarantee that $S$ also admits the local time $L^{p, c}$ of Definition~\ref{Def : local time2} in general, as the following remark explains.

\bigskip

\begin{rem} [Continuous local time $\mathcal{L}^{p, c}$ as a weak limit of discrete local times]
	\label{rem : weak limit}
	For any locally integrable function $f$, let $F$ be a $p$-th anti-derivative of $f$, i.e., $F^{(p)} = f$. Here, we can assume without any loss of generality that $f$ has compact support, because the mapping $[0, T] \ni t \mapsto S(t)$ can only take values inside the interval $[m_T, M_T]$ as in \eqref{Def : mM}. By the change of variable formula in Theorem~\ref{Thm : Ito formula}, and by approximating $f$ with a sequence of Borel-measurable functions in \eqref{def : odf}, we obtain
	\begin{align}
	F\big(S(t)\big) - F\big(S(0)\big) - \int_0^t F'\big(S(u)\big)dS(u) 
	&=\frac{1}{p!}\int_0^t f\big(S(u)\big)d[S]^p(u)					\label{Eq : ItoF}
	\\
	&=\frac{1}{(p-1)!}\int_{\mathbb{R}} \mathcal{L}_t^{p, c}(x)f(x)dx,		\nonumber
	\end{align}
	for each $t \in [0, T]$, provided that $S \in \mathscr{L}_p^c(\pi)$ in Definition~\ref{Def : local time3}. On the other hand, from the equation \eqref{Eq : change of variable}, we have for every member $\pi_n$ of the refining sequence of partitions $\pi$, the identity
	\begin{equation}		\label{Eq : discreteF}
	F\big(S(t)\big) - F\big(S(0)\big) - \sum_{\substack{[t_j, t_{j+1}] \in \pi_n \\ t_j \leq t}} \sum_{k=1}^{p-1} \frac{F^{(k)}(S_{t_j})}{k!} \big(S_{t_{j+1}} - S_{t_j }\big)^k
	= \frac{1}{(p-1)!} \int_\mathbb{R}L_t^{\pi_n; p}(x)f(x)dx,
	\end{equation}
	where $L_t^{\pi_n; p}(x)$ is the discrete local time of order $p$ defined as in \eqref{def : local time2}. Since the last term on the left hand side of \eqref{Eq : discreteF} converges to the F{\"o}llmer integral on the left hand side of \eqref{Eq : ItoF} as $n \rightarrow \infty$, we deduce that the discrete local times $L_t^{\pi_n; p}(\cdot)$, $n \in \mathbb{N}$ converge weakly to the continuous local time $\mathcal{L}_t^{p, c}(\cdot)$, i.e., that
	\begin{equation}		\label{Eq : weak-convergence}
	\lim_{n \rightarrow \infty} \int_\mathbb{R}L_t^{\pi_n; p}(x)f(x)dx = \int_{\mathbb{R}} \mathcal{L}_t^{p, c}(x)f(x)dx,
	\end{equation}
	holds for any locally integrable function $f$ with compact support. In order to deduce the uniform convergence of $L_t^{\pi_n; p}(\cdot)$ to $\mathcal{L}_t^{p, c}(\cdot)$ in the spirit of Definition~\ref{Def : local time2} from \eqref{Eq : weak-convergence} for each fixed $t \in [0, T]$, we need an extra condition on the discrete local times, for example, the uniform boundedness of the mapping $x \mapsto L_t^{\pi_n; p}(x)$ on its support $[m_t, M_t]$ as in \eqref{Def : mM}.
\end{rem}

\bigskip

We show in what follows that almost every path of fractional Brownian Motion~(fBM) admits the local time $\mathcal{L}^{p, c}$ in Definition~\ref{Def : local time3} along a specific sequence of partitions of $[0, T]$.

\bigskip

\subsection{Fractional Brownian Motion}
\label{sec: 6.1}

Consider a fractional Brownian motion $(B^{(H)}_t)_{t \geq 0}$ of Hurst index $H \in (0, 1)$ on a probability space $(\Omega, \mathbb{F}, \mathbb{P})$. For the definition of fBM and its basic properties, see, for example, Chapter~1 of \cite{Biagini_Hu_Oksendal_Zhang}. With the dyadic-rational sequence of partitions $\pi = (\pi_n)_{n \in \mathbb{N}}$ of the form 
\begin{equation}	\label{Eq : dyadic partition}
\pi_n = \{ kT/2^{n} : k \in \mathbb{N}_0 \} \cap [0, T],
\end{equation}
\cite{Rogers:1997} proved that $B^{(H)}$ has finite $p$-th variation with $p = 1/H$, and the $p$-th variation with $p=1/H$ converges in probability
\begin{equation*}
\sum_{\substack{[t_j, t_{j+1}] \in \pi_n \\ t_j \leq t}} \big| B^{(H)}(t_{j+1}) - B^{(H)}(t_j)\big|^p \xrightarrow{\mathbb{~~P~~}} t ~ \mathbb{E}\big[|B^{(H)}_1|^p\big]
\end{equation*}
as $n \rightarrow \infty$. Thus, from now on, we fix the Hurst index $H$ to be the reciprocal of a positive even integer $p$. There exists then a subsequence $\tilde{\pi}$ of $\pi=(\pi_n)_{n \in \mathbb{N}}$ in \eqref{Eq : dyadic partition} such that almost every path of $B^{(H)}$ belongs to  $V_{1/H}(\tilde{\pi})$ and the $p$-th variation along $\tilde{\pi}$ in the sense of Definition~\ref{def: p-th variation} is given as
\begin{equation}		\label{Eq : p-th variation of fBM}
[B^{(H)}]^{p}(t) = t ~ \mathbb{E}\big[|B^{(H)}_1|^p\big].
\end{equation}
On the other hand, \cite{Berman} introduced the local time $\mathcal{L}_t(x)$ of $B^{(H)}$ as the density of the occupation measure $\mathcal{B}(\mathbb{R}) \ni \Gamma \mapsto \int_0^t \mathbbm{1}_{\Gamma}\big(B^{(H)}(s)\big)ds$ and proved that this local time $[0, T] \times \mathbb{R} \ni (t, x) \mapsto \mathcal{L}_t(x)$ has a jointly continuous version. Berman's local time $\mathcal{L}_t(x)$ of fBM $B^{(H)}$ coincides with our pathwise notion, of continuous local time $\mathcal{L}_t^{p, c}(x)$ along the subsequence $\tilde{\pi}$ in the manner of Definition~\ref{Def : local time3}, up to a constant. From Berman's definition of local time $\mathcal{L}_t(x)$, for any Borel set $A$, we have
\begin{equation*}
\int_{A} \mathcal{L}_t(x) dx
= \int_0^t \mathbbm{1}_A\big(B^{(H)}(u)\big)du
= \frac{1}{c_p} \int_0^t \mathbbm{1}_A\big(B^{(H)}(u)\big)d[B^{(H)}]^p(u),
\end{equation*}
where $c_p := \mathbb{E}\big[|B^{(H)}_1|^p\big]$ is from \eqref{Eq : p-th variation of fBM}. In accordance with Definition~\ref{Def : local time3}, we have the relationship
\begin{equation}		\label{Eq : relationship}
	\mathcal{L}^{p, c}_t(x) = \frac{c_p}{p} \mathcal{L}_t(x),
\end{equation}
and the joint continuity of $(t, x) \mapsto \mathcal{L}_t^{p, c}(x)$ follows from that of $(t, x) \mapsto \mathcal{L}_t(x)$. Therefore, almost every path of the fBM $B^{(H)}$ admits a continuous local time of order $p$ along the specific subsequence $\tilde{\pi}$ of the sequence $\pi$ of dyadic-rational partitions, in the sense of Definition~\ref{Def : local time3}.

\smallskip

The relationship \eqref{Eq : relationship} was conjectured in \cite{Cont_Perkowski} for the pathwise $\mathbb{L}^q$-local time in the sense of Definition~\ref{Def : local time}, along the dyadic \textit{Lebesgue} partition generated by the fBM $B^{(H)}$, using an `upcrossing representation' of local time. However, appealing to the occupation density formula \eqref{def : odf}, as we just did right above, establishes the equality \eqref{Eq : relationship} in a simpler manner.

\bigskip

\begin{rem}
	\cite{COUTIN} introduced another definition of local time $\ell_t(x)$ for the fBM $B^{(H)}$, in order to establish the Tanaka-Meyer formula. This local time $\ell_t(x)$ is defined as the density of the occupation measure $\Gamma \rightarrow 2H\int_0^t \mathbbm{1}_{\Gamma}\big(B^{(H)}(s)\big)s^{2H-1}ds$ and satisfies the following Tanaka-Meyer formula for $H > 1/3$:
	\begin{equation*}
	|B^{(H)}_t-a| = |a| + \int_0^t \text{sign} \big(B^{(H)}_s-a\big)dB^{(H)}_s + \ell_t(a).
	\end{equation*}
	In the case of the standard Brownian motion (i.e., $B^{(H)}$ with $H = 1/2$), this notion of local time $\ell_t(x)$ coincides with Berman's local time $\mathcal{L}_t(x)$; but in general they are different, and related by
	\begin{equation*}
	\ell_t(x) = 2H\int_0^t s^{2H-1} d\mathcal{L}^s(s).
	\end{equation*}
\end{rem}

\bigskip

\subsection{A new F\"ollmer integral}

Although the Definition~\ref{Def : local time3} seems to give us the most natural definition of local time of order $p$ for fractional Brownian Motion which coincides with L\'evy and Berman's classical notions of local time, we cannot easily establish Theorem~\ref{Thm : tanaka formula2} and Corollary~\ref{Cor : Tanaka-Meyer} using this weaker notion of local time $\mathcal{L}^{p, c}$ in Definition~\ref{Def : local time3}, rather than the previous one $L^{p, c}$ in Definition~\ref{Def : local time2}.
One possible way to establish the It\^o-Tanaka formula of Theorem~\ref{Thm : tanaka formula2} involving the new local time $\mathcal{L}^{p, c}$, is to generalize the definition of the F\"ollmer integral as follows. First, we denote $\phi(x)$ by the standard mollifier:
\begin{equation*}
	\phi(x) := 
	\begin{cases}
		C \exp\Big(\frac{1}{|x|^2-1}\Big) & \text{if } |x|<1, \\
		\qquad \quad 0 & \text{otherwise,}
	\end{cases}
\end{equation*}
where $C$ is a constant satisfying $\int_\mathbb{R} \phi(x)dx = 1$. We also define the mollifiers $\phi_m(x) := m\phi(mx)$ for each $m \in \mathbb{N}$, and the mollification
\begin{equation}	\label{def : mollification}
	f_m(x) := (f \ast \phi_m)(x) = \int_{\mathbb{R}} f(x-y)\phi_m(y) dy
\end{equation}
of a locally integrable function $f$. As in Remark~\ref{rem : weak limit}, let $F$ be a $p$-th anti-derivative of $f$, i.e., $F^{(p)} = f$, and consider the mollification $F_m := (F \ast \phi_m)$ of $F$. Then, $F_m$ and its derivatives $F_m^{(k)}$ converge pointwise to $F$ and the corresponding derivatives of $F$, respectively, as $m$ goes to infinity:
\begin{equation*}
	F_m \rightarrow F, \qquad F_m^{(k)} \rightarrow F^{(k)}, \quad \text{for } k = 1, \cdots, p \quad \text{as} \quad m \rightarrow \infty.
\end{equation*}
By the change of variable formula in Theorem~\ref{Thm : Ito formula} applied to $F_m$ as in \eqref{Eq : ItoF}, we have
\begin{equation}	\label{Eq : ItoFn}
	F_m\big(S(t)\big) - F_m\big(S(0)\big) - \frac{1}{(p-1)!}\int_{\mathbb{R}} \mathcal{L}_t^{p, c}(x)F_m^{(p)}(x)dx
	= \int_0^t F_m'\big(S(u)\big)dS(u),
\end{equation}
where the right-hand side represents the F\"ollmer integral defined as the limit of compensated Riemann sum in \eqref{Eq : compensated Riemann sum}. If we take the limit on both sides of \eqref{Eq : ItoFn} as $m \rightarrow \infty$, we have
\begin{equation}	\label{Eq : ItoFFn}
	F\big(S(t)\big) - F\big(S(0)\big) - \frac{1}{(p-1)!}\int_{\mathbb{R}} \mathcal{L}_t^{p, c}(x)F^{(p)}(x)dx
	= \lim_{m \rightarrow \infty} \int_0^t F_m'\big(S(u)\big)dS(u),
\end{equation}
as long as the integral on the left-hand side of \eqref{Eq : ItoFn} converges to the corresponding integral in \eqref{Eq : ItoFFn}. Because each term on the left-hand side of \eqref{Eq : ItoFn} converges to the corresponding term in \eqref{Eq : ItoFFn} and does not depend on the sequence $(F_m)_{m \in \mathbb{N}}$ of approximating functions, the limit on the right-hand side of \eqref{Eq : ItoFFn} should also converge to the same quantity, regardless of the choice of functions $F_m$ approximating $F$. This gives rise to the following new definition. 

\smallskip

\begin{defn} [Modified F\"ollmer integral]	\label{Def : Follmer integral2}
	Fix a nested sequence of partitions $\pi = (\pi_n)_{n \in \mathbb{N}}$ of $[0, T]$, an even integer $p \in \mathbb{N}$, and a continuous function $S$ in $\mathscr{L}_p^c(\pi)$ as in Definition~\ref{Def : local time3}. For a given function $f$, assume that there exists a sequence of functions $f_m \in C^{\infty}$ for $m \in \mathbb{N}$ such that $f_m$ converges pointwise to $f$ as $m \rightarrow \infty$, and the limit,
	\begin{equation}	\label{Def : I}
		\lim_{m \rightarrow \infty} \int_{\mathbb{R}} \mathcal{L}_t^{p, c}(x)f_m^{(p)}(x)dx =: I
	\end{equation}
	exists. Then, the double limit
	\begin{equation}	\label{Def : Follmer integral}
		\lim_{m \rightarrow \infty} \lim_{n \rightarrow \infty} \sum_{\substack{[t_j, t_{j+1}] \in \pi_n \\ t_j \leq t}} \sum_{k=1}^{p-1}\frac{f_m^{(k)}\big(S(t_j)\big)}{k!}\big(S(t_{j+1})-S(t_j)\big)^k
	\end{equation}
	exists, and is equal to $f\big(S(t)\big) - f\big(S(0)\big) - \frac{I}{(p-1)!}$ from \eqref{Eq : ItoFn}. If the limit $I$ in \eqref{Def : I} has the same value regardless of the choice of the sequence $(f_m)_{m \in \mathbb{N}}$ for given fixed $f$, as in the above argument containing \eqref{Eq : ItoFFn}, we denote the double limit by $\int_0^t f'\big(S(u)\big)dS(u)$ and call it \textit{the modified F\"ollmer integral of order $p$ of the function $f$ for the path $S$ along $\pi$}. This new integral can be represented as
	\begin{equation*}
		\int_0^t f'\big(S(u)\big)dS(u) = \lim_{m \rightarrow \infty} \int_0^t f_m'\big(S(u)\big)dS(u).
	\end{equation*}
	Here, the last integral is the original F\"ollmer integral defined as the limit of compensated Riemann sums, as in \eqref{Eq : compensated Riemann sum} of Theorem~\ref{Thm : Ito formula}.
\end{defn}

\smallskip

With this new definition of the F\"ollmer integral, we present Theorem~\ref{Thm : tanaka formula2} for the local time $\mathcal{L}^{p, c}$ of Definition~\ref{Def : local time3}.

\smallskip

\begin{thm} [``It{\^ o}-Tanaka'' formula for paths with continuous local times, revisited]
	\label{Thm : tanaka formula3}
	Let $p \in \mathbb{N}$ be an even integer. Let $f \in C^{p-2}(\mathbb{R}, \mathbb{R})$ be a function with absolutely continuous derivative $f^{(p-2)}$, and assume that the weak derivative $f^{(p-1)}$ of this latter function is right-continuous and of bounded variation.
	
	Then for any function $S \in C([0, T], \mathbb{R})$ in the collection $\mathscr{L}_p^c(\pi)$ of Definition~\ref{Def : local time3}, we have the following change of variable formula:
	\begin{equation} \label{Eq : tanaka formula3}
	f\big(S(t)\big) - f\big(S(0)\big) = \int_0^t f'\big(S(u)\big)dS(u) + \frac{1}{(p-1)!}\int_{\mathbb{R}} \mathcal{L}_t^{p, c}(x) df^{(p-1)}(x).
	\end{equation}
	The first integral on the right-hand side is defined as in \eqref{Def : Follmer integral}.
\end{thm}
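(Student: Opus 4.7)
The plan is to mollify $f$, apply the original change of variable formula of Theorem~\ref{Thm : Ito formula} to the smoothed versions, and then pass to the limit with the help of the occupation density identity \eqref{def : odf} characterizing $\mathcal{L}^{p, c}$ in Definition~\ref{Def : local time3}.

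Concretely, let $\phi_m$ be the standard mollifier introduced just before Definition~\ref{Def : Follmer integral2}, and set $f_m := f \ast \phi_m \in C^{\infty}(\mathbb{R}, \mathbb{R})$. Since $S \in \mathscr{L}_p^c(\pi) \subset V_p(\pi)$ and each $f_m$ belongs to $C^p(\mathbb{R}, \mathbb{R})$, Theorem~\ref{Thm : Ito formula} applied to $f_m$ yields
\begin{equation*}
	f_m\big(S(t)\big) - f_m\big(S(0)\big) = \int_0^t f_m'\big(S(u)\big) dS(u) + \frac{1}{p!}\int_0^t f_m^{(p)}\big(S(u)\big) d[S]^p(u).
\end{equation*}
Extending the identity \eqref{def : odf} from indicators to bounded continuous functions by a monotone-class argument on the compact interval $[m_t, M_t]$ of \eqref{Def : mM}, I would rewrite the last term as $\frac{1}{(p-1)!}\int_{\mathbb{R}} \mathcal{L}_t^{p, c}(x) f_m^{(p)}(x) dx$ and obtain
\begin{equation*}
	\int_0^t f_m'\big(S(u)\big) dS(u) = f_m\big(S(t)\big) - f_m\big(S(0)\big) - \frac{1}{(p-1)!}\int_{\mathbb{R}} \mathcal{L}_t^{p, c}(x) f_m^{(p)}(x) dx.
\end{equation*}

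Next I would send $m \to \infty$. Pointwise convergence of $f_m$ to the continuous function $f$ handles the first two terms on the right-hand side. For the $\mathcal{L}^{p, c}$-integral, the right-continuity and bounded variation of $f^{(p-1)}$ make $df^{(p-1)}$ a finite signed Borel measure, and $f_m^{(p)}(x) dx = (\phi_m \ast df^{(p-1)})(x) \, dx$ is a smooth approximation of $df^{(p-1)}$ in the weak-star sense against continuous compactly supported test functions. Since $\mathcal{L}_t^{p, c}$ is jointly continuous with compact support contained in $[m_t, M_t]$ (as seen by plugging $A = [m_t, M_t]^c$ into \eqref{def : odf}), we obtain
\begin{equation*}
	\int_{\mathbb{R}} \mathcal{L}_t^{p, c}(x) f_m^{(p)}(x) dx \xrightarrow{m \to \infty} \int_{\mathbb{R}} \mathcal{L}_t^{p, c}(x) df^{(p-1)}(x).
\end{equation*}
Thus $\int_0^t f_m'(S(u)) dS(u)$ converges to $f(S(t)) - f(S(0)) - \frac{1}{(p-1)!}\int_{\mathbb{R}} \mathcal{L}_t^{p, c}(x) df^{(p-1)}(x)$, and by Definition~\ref{Def : Follmer integral2} this limit is precisely the modified F\"ollmer integral $\int_0^t f'(S(u)) dS(u)$; rearrangement then yields \eqref{Eq : tanaka formula3}.

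The main obstacle is verifying that the limit in \eqref{Def : I} is independent of the approximating sequence $(f_m)_{m \in \mathbb{N}}$, as required for Definition~\ref{Def : Follmer integral2} to apply to our $f$. The three-term identity displayed above reduces this to the weak-star convergence of $f_m^{(p)}(x) dx$ to $df^{(p-1)}$ tested against continuous compactly supported functions such as $\mathcal{L}_t^{p, c}$, a property shared by any reasonable smoothing of a right-continuous BV function; consequently every admissible sequence $(f_m)$ yields the common value $I = \int_{\mathbb{R}} \mathcal{L}_t^{p, c}(x) df^{(p-1)}(x)$, and the modified F\"ollmer integral is well-defined for the $f$ considered here.
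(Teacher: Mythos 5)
Your argument is correct and follows essentially the same route as the paper's proof: mollify $f$ to $f_m$, apply Theorem~\ref{Thm : Ito formula} to $f_m$, rewrite the $d[S]^p$-integral via the occupation density identity \eqref{def : odf}, and pass to the limit in $m$. The only cosmetic difference is in justifying the convergence $\int \mathcal{L}_t^{p,c} f_m^{(p)}\,dx \to \int \mathcal{L}_t^{p,c}\,df^{(p-1)}$: the paper integrates by parts against $C^1$ test functions and then approximates $\mathcal{L}_t^{p,c}$ uniformly by $C^1$ functions, while you invoke directly the weak-star convergence of the mollified measures $(\phi_m \ast df^{(p-1)})\,dx$ to $df^{(p-1)}$ against continuous compactly supported test functions; these two justifications are standard and interchangeable, so your proposal is a correct proof by essentially the paper's method.
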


\smallskip

\begin{proof}
	We first assume without loss of generality that $f$ has compact support $[m_T, M_T]$ as in \eqref{Def : mM}. We consider the mollification $f_m$ of $f$ as in \eqref{def : mollification}, then $f_m$ and its $(p-1)$th derivative $f_m^{(p-1)}$ converge pointwise to $f$ and $f^{(p-1)}$, respectively, on the compact support of $f$ and these functions are uniformly bounded on the compact support. We apply Theorem~\ref{Thm : Ito formula} to each $f_m \in C^{\infty}$ as in \eqref{Eq : ItoF} to obtain the equation
	\begin{equation}	\label{Eq : Itofm}
		f_m\big(S(t)\big) - f_m\big(S(0)\big) - \frac{1}{(p-1)!}\int_{\mathbb{R}} \mathcal{L}_t^{p, c}(x)f_m^{(p)}(x)dx
		= \int_0^t f_m'\big(S(u)\big)dS(u),
	\end{equation}
	where the right-hand side represents the F\"ollmer integral defined as the limit of compensated Riemann sum in \eqref{Eq : compensated Riemann sum}.
	
	For any function $g \in C^1$ with compact support, the integration by parts formula gives
	\begin{align*}
		\lim_{m \rightarrow \infty} \int_{\mathbb{R}} g(x) f_m^{(p)}(x)dx
		&=-\lim_{m \rightarrow \infty} \int_{\mathbb{R}} g'(x) f_m^{(p-1)}(x)dx
		\\
		&=-\int_{\mathbb{R}} g'(x) f^{(p-1)}(x)dx
		=\int_{\mathbb{R}} g(x) df^{(p-1)}(x).
	\end{align*}
	Because the continuous function $\mathcal{L}^{p, c}(\cdot)$ with compact support can be uniformly approximated by functions in $C^1$, we also have
	\begin{equation*}
		\lim_{m \rightarrow \infty} \int_{\mathbb{R}} \mathcal{L}_t^{p, c}(x)f_m^{(p)}(x)dx
		= \int_{\mathbb{R}} \mathcal{L}_t^{p, c}(x) df^{(p-1)}(x).
	\end{equation*}
	We take $m \rightarrow \infty$ to the both sides of \eqref{Eq : Itofm}, then we have
	\begin{equation*}
		f\big(S(t)\big) - f\big(S(0)\big) - \frac{1}{(p-1)!}\int_{\mathbb{R}} \mathcal{L}_t^{p, c}(x)df^{(p-1)}(x)
		= \lim_{m \rightarrow \infty} \int_0^t f_m'\big(S(u)\big)dS(u).
	\end{equation*}
	The left-hand side does not depend on $f_m$, so we can write the right-hand side as $\int_0^t f'\big(S(u)\big)dS(u)$, representing the double limit of \eqref{Def : Follmer integral} from Definition~\ref{Def : Follmer integral2}, and the result follows.
\end{proof}

\bigskip

We conclude here by summarizing and comparing the two different notions of local times, $L^{p, c}$ of Definition~\ref{Def : local time2} and $\mathcal{L}^{p, c}$ of Definition~\ref{Def : local time3}. The local time $L^{p, c}$ is defined as the uniform limit of the discrete local times \eqref{def : local time2}, and these discrete local times naturally arise in the equation \eqref{Eq : change of variable} which we could obtain by applying Taylor Expansion to each path $S$ of finite $p$-th variation. This definition clearly exhibits the `pathwise characteristic' of the local time. On the other hand, the local time $\mathcal{L}^{p, c}$, defined from the occupation density formula \eqref{def : odf}, is closer to the original definition of local time of Brownian motion or fractional Brownian motion. Since $\mathcal{L}^{p, c}$ is weaker than $L^{p, c}$, defined as it is in terms of the convergence of discrete local times as in Remark~\ref{rem : weak limit}, it requires the more general Definition~\ref{Def : Follmer integral2} of the F\"ollmer integral, in order to establish the generalized It\^o-Tanaka formula of Theorem~\ref{Thm : tanaka formula3}. 

\bigskip

\bigskip

\bigskip

\subsection*{Acknowledgment}
The author would like to thank I.Karatzas for suggesting this topic and for numerous discussions regarding the material in this paper, R.Cont and N.Perkowski for helpful comments and for correcting errors in the earlier version of this paper.

\newpage

\bibliography{aa_bib}
\bibliographystyle{apalike}

\end{document}